\newcommand{\Z}{{\mathbb{Z}}}
\newcommand{\Q}{{\mathbb{Q}}}
\newcommand{\R}{{\mathbb{R}}}
\newcommand{\C}{{\mathbb{C}}}
\renewcommand{\S}{{\mathbb{S}}}
\newcommand{\rk}{{\operatorname{rank}\ }}
\newcommand{\img}{{\operatorname{im}\ }}
\newtheorem{thm}{Theorem} 
\newtheorem{prop}[thm]{Proposition}
\newtheorem{lemma}[thm]{Lemma}
\newtheorem*{propNull}{Proposition}
\begin{document}

\title{Singularities and stable homotopy groups of spheres II.}

\subjclass[2000]{primary 57R45, secondary 57R90, 55P42, 55T25}
\keywords{Global singularity theory, cobordisms of singular maps, stable homotopy theory}

\author{Andr\'as Sz\H ucs}
\address{ELTE Analysis Department, 1117 Budapest, P\'azm\'any P\'eter s\'et\'any 1/C, Hungary}
\email{szucs@math.elte.hu}

\author{Tam\'as Terpai}
\address{Alfr\'ed R\'enyi Institute of Mathematics, 1053 Budapest, Re\'altanoda u. 13-15., Hungary}
\email{terpai@math.elte.hu}



\maketitle

\renewcommand{\thefootnote}{}
\footnotetext{{\it Acknowledgement:} The authors were supported by the National Research, Development and Innovation Office NKFIH (OTKA) Grants NK 112735 and K 120697 and partially supported by ERC Advanced Grant LDTBud.}

\section{Introduction and motivation}

Understanding generic smooth maps includes the following ingredients:

\begin{enumerate}[a)]
\item Local forms: they describe the map in neighbourhoods of points (see Whitney \cite{Whitney}, Mather \cite{Mather}, Arnold \cite{Arnold1}, \cite{Arnold2} and others).
\item Automorphism groups of local forms: they describe the maps in neighbourhoods of singularity strata (a stratum is a set of points with equal local forms) (see J\"anich \cite{Janich}, Wall \cite{Wall}, Rim\'anyi \cite{RRPhD}, Sz\H ucs \cite{SzucsLNM}, Rim\'anyi-Sz\H ucs \cite{RSz}).
\item Clutching maps of the strata: they describe how simpler strata are incident to more complicated ones.
\end{enumerate}

The present paper is devoted to a systematic investigation of ingredient $c)$ (for some special class of singular maps), initiated in the first part of this paper, \cite{partI}. While the ingredients $a)$ and $b)$ were well studied, there were hardly any results concerning $c)$.

The clutchings of strata will be described by some classes of the stable homotopy groups of spheres, $G = \underset{n \geq 0}{\oplus} \pi^{\bf s}(n)$. We shall associate an element of $G$ to a singular map (more precisely, to its highest singularity stratum) that will be trivial precisely when the second most complicated singularity stratum can be smoothed in some sense around the most complicated one. If this is the case then considering the next stratum (the third most complicated) we define again a (higher dimensional) element of $G$, and  this will vanish precisely when this (third by complexity) stratum can be smoothed around the
highest one. And so on.

The aim of the paper is computing these classes explicitly. The actual computation is taken from Mosher's paper \cite{Mosher}, which is purely homotopy theoretic, and is not dealing with any singularity or any smooth maps. Mosher computes the spectral sequence arising from the filtration of $\C P^{\infty}$  by the spaces $\C P^n$ in the (extraordinary) homology theory formed by the stable homotopy groups. We show that in a special case of singular maps the classifying spaces of such singular maps can be obtained from the complex projective spaces by some homotopy theoretical manipulation. This allows us to identify the clutching classes describing the incidence structure of the singularity strata with the differentials of Mosher's spectral sequence.

\section{Spectral sequences}\label{section:SS}

\subsection{The Mosher spectral sequence}\label{section:CPSS}

Let us consider the following spectral sequence: $\C P^\infty$ is filtered by the subspaces $\C P^n$. Consider the extraordinary homology theory formed by the stable homotopy groups $\pi^{\bf S}_*$. The filtration $\C P^0 \subset \C P^1 \subset \dots$ generates a spectral sequence in the theory $\pi^{\bf S}_*$. The starting page of this spectral sequence is
$$
E^{1}_{p,q}=\pi^{\bf S}_{p+q}(\S^{2p}) = \pi^{\bf S}(q-p),
$$
containing the stable homotopy groups of spheres. This spectral sequence was investigated by Mosher \cite{Mosher}. Our first result is that (rather surprisingly) this spectral sequence coincides with another one that arises from singularity theory; we describe it now.

\subsection{The singularity spectral sequence}\label{section:singSS} (\cite{nagycikk}) Let $X^0 \subset X^1 \subset X^2 \subset \dots \subset X$ be a filtration such that for any $i$ there is a fibration $X^i \to B_i$ with fibre $X^{i-1}$. Then there is a spectral sequence with $E^1$ page given by $E^1_{p,q}=\pi_{p+q}(B_p)$ that abuts to $\pi_*(X)$. (Indeed, in the usual construction of a spectral sequence in the homotopy groups one has $E^1_{p,q}=\pi_{p+q}(X^p,X^{p-1})$ and in the present case this group can be replaced by $\pi_{p+q}(B_p)$.)

\par

Such an iterated fibration arises in singularity theory in the following way. Let $k$ be a fixed positive integer and consider all the germs of codimension $k$ maps, i.e. all germs $f:(\R^c,0) \to (\R^{c+k},0)$ where $c$ is a non-negative integer. Two such germs will be considered to be equivalent if
\begin{itemize}
\item they are $\mathcal A$-equivalent (that is, one of the germs can be obtained from the other by composing and precomposing it with germs of diffeomorphisms), or
\item one of the germs is equivalent to the trivial unfolding (or suspension) of the other, that is, $f$ is equivalent to $f \times id_\R : (\R^c \times \R^1,0) \to (\R^{c+k} \times \R^1)$. 
\end{itemize}
An equivalence class will be called a \emph{singularity} or a \emph{singularity type} (note that germs of maximal rank, with $\rk df =c$, form an equivalence class and this class is also a ``singularity''). For any codimension $k$ smooth map of manifolds $f: M^n \to P^{n+k}$, any point $x\in M$ and any singularity $\eta$ we say that $x$ is an \emph{$\eta$-point} if the germ of $f$ at $x$ belongs to $\eta$. There is a natural partial order on the singularities: given two singularities $\eta$ and $\zeta$ we say that $\eta < \zeta$ if in any neighbourhood of a $\zeta$-point there is an $\eta$-point.

\par

Now let $\tau$ be a sequence of singularities $\eta_0$, $\eta_1$, $\eta_2$, $\dots$ such that for all $i$ in any sufficiently small neighbourhood of an $\eta_i$ point there can only be points of types $\eta_0$, $\dots$, $\eta_{i-1}$. We say that a smooth map $f: M^n \to P^{n+k}$ is a \emph{$\tau$-map} if at any point $x\in M$ the germ of $f$ at $x$ belongs to $\tau$. One can define the cobordism group of $\tau$-maps of $n$-manifolds into $\R^{n+k}$ (with the cobordism being a $\tau$-map of an $(n+1)$-manifold with boundary into $\R^{n+k} \times [0,1]$); this group is denoted by $Cob_\tau (n)$. In \cite{nagycikk} it was shown that there is a classifying space $X_\tau$ such that
$$
Cob_\tau(n) \cong \pi_{n+k}(X_\tau).
$$
Let $\tau_i$ denote the set $\tau_i = \left\{ \eta_0, \dots, \eta_i \right\}$ and denote by $X^i$ the classifying space $X_{\tau_i}$. It was shown in \cite{nagycikk} that $X^0 \subset X^1 \subset \dots$ is an iterated fibration, that is, there is a fibration $X^i \to B_i$ with fibre $X^{i-1}$. The base spaces $B_i$ have the following description (given in \cite{nagycikk}): to the singularity $\eta_i$
one can associate two vector bundles, the universal normal bundle $\xi_i$ of the stratum formed by $\eta_i$-points in the source manifold and the universal normal bundle $\tilde \xi_i$ of the image of this stratum in the target. Let $T\tilde \xi_i$ be the Thom space of the bundle $\tilde \xi_i$ and let $\Gamma T\tilde \xi_i$ be the space $\Omega^\infty S^\infty T\tilde\xi_i = \underset{q \to \infty}{\lim} \Omega^q S^q T\xi_i$. Then $B_i = \Gamma T\tilde \xi_i$. The obtained fibration $X_{i-1} \hookrightarrow X_i \to B_i$ is called the \emph{key fibration} of $\tau_i$-cobordisms (see \cite[Definition 109]{nagycikk}).

\par

Let us recall shortly the construction of the bundles $\xi_i$ and $\tilde\xi_i$. Let $\eta_i^{root} : (\R^{c_i},0) \to (\R^{c_i+k},0)$ be the root of the singularity $\eta_i$, i.e. a germ with an isolated $\eta_i$ point at the origin. Let $Aut_{\eta_i^{root}} < Diff(\R^{c_i},0) \times Diff(\R^{c_i+k},0)$ be the automorphism group of this germ, that is, the set of pairs $(\phi,\psi)$ of diffeomorphism germs of $(\R^{c_i},0)$ and $(\R^{c_i+k},0)$, respectively, such that $\eta_i^{root} = \phi \circ \eta_i^{root} \circ \psi^{-1}$. J\"anich \cite{Janich} and Wall \cite{Wall} showed that a maximal compact subgroup of this automorphism  group can be defined (and is unique up to conjugacy). Let $G_i$ denote this subgroup. It acts naturally on $(\R^{c_i},0)$ and on $(\R^{c_i+k},0)$ and these actions can be chosen to be linear (even orthogonal). We denote by $\lambda_i$ and $\tilde\lambda_i$ the corresponding representations of $G_i$ in $GL(c_i)$ and $GL(c_i+k)$ respectively. Now $\xi_i$ and $\tilde \xi_i$ are the vector bundles associated to the universal $G_i$-bundle via the representations $\lambda_i$ and $\tilde \lambda_i$, i.e. $\xi_i = EG_i \underset{\lambda_i}{\times} \R^{c_i}$ and $\tilde\xi_i = EG_i \underset{\tilde\lambda_i}{\times} \R^{c_i+k}$.

\par

Recall also that the space $\Gamma T\tilde \xi_i$ is the classifying space of immersions equipped with a pullback of their normal bundle from $\tilde\xi_i$ (such immersions will be called \emph{$\tilde\xi_i$-immersions}). That is, if we denote by $Imm^{\tilde\xi_i}(m)$ the cobordism group of immersions of $m$-manifolds into $\R^{m+c_i+k}$ with the normal bundle induced from $\tilde\xi_i$, the following well-known proposition holds:

\begin{prop}\label{prop:immGamma} \textnormal{(\cite{Wells}, \cite{EcclesGrant})}
$$Imm^{\tilde\xi_i}(m) \cong \pi_{m+c_i+k}(\Gamma T\tilde\xi_i) \cong \pi^{\bf S}_{m+c_i+k}(T\tilde\xi_i).$$
\end{prop}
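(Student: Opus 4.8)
The plan is to treat the two isomorphisms separately. The right-hand one, $\pi_{m+c_i+k}(\Gamma T\tilde\xi_i)\cong\pi^{\bf S}_{m+c_i+k}(T\tilde\xi_i)$, is formal. By definition $\Gamma T\tilde\xi_i=\Omega^\infty S^\infty T\tilde\xi_i=\underset{q\to\infty}{\lim}\Omega^q S^q T\tilde\xi_i$, the structure maps being the inclusions $\Omega^q S^q T\tilde\xi_i\hookrightarrow\Omega^{q+1}S^{q+1}T\tilde\xi_i$; these are cofibrations, so the colimit is a homotopy colimit and $\pi_j$ commutes with it, which together with the loop--suspension adjunction gives $\pi_j(\Gamma T\tilde\xi_i)=\underset{q}{\lim}\pi_j(\Omega^q S^q T\tilde\xi_i)=\underset{q}{\lim}\pi_{j+q}(S^q T\tilde\xi_i)=\pi^{\bf S}_j(T\tilde\xi_i)$, and $j=m+c_i+k$ is the claim. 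The substantive statement is the left-hand isomorphism, the Pontryagin--Thom theorem for immersions; write $n=c_i+k$ for the rank of $\tilde\xi_i$, and recall $\pi^{\bf S}_{m+n}(T\tilde\xi_i)=\underset{N}{\lim}\,\pi_{m+n+N}(S^N T\tilde\xi_i)$. I would build mutually inverse homomorphisms between $Imm^{\tilde\xi_i}(m)$ and this colimit.

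To go from a $\tilde\xi_i$-immersion $g\colon V^m\looparrowright\R^{m+n}$ (an immersion together with a bundle map $\nu_g\to\tilde\xi_i$ of its normal bundle) to a homotopy class, pick $N$ large and a small generic perturbation $V\hookrightarrow\R^{m+n+N}$ of the composite $V\overset{g}{\looparrowright}\R^{m+n}\hookrightarrow\R^{m+n+N}$; being a regular homotopy, the perturbation leaves the normal bundle unchanged, so it is $\nu_g\oplus\varepsilon^N$. Collapsing $S^{m+n+N}$ onto an embedded tubular neighbourhood of $V$ gives $S^{m+n+N}\to T(\nu_g\oplus\varepsilon^N)=S^N T\nu_g$, and postcomposing with $S^N$ of the Thomification $T\nu_g\to T\tilde\xi_i$ of the bundle map yields a class in $\pi_{m+n+N}(S^N T\tilde\xi_i)$, hence in $\pi^{\bf S}_{m+n}(T\tilde\xi_i)$. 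One checks this is independent of $N$ (stabilising merely suspends the collapse map), of the perturbation (isotopic embeddings give homotopic collapse maps), and of the bundle map within its homotopy class, and that a cobordism of $\tilde\xi_i$-immersions, treated the same way inside $\R^{m+n+N}\times[0,1]$, induces a homotopy; this produces a well-defined homomorphism $Imm^{\tilde\xi_i}(m)\to\pi^{\bf S}_{m+n}(T\tilde\xi_i)$.

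For the inverse, represent a class by $\Phi\colon S^{m+n+N}\to S^N T\tilde\xi_i=T(\tilde\xi_i\oplus\varepsilon^N)$; since $\Phi$ has finite-dimensional image we may replace $B_i$ by a finite-dimensional approximation, make $\Phi$ smooth, and put it transverse to the zero section $B_i\subset T(\tilde\xi_i\oplus\varepsilon^N)$. Then $V=\Phi^{-1}(B_i)$ is a closed $m$-manifold lying away from a point of $S^{m+n+N}$, hence in $\R^{m+n+N}$, with normal bundle a pullback of $\tilde\xi_i\oplus\varepsilon^N$; the $\varepsilon^N$-summand furnishes $N$ pointwise linearly independent normal vector fields in the last $N$ coordinate directions. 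By the Compression Theorem of Rourke--Sanderson, $V$ is isotopic in $\R^{m+n+N}$ to a submanifold on which the projection $\R^{m+n+N}\to\R^{m+n}$ restricts to an immersion $g\colon V\looparrowright\R^{m+n}$; its normal bundle is then the complementary summand, a pullback of $\tilde\xi_i$, so $g$ is a $\tilde\xi_i$-immersion. The cobordism (parametrised) version of compression shows this descends to cobordism classes, and a routine diagram chase shows the two constructions are mutually inverse.

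The hard part --- as opposed to Pontryagin--Thom bookkeeping --- is precisely this passage between codimension $n$ and codimension $n+N$: in one direction the immersion data must be absorbed into an embedding in a larger space with control over the stabilisation, and in the other a genuine immersion in the original codimension must be reconstructed from the $\varepsilon^N$-framing of a high-codimension embedding, which is the Compression Theorem (and its cobordism version for well-definedness). One can trade compression for the Hirsch--Smale immersion theorem, comparing the space of immersions $V\looparrowright\R^{m+n}$ with the space of bundle monomorphisms $TV\hookrightarrow\varepsilon^{m+n}$ and running a parametrised Pontryagin--Thom argument instead; either way a single substantive geometric input is unavoidable and is the crux. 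This is the content of the theorems of Wells and of Eccles--Grant cited in the statement.
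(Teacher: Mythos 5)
Your proposal is correct, and it is essentially the standard argument: the paper itself gives no proof of this proposition, quoting it as well known from Wells and Eccles--Grant, and those sources prove it exactly by the route you describe (formal identification $\pi_*(\Gamma Y)\cong\pi^{\bf S}_*(Y)$, plus a Pontryagin--Thom construction whose geometric crux is the passage between codimension $c_i+k$ immersions and stabilized embeddings, handled by the Compression Theorem or equivalently Hirsch--Smale). One phrasing slip only: the $\varepsilon^N$-summand gives $N$ independent normal fields, not fields already parallel to the last $N$ coordinate directions --- straightening them into those directions is precisely what the Compression Theorem provides, as your following sentence in fact uses.
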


Hence there is a spectral sequence in which the starting page is given by the cobordism groups of $\tilde\xi_i$-immersions:
$$
E^1_{p,q} = \pi^{\bf S}_{p+q}(T\tilde\xi_p) = Imm^{\tilde\xi_p}(p+q-c_i-k)
$$
and that abuts to the cobordism groups of $\tau$-maps. The differentials of this spectral sequence encode the clutching maps of the singularities that belong to $\tau$. For example, one may take an isolated $\eta_s$ singularity $g:(D^{c_s},\S^{c_s-1}) \to (D^{c_s+k},\S^{c_s+k-1})$; it would correspond to a generator $\iota_s$ of $\pi^{\bf S}_{c_s+k}(T\tilde\xi_s)$ (which is either $\Z$ or $\Z_2$). On the boundary $\partial g:\S^{c_s-1} \to \S^{c_s+k-1}$ of the map $g$, the $\eta_{s-1}$-points are the most complicated and therefore they form a $\tilde\xi_{s-1}$-immersion. The cobordism class of this immersion is the image $d^1(\iota_s)$ of $\iota_s$ under the differential $d^1$. Now assume that $d^1(\iota_s)=0$, that is, the $\eta_{s-1}$-points of $\partial g$ form a null-cobordant $\tilde\xi_{s-1}$-immersion. By \cite[Theorem 8]{nagycikk} and \cite{keyfibration}, any such cobordism can be extended to a $\tau_{s-1}$-cobordism that connects $\partial g$ with a $\tau_{s-2}$-map $\partial_2g : M^{c_s-1} \to \S^{c_s+k-1}$. The $\eta_{s-2}$-points of $\partial_2g$ form a $\tilde\xi_{s-2}$-immersion, and its cobordism class is the image $d^2(\iota_s)$ of $\iota_s$ under the differential $d^2$. If this image is $0$, then again we can change $\partial_2g$ by a cobordism to eliminate $\eta_{s-2}$-points, and so on.

\par

So far we have described two spectral sequences. We will show that in a special case the second one coincides with the first one, and this will allow us to study the clutchings of the singular strata.(Our proof of this equality of the two spectral sequences is independent of the paper \cite{nagycikk} to which we referred in this section.)

\section{Codimension $2$ immersions and their projections}\label{section:prim}

Let $\gamma \to \C P^\infty$ be the canonical complex line bundle and let $\gamma_r$ be its restriction to $\C P^r$. Let us consider an immersion $f : M^n \looparrowright \R^{n+2}$ of an oriented closed manifold. We call $f$ a \emph{$\gamma_r$-immersion} if its normal bundle is pulled back from the bundle $\gamma_r$. Let $Imm^{\gamma_r}(n)$ denote the cobordism group of $\gamma_r$-immersions of $n$-dimensional manifolds into $\R^{n+2}$. Analogously to Proposition \ref{prop:immGamma} we have $Imm^{\gamma_r}(n) \cong \pi^{\bf S}_{n+2}(\C P^{r+1})$.

\par

{\bf Definition:}
A smooth map $g: M^n \to \R^{n+k}$ is called a \emph{prim} map if
\begin{itemize}
\item $\dim \ker dg \leq 1$, and
\item the line bundle formed by the kernels of $dg$ over the set $\Sigma$ of singular points of $g$ is trivialized.
\end{itemize}

{\bf Remark:} Note that choosing any smooth function $h$ on the source manifold $M$ of $g$ such that the derivative of $h$ in the positive direction of the kernels $\ker dg$ is positive gives an immersion $f=(g,h):M \looparrowright \R^{n+k} \times \R^1$. So $g$ is the {\emph{pr}}ojection of the {\emph{im}}mersion $f$, motivating the name ``prim map''.

\par

The singularity types of germs for which the kernel of the differential is $1$-dimensional form an infinite sequence $\Sigma^{1,0}$ (fold), $\Sigma^{1,1,0}$ (cusp), $\Sigma^{1,1,1,0}$ etc. Let us denote by $\Sigma^{1_r}$ the symbol $\Sigma^{1,\dots,1,0}$ that contains $r$ digits $1$. We call a prim map \emph{$\Sigma^{1_r}$-prim} if it has no singularities of type $\Sigma^{1_s}$ for $s>r$. The cobordism group of prim $\Sigma^{1_r}$-maps of cooriented $n$-dimensional manifolds into an $(n+1)$-manifold $N$ will be denoted by $Prim\Sigma^{1_r}(N)$, and we will use $Prim\Sigma^{1_r}(n)$ to denote $Prim\Sigma^{1_r}(\S^{n+1})$.

\begin{thm}\label{thm:main}
$$Prim\Sigma^{1_r}(n) \cong \pi^{\bf S}_{n+2}(\C P^{r+1}).$$
\end{thm}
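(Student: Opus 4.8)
The plan is to prove the isomorphism by presenting both sides as the cobordism group of codimension $2$ immersions whose normal bundle is induced from $\gamma_r$. The isomorphism $Imm^{\gamma_r}(n)\cong\pi^{\bf S}_{n+2}(\C P^{r+1})$ is recorded above (Pontryagin--Thom for immersions, together with the classical fact that $T\gamma_r=\C P^{r+1}$), so it suffices to construct a natural isomorphism $Prim\,\Sigma^{1_r}(n)\cong Imm^{\gamma_r}(n)$. The link is the description of a prim map as a \emph{projected immersion}. Given a prim $\Sigma^{1_r}$-map $g\colon M^n\to\S^{n+1}$, choose, as in the Remark above, a function $h\colon M\to\R$ whose derivative is positive along the trivialized kernel line field over the singular set; then $\tilde g=(g,h)\colon M^n\looparrowright\S^{n+1}\times\R$ is an immersion — it is already one over the regular set, and over the singular set $dh$ supplies the derivative missing from $dg$ — and, since $\S^{n+1}\times\R\cong\R^{n+2}\setminus\{\mathrm{pt}\}$ is open in $\R^{n+2}$, its normal bundle there is the rank $2$ bundle $\nu_{\tilde g}$. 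Conversely, the image of a codimension $2$ immersion into $\R^{n+2}$ misses a point, hence lies in a copy of $\S^{n+1}\times\R$; composing with the projection onto $\S^{n+1}$ and perturbing slightly yields a prim map all of whose singularities are Morin singularities $\Sigma^{1_s}$, $s\geq 1$.

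First I would dispose of the formal points. Since $g$ is cooriented and $\R$ is oriented, $\nu_{\tilde g}$ is an oriented rank $2$ real bundle, hence carries a canonical complex line bundle structure, so $\tilde g$ is tautologically a $\gamma_\infty$-immersion. The cobordism class of $\tilde g$ is independent of the auxiliary $h$ (the admissible functions form a convex set, and a straight-line homotopy through admissible functions gives a cobordism), disjoint union goes to disjoint union, and the projection construction inverts $g\mapsto\tilde g$ up to cobordism by general position; all of this carries over to cobordisms. What remains is the matching of the two decorations: \emph{$g$ has no $\Sigma^{1_s}$-points with $s>r$} versus \emph{$\nu_{\tilde g}$ is induced from $\gamma_r$}.

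This last equivalence is the heart of the proof, and the step I expect to be the main obstacle. The idea is to read the depth of the Morin singularities of $g$ off the normal line bundle $\nu_{\tilde g}$: near a $\Sigma^{1_s}$-point the sheet $d\tilde g(TM)$ meets the vertical fibre with contact of order exactly $s+1$, and the osculating data of this contact — concretely, the jet along the singular stratum of the section of $\nu_{\tilde g}$ obtained by projecting the vertical vector field off $d\tilde g(TM)$ — assemble, precisely when all contact orders are at most $r+1$, into a fibrewise injective bundle homomorphism $\nu_{\tilde g}\hookrightarrow\C^{r+1}$ into the trivial bundle, that is, into a classifying map of $\nu_{\tilde g}$ to $\C P^{r}$, which is exactly a $\gamma_r$-structure; in the other direction a $\gamma_r$-structure supplies just enough room to perturb the projection and kill all contact of order $>r+1$, and with it all $\Sigma^{1_s}$ with $s>r$. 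To make this rigorous one has to (a) write down the universal local model of the $\Sigma^{1_r}$-stratum of a prim map, in particular its normal bundles in source and target, whose structure group reduces to $U(1)$, and (b) promote the homological vanishing of the $\Sigma^{1_{r+1}}$-locus to its actual removal by surgery inside prim cobordism; step (b) is the genuinely technical point. A more homotopy-theoretic variant of the whole argument is an induction on $r$ that compares, by the five lemma, the long exact sequence of the key fibration adjoining the singularity $\Sigma^{1_r}$ with that of the cofibre sequence $\C P^{r}\hookrightarrow\C P^{r+1}\to\S^{2r+2}$; the base case $r=0$ is the elementary identification of $Prim\,\Sigma^{1_0}(n)$, the cobordism group of cooriented codimension $1$ immersions, with $\pi^{\bf S}_{n+1}(\S^1)\cong\pi^{\bf S}_{n+2}(\C P^1)$, and on this route the obstacle reappears as the identification of the base of the key fibration with $\Gamma\,\S^{2r+1}$.
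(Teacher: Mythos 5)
Your proposal is essentially the paper's argument: the forward step (reading a classifying map $M\to\C P^r$ off the osculating data of the vertical field --- concretely, the successive sections $s_0,s_1,\dots$ of the normal bundle, extended to sections $\sigma_0,\dots,\sigma_r$ with no common zero) is exactly how the paper shows that the lift of a prim $\Sigma^{1_r}$-map is a $\gamma_r$-immersion, and your ``homotopy-theoretic variant'' --- induction on $r$ with the five lemma, comparing the key fibration $X^{r-1}\subset X^r\to\Gamma\S^{2r+1}$ with the fibration $\Omega\Gamma\C P^{r}\subset\Omega\Gamma\C P^{r+1}\to\Gamma\S^{2r+1}$ obtained by applying $\Omega\Gamma$ to the cofibre sequence, with base case $r=0$ handled by the Compression Theorem --- is precisely how the paper closes the argument. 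The surgery step you single out as the main obstacle is never carried out in the paper: it is bypassed entirely by this classifying-space comparison, with the existence of the key fibration and the triviality of $\tilde\xi_r$ for prim Morin maps (so that its base is indeed $\Gamma\S^{2r+1}$) imported from \cite{nagycikk}, \cite{keyfibration} and Part I \cite{partI} rather than proved afresh.
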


The theorem is related to the spectral sequence described in Section \ref{section:SS} in the following way. Set $k=1$ and $\eta_i = \Sigma^{1_i}$, so that $\tau_r = \left\{ \Sigma^{1_i} : i\leq r \right\}$. Denote by $X^r$ the classifying space $X_{Prim\_\tau_r}$ of prim $\tau_r$-maps. The main result, implying that the spectral sequences of Section \ref{section:SS} coincide, is the following:

\begin{lemma}\label{lemma:main}
The classifying space $X^r$ (whose homotopy groups give the cobordism groups of prim $\tau_r$-maps) is
$$
X^r \cong \Omega \Gamma \C P^{r+1}.
$$
\end{lemma}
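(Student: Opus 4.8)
The plan is to build the identification $X^r \simeq \Omega\Gamma\C P^{r+1}$ by induction on $r$, matching the key fibration of prim $\tau_r$-cobordisms with the fibration one gets by applying $\Omega\Gamma(-)$ to the filtration $\C P^r \subset \C P^{r+1}$. For the base case $r=0$: a prim $\Sigma^{1_0}$-map is an immersion with trivialized kernel line bundle, i.e.\ essentially a codimension-$1$ immersion together with the extra direction; unwinding the definitions (and using the ``prim = projection of immersion'' remark), $X^0$ classifies such objects, and a direct computation shows $X^0 \simeq \Omega\Gamma\C P^1 = \Omega\Gamma S^2$. This should be a short check: $\C P^1 = S^2$ and the cobordism group of prim fold-free maps into $\S^{n+1}$ is $\pi^{\bf S}_{n+1}(S^1)$-ish, matching $\pi_n(\Omega\Gamma S^2) = \pi^{\bf S}_{n+2}(S^2)$ after the loop shift.

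For the inductive step, assume $X^{r-1} \simeq \Omega\Gamma\C P^r$. The key fibration of Section~\ref{section:SS} gives $X^{r-1} \hookrightarrow X^r \to B_r$, where $B_r = \Gamma T\tilde\xi_r$ and $\tilde\xi_r$ is the universal normal bundle (in the target) of the $\Sigma^{1_r}$-stratum for prim maps. The crucial input is the identification of this normal bundle: for prim $\Sigma^{1_r}$-singularities the maximal compact automorphism group and its target representation should be computed (this is where ingredient b) from the introduction enters), and the claim to establish is that $T\tilde\xi_r \simeq \C P^{r+1}/\C P^r \simeq S^{2r+2}$ after suspension, or more precisely that $B_r = \Gamma T\tilde\xi_r$ agrees with the cofiber term in $\Omega\Gamma$ applied to the filtration. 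Then I would invoke the standard fact that applying $\Omega\Gamma(-) = \Omega\Omega^\infty\Sigma^\infty(-)$ to a cofibration $\C P^r \to \C P^{r+1} \to \C P^{r+1}/\C P^r$ yields, up to homotopy, a fibration $\Omega\Gamma\C P^r \to \Omega\Gamma\C P^{r+1} \to \Omega\Gamma(\C P^{r+1}/\C P^r)$, and that $\Gamma(\C P^{r+1}/\C P^r) \simeq \Gamma S^{2r+2} = B_r$. Matching the two fibrations over the common base and fiber, and checking that the two clutching (classifying) maps $B_r \to BX^{r-1}$-type data agree, completes the induction.

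The main obstacle will be the geometric identification of the base space $B_r$ of the key fibration with $\Omega\Gamma(\C P^{r+1}/\C P^r)$ — equivalently, pinning down $\tilde\xi_r$ and the Thom space $T\tilde\xi_r$ for the prim $\Sigma^{1_r}$-singularity. One expects $G_r$ to be a circle (or a small extension of one) acting on the $(2r+2)$-dimensional target germ space essentially as complex multiplication on a direct sum of weight-spaces, so that $\tilde\xi_r$ is a sum of powers of $\gamma$ over $BS^1 = \C P^\infty$ and $T\tilde\xi_r$ stably splits off $\C P^{r+1}/\C P^r$; but making this precise, and in particular getting the \emph{stable} homotopy type right rather than just the homology, is the delicate point. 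A secondary difficulty is that the key fibration only identifies $B_r$ up to the fibration's classifying data, so one must also verify that the connecting map $B_r \to \Omega BX^{r-1}$ induced by the prim structure matches the one coming from the $\C P^{r+1}$ attaching map; here the ``prim = projection of an immersion'' description of the singularity, together with Proposition~\ref{prop:immGamma}, should force the two to coincide because both are determined by the same normal-bundle data. Once Lemma~\ref{lemma:main} is in hand, Theorem~\ref{thm:main} follows immediately by taking $\pi_n$ and shifting: $Prim\Sigma^{1_r}(n) = \pi_{n+1}(X^r) = \pi_{n+1}(\Omega\Gamma\C P^{r+1}) = \pi_{n+2}(\Gamma\C P^{r+1}) = \pi^{\bf S}_{n+2}(\C P^{r+1})$.
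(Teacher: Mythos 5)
Your overall skeleton---induction on $r$, comparing the key fibration $X^{r-1}\hookrightarrow X^r\to B_r$ with the fibration obtained by applying $\Omega\Gamma$ to the cofibration $\C P^r\subset \C P^{r+1}\to \S^{2r+2}$, base case via the Compression Theorem---is the same as the paper's. But there is a genuine gap at the step you describe as ``matching the two fibrations over the common base and fiber, and checking that the two clutching maps agree.'' Two fibrations with weakly equivalent fibers over the same base need not have equivalent total spaces, and you give no mechanism for comparing the clutching data; saying that Proposition \ref{prop:immGamma} ``should force the two to coincide because both are determined by the same normal-bundle data'' is not an argument. What is actually needed (and what the paper supplies) is a \emph{map} $X^r\to\Omega\Gamma\C P^{r+1}$, i.e.\ a natural transformation $Prim\Sigma^{1_r}(N)\to[N,\Omega\Gamma\C P^{r+1}]$, commuting with the projections to the common base $\Gamma\S^{2r+1}$; only then does the five-lemma applied to the two long exact homotopy sequences close the induction. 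This comparison map is the geometric heart of the proof: lifting a prim $\Sigma^{1_r}$-map to an immersion $f:M\looparrowright\R^{n+2}$, one projects the vertical vector field successively to the normal bundle $\nu_f$, then to the normal bundles of the strata $\Sigma_1\supset\Sigma_2\supset\cdots$, obtaining sections $s_0,\dots,s_r$ whose extensions $\sigma_0,\dots,\sigma_r$ to $M$ have no common zero precisely because there are no $\Sigma^{1_{r+1}}$-points; the resulting map $x\mapsto[\sigma_0(x):\cdots:\sigma_r(x)]\in\C P^r$ induces $\nu_f$ (up to conjugation) from $\gamma_r$, which is part a) of Lemma \ref{lemma:filtration}. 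Your proposal never produces this compression of the classifying map from $\C P^\infty$ to $\C P^r$, so the comparison of the two fibrations is not actually set up.

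A secondary, smaller point: you locate the ``main obstacle'' in computing the maximal compact automorphism group of the prim $\Sigma^{1_r}$ germ in order to identify $B_r$. In fact no such computation is needed here: for prim Morin singularities the universal target normal bundle $\tilde\xi_r$ is trivial (this is quoted from Part I), so $T\tilde\xi_r\cong\S^{2r+1}$ and $B_r=\Gamma\S^{2r+1}=\Omega\Gamma\S^{2r+2}=\Omega\Gamma(\C P^{r+1}/\C P^r)$ immediately. The genuinely delicate point is the one described above, namely constructing the map of total spaces, not the identification of the base.
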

 
{\bf Remarks:}
\begin{enumerate}
\item Recall that the functor $\Gamma$ turns cofibrations into fibrations. That is, if $(Y,B)$ is a pair of spaces such that $B \subset Y \to Y/B$ is a cofibration, then $\Gamma Y \to \Gamma (Y/B)$ is a fibration with fibre $\Gamma B$. The same is true if we apply to a cofibration the functor $\Omega\Gamma$, that is, $\Omega \Gamma Y \to \Omega \Gamma (Y/B)$ is a fibration with fibre $\Omega \Gamma B$.
\item Further let us recall the \emph{resolvent} of a fibration map. If $E \overset{F}{\to} B$ is a fibration then the sequence of maps $F\hookrightarrow E \to B$ can be extended to the left as follows. Turn the inclusion $F \hookrightarrow E$ into a fibration. Then it turns out that its fiber is $\Omega B$, the loop space of $B$. The fiber of the inclusion $\Omega B \to F$ is $\Omega E$, the fiber of $\Omega E \to \Omega B$ is $\Omega F$ etc. Now start with the filtration $\C P^0 \subset \dots \subset \C P^r \subset \C P^{r+1} \subset \dots \subset \C P^\infty$ considered by Mosher and apply $\Omega\Gamma$ to it; we get an iterated fibration filtration. So according to Subsection \ref{section:singSS} we obtain a spectral sequence that clearly coincides with Mosher's spectral sequence (obtained from the filtration $\C P^0 \subset \C P^1 \subset \dots \subset \C P^\infty$ by applying the extraordinary homology theory $\pi^{\bf S}_*$, of the stable homotopy groups). By Lemma \ref{lemma:main} this also coincides with the singularity spectral sequence described in Subsection \ref{section:singSS} for the special case of prim maps of oriented $n$-manifolds into $\R^{n+1}$.
\end{enumerate}

\par

{\bf Summary:} The singularity spectral sequence for codimension $1$ cooriented prim maps coincides with the spectral sequence of Mosher.

\par

{\bf Remark:} Theorem \ref{thm:main} follows obviously from Lemma \ref{lemma:main}.

\par

{\bf Remark:}  Note that the space of immersions of an oriented $n$-manifold $M$ into $\R^{n+2}$ (denote it by $Imm(M,\R^{n+2})$) is homotopy equivalent to the space of prim maps of $M$ into $\R^{n+1}$ (denoted by $Prim(M,\R^{n+1})$). Indeed, projecting an immersion $M^n \looparrowright \R^{n+2}$ into a hyperplane one gets a prim map $ M^n \to \R^{n+1}$ together with an orientation on the kernel of the differentials. Conversely, having a prim map $g: M^n \to \R^{n+1}$ one can obtain an immersion $f: M^n \looparrowright \R^{n+2}$ by choosing a function $h: M \to \R^1$ that has positive derivative in the positive direction of the kernels of $df$ and putting $f=(g,h)$. The set of admissible functions $h$ (for a fixed $g$) is clearly a convex set.

\par

Hence for any $M$ the projection induces a map $Imm(M, \R^{n+2}) \to Prim(M,\R^{n+1})$ that is a weak homotopy equivalence since the preimage of any point of $Prim(M,\R^{n+1})$ is a convex set. The space  $\Gamma \C P^\infty$ is the classifying space of codimension $2$ immersions. The previous remark implies that $\Omega \Gamma \C P^\infty$ is the classifying space of codimension $1$ prim maps. Moreover, the correspondence of the spaces of immersions and prim maps established above respects the natural filtrations of these spaces as the following lemma shows.

\begin{lemma}\label{lemma:filtration}
\begin{enumerate}[a)]
\item If an immersion $f: M^n \looparrowright \R^{n+2}$ has the property that its composition with the projection $\pi: \R^{n+2} \to \R^{n+1}$ has no $\Sigma^{1_i}$-points for $i>r$, then the normal bundle of $f$ can be induced from the canonical line bundle $\gamma_r$ over $\C P^r$.

\item The classifying space for the immersions into $\R^{n+2}$ that have $\Sigma^{1_r}$-map projections in $\R^{n+1}$  is weakly homotopically equivalent to the classifying space of immersions equipped with a pullback of their normal bundle from $\gamma_r$ (this latter being $\Gamma \C P^{r+1}$).
\end{enumerate}
\end{lemma}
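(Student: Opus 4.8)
\emph{Plan of proof.}

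\smallskip
\noindent\textit{Part a).} The normal bundle $\nu_f$ of an orientation-preserving immersion of an oriented manifold is an oriented $2$-plane bundle, hence a complex line bundle; put $L=\nu_f^{*}$. To produce $\phi\colon M\to\C P^{r}$ with $\phi^{*}\gamma_r\cong\nu_f$ it is enough to exhibit $r+1$ sections of $L$ with no common zero (they then become the pull-backs of the $r+1$ coordinate sections of the dual bundle $\gamma_r^{*}$). Every constant vector of $\R^{n+2}$ gives a linear functional on $T\R^{n+2}|_{f(M)}$, and its restriction to $\nu_f$ is a section of $L$; let $s_0$ be the one coming from the direction that $\pi$ forgets. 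Then $s_0$ vanishes exactly on $\Sigma^{1}(\pi f)$, since $x$ is singular for $\pi f$ precisely when this direction is tangent to $f(M)$ at $f(x)$. Now primness is used: along $\Sigma^{1}$ the kernel line field of $d(\pi f)$ is trivialized, hence spanned by a genuine vector field $v$; taking the intrinsic derivative of $s_0$ along $v$ at the points of the zero set $\Sigma^{1}$ gives a section $s_1$ of $L|_{\Sigma^{1}}$ whose zero set is, by the Thom--Boardman description of the strata of a prim map, exactly $\Sigma^{1,1}(\pi f)$. Iterating, $s_{j}$ is a section of $L|_{\Sigma^{1_j}}$ vanishing precisely on $\Sigma^{1_{j+1}}(\pi f)$. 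A $C^{\infty}$-small perturbation of $f$ (harmless, as ``no $\Sigma^{1_{r+1}}$-point'' is an open condition) makes all strata submanifolds and all these vanishings transverse, so the $s_j$ are honest smooth sections. Because $\pi f$ is $\Sigma^{1_r}$-prim, $\Sigma^{1_{r+1}}=\emptyset$, so $s_{r}$ is nowhere zero on $\Sigma^{1_r}$. I now extend each $s_j$ arbitrarily to a global section $t_j$ of $L$; the extensions form an affine, hence contractible, space, so the outcome is well defined up to homotopy, and the nested vanishing of the $s_j$ forces $t_0,\dots,t_r$ to have no common zero. Then $\phi=(t_0:\cdots:t_r)$ does the job.

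\smallskip
\noindent\textit{Part b).} Write $Y_r$ for the classifying space in question. Part a) gives a natural transformation of cobordism functors (retain only the normal-bundle structure), hence a map $\psi_r\colon Y_r\to\Gamma\C P^{r+1}$; by the Pontryagin--Thom theorem for immersions one only has to check $\psi_r$ is an isomorphism on all cobordism groups. On $n$-manifolds with $n\le 2r$ this is clear, for then $\pi f$ is automatically $\Sigma^{1_r}$-prim (the stratum $\Sigma^{1_{r+1}}$ would have codimension $2r+2>n$), so $Y_r$ classifies all codimension-$2$ immersions in that range, and so does $\Gamma\C P^{r+1}$ because $\C P^{r+1}\hookrightarrow\C P^{\infty}$ is $(2r+3)$-connected. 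In the remaining range I would induct on $r$. Under $\psi$ the inclusion $Y_{r-1}\hookrightarrow Y_r$ matches $\Gamma\C P^{r}\hookrightarrow\Gamma\C P^{r+1}$ (for a $\Sigma^{1_{r-1}}$-projection the section $s_r$ of part a) is empty and may be extended by $0$, so $\phi$ lands in $\C P^{r-1}\subset\C P^{r}$). The key step is to construct by hand the fibration
\[
Y_{r-1}\hookrightarrow Y_r\xrightarrow{\;p_r\;}\Gamma\S^{2r+2}
\]
and to identify it with $\Gamma\C P^{r}\hookrightarrow\Gamma\C P^{r+1}\to\Gamma(\C P^{r+1}/\C P^{r})=\Gamma\S^{2r+2}$, the fibration obtained by applying $\Gamma$ to the cofibration $\C P^{r}\subset\C P^{r+1}$. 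For this I would show: (i) the $\Sigma^{1_r}$-stratum of such an immersion is an immersed $(n-2r)$-manifold in $\R^{n+2}$ whose normal bundle is canonically framed by the iterated Hessians of $\pi f$ along the trivialized kernel direction, hence defines a class in $\pi_{n+2}(\Gamma\S^{2r+2})$ (as in Proposition \ref{prop:immGamma}) — this class is $p_r$; (ii) $p_r$ is a quasifibration with fibre $Y_{r-1}$, because every prescribed framed codimension-$(2r+2)$ immersed manifold can be realized as the $\Sigma^{1_r}$-stratum of an immersion with $\Sigma^{1_r}$-projection by gluing in a standard singularity model along it, uniquely up to the action of $Y_{r-1}$. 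Given (i)--(ii), the two fibrations have the same base and, by the inductive hypothesis $\psi_{r-1}\colon Y_{r-1}\simeq\Gamma\C P^{r}$, weakly equivalent fibres, so $\psi_r$ is a weak equivalence. The base case $r=0$, namely $Y_0\simeq\Gamma\S^{2}=\Gamma\C P^{1}$, follows from Hirsch's immersion theorem: an immersion whose projection is again an immersion has trivial normal bundle (the forgotten direction is a nowhere-zero section of $\nu_f$), and conversely a framed codimension-$2$ immersion may be regularly homotoped until the forgotten direction is nowhere tangent to it.

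\smallskip
\noindent\textit{Main obstacle.} Part a), and in part b) the Thom--Boardman bookkeeping, the connectivity estimate and the comparison of fibrations, are all routine. The genuinely geometric point is step (ii): proving directly --- without quoting \cite{nagycikk} --- that the top $\Sigma^{1_r}$-stratum can be resolved and that the resolution is parametrically essentially unique, so that $Y_r$ is indeed a twisted product of $Y_{r-1}$ and $\Gamma\S^{2r+2}$. The delicate ingredients are that primness makes the stratum's normal bundle honestly (not merely stably) framed, and that the space of ``fillings'' along the stratum is contractible modulo $Y_{r-1}$; I expect this to require a parametrized general-position / Hirsch-type argument in a tubular neighbourhood of the stratum.
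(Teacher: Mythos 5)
Your proposal follows the paper's own strategy very closely. Part a) is essentially the paper's proof: the vertical direction gives a section $s_0$ of the normal bundle vanishing exactly on $\Sigma^1(\pi\circ f)$, differentiating/projecting along the singular strata gives nested sections $s_j$ over $\Sigma^{1_j}$ vanishing exactly on $\Sigma^{1_{j+1}}$, arbitrary extensions have no common zero, and projectivizing yields $\varphi:M\to\C P^r$ with $\varphi^*\gamma_r\cong Hom_{\C}(\nu_f,\C)$ (the paper phrases it through $\nu_f$ and its $\C$-dual exactly as you do through $L=\nu_f^*$). Part b) is also the paper's route: induction on $r$, base case $r=0$ handled by a compression/Hirsch-type argument (the paper cites the Compression Theorem \cite{compression}), and for the inductive step a comparison of two fibrations over the same base followed by the five lemma; the only cosmetic difference is that the paper works with the looped spaces $X^r\to\Gamma\S^{2r+1}$ (prim maps into arbitrary targets $N$), while you work unlooped over $\Gamma\S^{2r+2}$.

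The genuine gap is exactly your step (ii), which you yourself flag as the main obstacle: the claim that extracting the top stratum gives a (quasi)fibration $Y_{r-1}\to Y_r\to\Gamma\S^{2r+2}$, i.e.\ that every framed codimension-$(2r+2)$ immersed manifold can be realized as the $\Sigma^{1_r}$-stratum, parametrically and essentially uniquely up to $Y_{r-1}$. This is precisely the \emph{key fibration} $X^{r-1}\hookrightarrow X^r\to\Gamma T\tilde\xi_r$ of singularity cobordism theory; the paper does not reprove it either, but imports it from \cite{nagycikk} (Definition 109) and \cite{keyfibration}, specialized to prim maps where $\tilde\xi_r$ is trivial by \cite{partI}, so that $\Gamma T\tilde\xi_r=\Gamma\S^{2r+1}$. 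As written, your argument is therefore incomplete at the one step that carries the real geometric content: it is not a routine general-position statement, and your "I expect this to require a parametrized Hirsch-type argument" is an acknowledgement rather than a proof. The fix is either to quote the key fibration (as the paper does) or to actually supply that Pontryagin--Thom-type realization/uniqueness argument. Two smaller points to make explicit if you complete the proof: you must check that $\psi_r$ commutes with the two projections to the base (abstractly equivalent fibres over the same base do not suffice; the paper arranges the diagram so that the base map is the identity and then applies the five lemma to the map of homotopy exact sequences), and in part a) the sections are only defined over the closed strata, so one should argue as the paper does with the nested zero sets of the genuine prim map rather than perturbing $f$.
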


{\bf Remark:} Lemma \ref{lemma:filtration} implies that $Prim\Sigma^{1_r}(n) \cong Imm^{\gamma_r}(n) \cong \pi^{\bf S}_{n+2}(T\gamma_r) = \pi^{\bf S}_{n+2}(\C P^{r+1})$, therefore Theorem \ref{thm:main} holds.

\begin{proof}
Let $f:M^n \looparrowright \R^{n+2}$ be an immersion such that the composition $g=\pi \circ f : M^n \to \R^{n+1}$ has no $\Sigma^{1_{r+1}}$-points. We show that the normal bundle $\nu_f$ of $f$ can be induced from the bundle $\gamma_r$. Let $\bf v$ denote the unit vector field $\partial/\partial x_{n+2}$ in $\R^{n+2}$; we consider it to be the upward directed vertical unit vector field. For any $x \in M$ consider the vector $\bf v$ at the point $f(x)$ and project it orthogonally to the normal space of the image of $f$ (i.e. to the orthogonal complement of the tangent space $df(T_xM)$). We obtain a section $s_0$ of the normal bundle $\nu_f \to M$ which vanishes precisely at the singular points of $g$. Denote by $\Sigma_1$ the zero-set of $s_0$, $\Sigma_1 = s_0^{-1}(0)$; then $\Sigma_1$ is a codimension $2$ smooth submanifold of $M$.
Let $\nu_1$ be the normal bundle of $\Sigma_1$ in $M$. Then $\nu_1$ is isomorphic to the restriction of $\nu_f$ to $\Sigma_1$ (the derivative of $s_0$ establishes an isomorphism). Consider now the vertical vector field $\bf v$ at the points of $\Sigma_1$ and project it orthogonally to the fibers of $\nu_1$. This defines a section $s_1$ of the bundle $\nu_1$, which vanishes precisely at the $\Sigma^{1,1}$-points of the map $g$ (at the singular set of the restriction of the projection $\pi$ to $\Sigma_1$). Denote this zero-set by $\Sigma_2$ and its normal bundle in $\Sigma_1$ by $\nu_2$. Next we consider $\bf v$ at the points of $\Sigma_2$ and project it to the fibers of $\nu_2$, obtaining a section $s_2$ of $\nu_2$ that vanishes precisely at the $\Sigma^{1_3}$-points of $g$, and so on. If $g$ has no $\Sigma^{1_{r+1}}$-points, then this process stops at step $r$ because $s_{r+1}^{-1}(0)$ will be empty.
\par
{\bf Claim:} In the situation above (when $s^{-1}_{r+1}(0)$ is empty) the bundle $\nu_f$ admits $(r+1)$ sections $\sigma_0, \dots, \sigma_r$ such that they have no common zero: $\cap_{i=0}^{r} \sigma_i^{-1}(0) = \emptyset$.
\par
{\it Proof of the claim:} Put $\sigma_0=s_0$. To define $\sigma_1$, we consider the section $s_1$ of the normal bundle $\nu_1$ of $\Sigma_1$ in $M$. This normal bundle $\nu_1$ is isomorphic to the restriction of $\nu_f$ to $\Sigma_1$, hence we can consider $s_1$ as a section of $\nu_f$ over $\Sigma_1$. Extend this section arbitrarily to a section of $\nu_f$ (over $M$); this will be $\sigma_1$. Similarly, $s_2$ is a section of the normal bundle $\nu_2$ of $\Sigma_2=s_1^{-1}(0)$ in $\Sigma_1$. The bundle $\nu_2$ is isomorphic to the normal bundle of $\Sigma_1$ in $M$ restricted to $\Sigma_2$, and this bundle in turn is isomorphic to $\nu_f$ restricted to $\Sigma_2$. Hence $s_2$ can be considered as a section of $\nu_f$ over $\Sigma_2$. Extending it arbitrarily to the rest of $M$ gives us $\sigma_2$. We continue the process and obtain $\sigma_3$, $\dots$, $\sigma_r$. Clearly these sections satisfy $\cap_{i=0}^{r} \sigma_i^{-1}(0) = \cap_{i=0}^{r} s_i^{-1}(0) = \emptyset$ as claimed.
\par
Next we define a map $\varphi: M \to \C P^r$ by the formula $\varphi(x) = [\sigma_0(x) : \dots : \sigma_r(x)]$, where we consider all $\sigma_j(x)$ as complex numbers in the fiber $(\nu_f)_x \cong \C$. This map is well-defined, since changing the trivialization of the fiber $(\nu_f)_x$ (while keeping its orientation and inner product) multiplies all the values $\sigma_j(x)$ by the same complex scalar, hence $\varphi(x) \in \C P^r$ remains the same.
\par
{\bf Claim:} $Hom_{\C}(\nu_f,\C) \cong \varphi^* \gamma_r$.
\par
Indeed, we can define a fibrewise isomorphism from $Hom_{\C}(\nu_f,\C)$ to $\gamma_r$ over the map $\varphi$ by sending a fiberwise $\C$-linear map $\alpha:\nu_f \to \C$ to $(\alpha(\sigma_0),\dots,\alpha(\sigma_r)) \in \C^{r+1}$. The image of this map over the point $x \in M$ lies on the line $\varphi(x)$ and since not all $\sigma_j$ vanish at $x$, the map is an isomorphism onto the corresponding fiber of $(\gamma_r)_{\varphi(x)}$.
\par
This proves part $a)$ of Lemma \ref{lemma:filtration}.
\par
The same argument can be repeated for any cooriented prim $\Sigma^{1_r}$-map of an $n$-dimensional manifold into an $(n+1)$-dimensional manifold. Thus for any target manifold $N$ we obtain a map $[N,X^r] = Prim\Sigma^{1_r}(N) \to [SN, \Gamma \C P^{r+1}] = [N,\Omega \Gamma \C P^{r+1}]$.
\par
Recall from Subsection \ref{section:singSS} that we can also assign to a $\Sigma^{1_r}$-map the set of its $\Sigma^{1_r}$-points, which form an immersion such that its normal bundle (in the target space) can be pulled back from the universal bundle $\tilde\xi_r$. This universal bundle $\tilde \xi_r$ in the case of prim Morin maps is trival (see \cite{partI}), hence $T\tilde\xi_r \cong \S^{2r+1}$. We will also use the existence of the key fibration mentioned in Subsection \ref{section:singSS} that says that this assignment fits into a fibration $X^{r-1} \subset X^r \to \Gamma \S^{2r+1}$.
\par
We now prove part $b)$ of Lemma \ref{lemma:filtration} by induction on $r$. For $r=0$ and any given target manifold $N$ the two classes of maps are $1$-framed codimension $2$ immersions into $N \times \R$ and codimension $1$ immersions into $N$ respectively. The spaces of these maps are weakly homotopically equivalent by the Compression Theorem \cite{compression}. The classifying space for both types of maps is $\Omega\Gamma\S^2$. For a general $r$, the operation of extracting the $\Sigma^{1_r}$-points from a prim map gives us the following commutative diagram:
$$
\xymatrix{
Prim\Sigma^{1_r}(N) = [N,X^r] \ar[r] \ar[d] & [N, \Gamma \S^{2r+1}]
\ar[d]_{=} \\
[SN,\Gamma \C P^{r+1}] = [N,\Omega\Gamma \C P^{r+1}] \ar[r] & [N, \Gamma \S^{2r+1}]
}
$$
The arrows of this diagram are natural transformations of functors, and thus correspond to maps of the involved classifying spaces (see \cite{Switzer}{Chapter 9, Theorem 9.13.}):
$$
\xymatrix{
X^r \ar[r] \ar[d] & \Gamma \S^{2r+1} \ar[d]_{=} \\
\Omega\Gamma \C P^{r+1} \ar[r] & \Gamma \S^{2r+1}
}
$$
By construction this diagram is commutative and the right-hand side map can be chosen to be the identity map. The horizontal arrows are Serre fibrations with fibers $X^{r-1}$ and $\Omega \Gamma\C P^{r}$ respectively. By the commutativity of the diagram fiber goes into fiber:
$$
\xymatrix{
X^{r-1} \ar@{}[r]|{\subset} \ar[d] & X^r \ar[r] \ar[d] & \Gamma \S^{2r+1} \ar[d]_{=} \\
\Omega\Gamma \C P^r \ar@{}[r]|{\subset} & \Omega\Gamma \C P^{r+1} \ar[r] & \Gamma \S^{2r+1}
}
$$
Consider the long homotopy exact sequence of the top and the bottom rows. The vertical maps of the diagram induce a map of these long exact sequences. By the induction hypothesis the map $X^{r-1} \to \Omega\Gamma \C P^r$ is a weak homotopy equivalence. The five-lemma then implies that the middle arrow is also a weak homotopy equivalence and $X^r$ is weakly homotopically equivalent to $\Omega\Gamma \C P^r$. This finishes the proof of Lemmas \ref{lemma:filtration} and \ref{lemma:main}.
\end{proof}

\section{Stable homotopy theory tools}\label{section:stable}
Mosher proved several statements concerning the differentials of his spectral sequence. Since it coincides with our singularity spectral sequence, Mosher's results translate into results on singularities. For proper understanding of the proofs of Mosher (which are highly compressed and not always complete) we elaborate them here. We start by recalling a few definitions and facts of stable homotopy theory.
\par
{\bf Definition:} Given finite $CW$-complexes $X$ and $Y$ we say that they are \emph{$S$-dual} if there exist iterated suspensions $\Sigma^p X$ and $\Sigma^q Y$ such that they can be embedded as disjoint subsets of $\S^N$ (for a suitable $N$) in such a way that both of them are deformation retracts of the complement of the other one.
\par
{\bf Definition:} Let $X$ be a finite cell complex with a single top $n$-dimensional cell. $X$ is \emph{reducible} if there is a map $\S^n \to X$ such that the composition $\S^n \to X \to X/sk_{n-1}X = \S^n$ has degree $1$.
\par
{\bf Definition:} Let $X$ be a finite cell complex with a single lowest (positive) dimensional cell, in dimension $n$. Then $X$ is \emph{coreducible} if there is a map $X \to \S^n$ such that its composition with the inclusion $\S^n \to X$ is a degree $1$ self-map of $\S^n$.
\par
Example: if $\varepsilon_B^n=B \times \R^n$ is the rank $n$ trivial bundle, then the Thom space $T\varepsilon^n_B$ is coreducible since the trivializing fibrewise map $\varepsilon^n_B \to \R^n$ extends to the Thom space and is identical on $\S^n$ (the one-point compactification of a fiber).
\par
{\bf Definition:} The space $X$ is $S$-reducible ($S$-coreducible) if it has an iterated suspension which is reducible (respectively, coreducible).
\par
\begin{prop}{\cite[Theorem 8.4]{Husemoller}}
If $X$ and $Y$ are $S$-dual cell complexes with top and bottom cells generating their respective homology groups, then $X$ is $S$-reducible if and only if $Y$ is $S$-coreducible.
\end{prop}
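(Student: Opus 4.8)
The plan is to deduce the statement from the purely formal properties of $S$-duality, using the hypotheses only in one place, to identify a particular dual map. First I would reformulate both conditions as the existence of a one-sided stable inverse. If $X$ has a single top cell in dimension $n$, write $c\colon X\to X/sk_{n-1}X=\S^n$ for the collapse map; since a degree $1$ self-map of a sphere is stably homotopic to the identity, $X$ is $S$-reducible precisely when $c$ admits a \emph{stable section}, i.e.\ a stable map $s\colon\S^n\to X$ with $c\circ s\simeq\mathrm{id}_{\S^n}$. Dually, if $Y$ has a single bottom cell in dimension $m$, write $i\colon\S^m\hookrightarrow Y$ for its inclusion; then $Y$ is $S$-coreducible precisely when $i$ admits a \emph{stable retraction} $r\colon Y\to\S^m$ with $r\circ i\simeq\mathrm{id}_{\S^m}$. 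The engine of the proof is that $S$-duality is a contravariant involution on the stable homotopy category of finite complexes: it gives a bijection $D\colon\{A,B\}\xrightarrow{\ \cong\ }\{DB,DA\}$ on classes of stable maps, satisfying $D(g\circ f)=D(f)\circ D(g)$, $D(\mathrm{id})=\mathrm{id}$ and $D^2\simeq\mathrm{id}$ (the last two together give the bijectivity). Here $DX\simeq Y$ (after suspending so that $X$ and $Y$ sit in some $\S^N$ as complementary deformation retracts) and $D(\S^n)\simeq\S^m$, with $m$ the dimension of the bottom cell of $Y$.

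The crucial step is to identify $D(c)$ with $i$, up to a stable self-equivalence of $\S^m$ (which is harmless). For this I would dualize the cofibration sequence $sk_{n-1}X\hookrightarrow X\xrightarrow{\ c\ }\S^n$. Spanier--Whitehead duality carries the inclusion of a subcomplex to the quotient map of the dual complexes, so the dualized sequence reads $\S^m\xrightarrow{\ D(c)\ }Y\to D(sk_{n-1}X)$ with $D(sk_{n-1}X)\simeq Y/\S^m$, and $Y$ is built from its bottom cell $\S^m$ by attaching cells dual to those of $X$. In particular the first map of this sequence is, up to stable homotopy, a map $\S^m\to Y$, and it remains to check that it induces an isomorphism on $m$-dimensional homology. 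This is exactly where the hypotheses enter: the top cell of $X$ generating $H_n(X)$ forces $c_*\colon H_n(X)\to H_n(\S^n)$ to be an isomorphism; Alexander/Spanier--Whitehead duality converts this into the statement that $D(c)$ is an isomorphism on (co)homology in the complementary degree; and the bottom cell of $Y$ generating $H_m(Y)$ then pins $D(c)$ down as the bottom-cell inclusion $i$, up to a unit. The main obstacle of the whole argument is precisely this bookkeeping: one must make the compatibility of $S$-duality with the skeletal filtrations explicit and track the (co)homology generators through the duality isomorphism. Everything else is formal manipulation.

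Granting $D(c)\simeq i$, the equivalence follows at once. If $X$ is $S$-reducible, pick a stable section $s$ of $c$; applying $D$ to $c\circ s\simeq\mathrm{id}_{\S^n}$ and using contravariance gives $D(s)\circ D(c)\simeq\mathrm{id}_{\S^m}$, so $D(s)\colon Y\to\S^m$ is a stable retraction of $i=D(c)$, i.e.\ $Y$ is $S$-coreducible. Conversely, if $Y$ is $S$-coreducible, pick a stable retraction $r$ of $i=D(c)$; since $D$ is a bijection on classes of stable maps there is a stable map $s\colon\S^n\to X$ with $D(s)=r$, and then $D(c\circ s)=D(s)\circ D(c)=r\circ i\simeq\mathrm{id}_{\S^m}=D(\mathrm{id}_{\S^n})$, whence $c\circ s\simeq\mathrm{id}_{\S^n}$ by injectivity of $D$; thus $X$ is $S$-reducible. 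Throughout one uses freely that degree $1$ self-maps of spheres are stably homotopic to the identity, so the ``degree $1$'' clauses in the definitions of reducibility and coreducibility are exactly the one-sided inverse conditions above.
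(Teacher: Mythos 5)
Your argument is correct, and it is worth noting at the outset that the paper itself gives no proof of this Proposition: it is quoted from Husemoller (Theorem 8.4), so there is no internal proof to compare against. Your route is the standard Spanier--Whitehead argument, and it dovetails with the machinery the paper does develop a few pages later: the fact you need, that dualizing the cofibration $sk_{n-1}X\hookrightarrow X\xrightarrow{c}\S^n$ turns the collapse $c$ into a map of the dual sphere into $Y$ with cofibre $D_S[sk_{n-1}X]\simeq Y/\S^m$, is exactly the mechanism of the paper's Lemma~\ref{lemma:restriction} (omitting the top cell is $S$-dual to contracting the bottom cell). Two small points deserve to be made explicit. First, ``generating'' must be read as \emph{freely} generating ($H_n(X)\cong H_m(Y)\cong\Z$), as the paper does in Lemma~\ref{lemma:dual2cell}; otherwise reducibility and coreducibility are impossible anyway and the dimension bookkeeping identifying $D(\S^n)$ with $\S^m$ breaks down. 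Second, the step where the homology hypotheses ``pin $D(c)$ down as the bottom-cell inclusion up to a unit'' should be justified: since $Y$ has no cells below dimension $m$, the cofibration $\S^m\to Y\to Y/\S^m$ shows that $\pi^{\bf S}_m(\S^m)\to\pi^{\bf S}_m(Y)$ is onto, so every stable map $\S^m\to Y$ is a degree-$d$ multiple of the inclusion $i$, and the fact that $D(c)$ induces an isomorphism on $H_m$ (dual to $c_*$ being an isomorphism on $H_n$) forces $d=\pm1$; the sign is then absorbed into the retraction or section exactly as you indicate. With these two clarifications your proof is complete.
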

\par
\begin{lemma}\label{lemma:coreduciblebouquet}
If $X$ is a finite $S$-coreducible cell complex with $sk_n X = \S^n$, then $X$ is stably homotopically equivalent to the bouquet $\S^n \vee (X/sk_n X)$.
\end{lemma}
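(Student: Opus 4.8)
The plan is to produce an explicit stable map $F\colon X \to \S^n \vee (X/sk_nX)$ and to verify that it induces an isomorphism on integral homology; by the Whitehead theorem (applied after suspending, so that the complexes involved are finite and simply connected) this forces $F$ to be a stable homotopy equivalence. The map $F$ will be the composite of the pinch comultiplication of $X$ with the coreduction $r\colon X\to\S^n$ on one factor and the collapse $q\colon X\to X/sk_nX$ on the other.

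First I would put the data into a normal form. Since the assertion concerns only \emph{stable} equivalence, we are free to replace $X$ by an iterated suspension $\Sigma^N X$; here $sk_{n+N}(\Sigma^N X) = \Sigma^N(sk_nX) = \S^{n+N}$ and $\Sigma^N X/sk_{n+N}(\Sigma^N X) = \Sigma^N(X/sk_nX)$, so it suffices to prove the statement for $\Sigma^N X$. Taking $N$ large we may assume that $X$ is itself a suspension, hence a co-$H$-space with a comultiplication $\mu\colon X\to X\vee X$, and that the coreduction is realized by an honest map $r\colon X\to\S^n$ whose restriction to the bottom sphere $i\colon \S^n = sk_nX \hookrightarrow X$ is a degree-one self-map of $\S^n$, i.e. $r\circ i\simeq\mathrm{id}_{\S^n}$ (a degree-one self-map of a sphere of dimension $\ge 1$ being homotopic to the identity).

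Now set $F = (r\vee q)\circ\mu\colon X\to\S^n\vee(X/sk_nX)$. Composing $F$ with the two collapse maps onto $\S^n$ and onto $X/sk_nX$ recovers $r$ and $q$ up to homotopy, so under the splitting $\tilde H_*(\S^n\vee(X/sk_nX)) = \tilde H_*(\S^n)\oplus\tilde H_*(X/sk_nX)$ the induced map $F_*$ is the pair $(r_*,q_*)$. To see this pair is an isomorphism I would run the long exact homology sequence of the pair $(X,\S^n)$, using $H_*(X,\S^n)\cong\tilde H_*(X/\S^n)$: in dimensions below $n$ all groups vanish; in dimension $n$ the identity $r\circ i\simeq\mathrm{id}$ shows $i_*$ is split injective, so the boundary into $H_n(\S^n)$ vanishes and the sequence $0\to H_n(\S^n)\xrightarrow{i_*}H_n(X)\xrightarrow{q_*}\tilde H_n(X/\S^n)\to 0$ is short exact and split by $r_*$, whence $(r_*,q_*)$ is an isomorphism there; and in dimensions above $n$ the groups $\tilde H_*(\S^n)$ vanish while $q_*\colon \tilde H_*(X)\to\tilde H_*(X/\S^n)$ is an isomorphism (again the relevant boundary vanishes because $i_*$ is injective). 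Hence $F$ is an integral homology isomorphism of finite, simply connected complexes, so it is a homotopy equivalence by Whitehead; undoing the suspension, $X$ is stably homotopy equivalent to $\S^n\vee(X/sk_nX)$.

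I expect the only real subtlety to be the normal-form step: checking that $S$-coreducibility really does let us arrange a genuine map $r$ with $r\circ i\simeq\mathrm{id}$ after suspending, and that passing to a suspension is harmless because it respects both the skeletal filtration and the formation of $X/sk_nX$; everything afterwards is bookkeeping with the long exact sequence, the only delicate point being the interplay in degree $n$ between $H_n(\S^n)=\Z$, $H_n(X)$ and $\tilde H_n(X/\S^n)$. An alternative, more formal route avoids $F$ altogether: stably the cofibration $\S^n\hookrightarrow X\to X/sk_nX$ is a distinguished triangle, $S$-coreducibility makes the inclusion of the bottom cell a split monomorphism in the stable homotopy category, and a distinguished triangle whose first map is a split monomorphism is isomorphic to the split triangle $\S^n\to\S^n\vee(X/sk_nX)\to X/sk_nX$, giving the conclusion directly.
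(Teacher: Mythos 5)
Your proposal is correct and takes essentially the same approach as the paper: assemble a map $X \to \S^n \vee (X/sk_n X)$ from the coreduction $r$ and the collapse $q$, check that it is an integral homology isomorphism, and conclude by Whitehead after suspending into the simply connected/stable range. The only cosmetic difference is how you land in the wedge: you use the pinch comultiplication of a suspension (or, in your alternative, the splitting of the stable cofibration triangle), whereas the paper maps into the product $\S^n \times (X/sk_n X)$ and compresses into the wedge using that the inclusion of the wedge into the product is $(2n-1)$-connected once $\dim X < 2n-1$ after sufficient suspension.
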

\par
\begin{proof}
The product of the retraction $r: X \to sk_n X$ and the projection $p: X \to X/sk_n X$ gives a map $f=(r,p):X \to \S^n \times (X/sk_n X)$. Replacing $X$ by its sufficiently high suspension we can assume that $\dim X <2n-1$. The inclusion $\S^n \vee (X/sk_n X) \to \S^n \times (X/sk_n X)$ is $2n-1$-connected, so in this case we can assume that $f$ maps $X$ into $\S^n \vee (X/sk_n X)$. Then $f$ induces isomorphisms of the homology groups, hence it is a homotopy equivalence.
\end{proof}

In what follows, we will encounter cell complexes such that after collapsing their unique lowest dimensional cell they become ($S$-)reducible or after omitting their unique top dimensional cell they become ($S$-)coreducible. In both of these cases, there is an associated (stable) $2$-cell complex:

{\bf Definition:} Let $X$ be a cell complex with a unique bottom cell $e^n$ such that $X/e^n$ is reducible, with the top cell $e^{n+d}$ splitting off. Then the attaching map of the cell $e^{n+d}$ to $X\setminus e^{n+d}$ can be deformed into $e^n$ -- one can lift the map $\S^{n+d} \to X/e^n$ in the definition of reducibility to a map $(D^{n+d},\partial D^{n+d}) \to (X,e^n)$. We define the \emph{$2$-cellification} of $X$ to be the $2$-cell complex $Z = D^{n+d} \underset{\alpha}{\cup} \S^n$ whose attaching map $\alpha$ is a deformation of the attaching map $\partial e^{n+d} \to X\setminus e^{n+d}$ into $e^n$.

{\bf Definition:} Let $X$ be a cell complex with a unique top cell $e^{n+d}$ such that $X\setminus e^{n+d}$ is coreducible, retracting onto the bottom cell $e^{n}$. We define the \emph{$2$-cellification} of $X$ to be the $2$-cell complex $Z = D^{n+d} \underset{\alpha}{\cup} \S^n$ whose attaching map $\alpha$ is the composition of the attaching map $\partial e^{n+d} \to X\setminus e^{n+d}$ with the retraction $X \setminus e^{n+d} \to e^n$.

Note that in both definitions there is a choice to be made: a deformation of the attaching map of the top cell into the bottom cell has to be chosen in the first definition, and a retraction onto the bottom cell has to be chosen in the second one. Making different choices can result in (even stably) different $2$-cell complexes, and any of them can be considered as possible $2$-cellifications. Next we show that taking the $S$-dual space to all possible $2$-cellifications of a complex $X$ we obtain precisely the $2$-cellifications of the $S$-dual complex $D_S[X]$.

\begin{lemma}\label{lemma:dual2cell}
Let $X$ and $Y$ be $S$-dual finite cell complexes with top and bottom homologies generated freely by the single top and bottom cells $e^{n+d}_X$, $e^n_X$, $e^{m+d}_Y$ and $e^m_Y$, respectively. Assume that in $X$, the cell $e^{n+d}_X$ is attached only to the bottom cell $e^n_X$ in a homotopically nontrivial way (in other words, $X/e^n_X$ is reducible). Then
\begin{itemize}
\item $Y \setminus e^{m+d}_Y$ admits a retraction $r$ onto $e^m_Y$ (that is, $Y \setminus e^{m+d}_Y$ is coreducible);
\item The set of stable homotopy classes of such retractions $r$ admits a bijection with the set of stable homotopy classes of maps $f : D^{n+d} \cup \S^n \to X$ that induce isomorphisms in dimensions $n$ and $n+d$ (here and later $D^p \cup \S^q$ denotes a $2$-cell complex); and
\item The $S$-duals of the $2$-cellifications of $X$ are precisely the $2$-cellifications of $Y$.
\end{itemize}
\end{lemma}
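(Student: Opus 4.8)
The plan is to work throughout in the stable homotopy category of finite complexes, where $D_S$ is a contravariant self-equivalence that carries cofibre sequences to cofibre sequences, wedges to wedges, a sphere $\S^k$ to a sphere, and the mapping cone of a map $\gamma$ between spheres to the mapping cone of $D_S\gamma$, where under the canonical identifications of the relevant stable homotopy groups $D_S\gamma$ agrees with $\gamma$ up to sign. Since the top and bottom homologies of $X$ and $Y$ are freely generated by single cells, Alexander duality pairs the bottom cell $e^n_X$ with the top cell $e^{m+d}_Y$ and the top cell $e^{n+d}_X$ with the bottom cell $e^m_Y$; consequently $D_S[\S^n] \simeq_S \S^{m+d}$, $D_S[\S^{n+d}] \simeq_S \S^m$, and $D_S$ exchanges the operation of deleting the unique top cell with that of collapsing the unique bottom cell. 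In particular, dualising the cofibre sequence $e^n_X = \S^n \hookrightarrow X \to X/e^n_X$ gives a cofibre sequence $D_S[X/e^n_X] \to Y \to \S^{m+d}$ whose second map is $D_S$ of the inclusion of the bottom cell; being a homology isomorphism on $H_{m+d}$ and killing everything below, it is, up to a self-equivalence of $\S^{m+d}$, the collapse of $Y$ onto its top cell. Hence $D_S[X/e^n_X] \simeq_S Y \setminus e^{m+d}_Y$.

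\emph{First bullet.} Because $X/e^n_X$ is reducible, a reducibility witness $\S^{n+d} \to X/e^n_X$ is, after adjusting by a degree, a homotopy section of the collapse of $X/e^n_X$ onto its top cell, so the cofibre sequence $A' \to X/e^n_X \to \S^{n+d}$ with $A' = sk_{n+d-1}(X/e^n_X)$ splits: $X/e^n_X \simeq_S A' \vee \S^{n+d}$. Applying $D_S$ and using additivity over wedges, $Y\setminus e^{m+d}_Y \simeq_S D_S[A'] \vee \S^m$, where the sphere summand is the bottom cell $e^m_Y = D_S[\S^{n+d}]$ (it carries $H_m$, while $D_S[A']$ has cells only in dimensions $>m$). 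The projection of this wedge onto the $\S^m$ summand is, after a degree correction, a retraction $Y\setminus e^{m+d}_Y \to e^m_Y$, so $Y\setminus e^{m+d}_Y$ is coreducible.

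\emph{Second and third bullets together.} To a retraction $r\colon Y\setminus e^{m+d}_Y \to e^m_Y = \S^m$ associate the $2$-cell complex $Z_Y(r) = D^{m+d}\cup_{\beta(r)}\S^m$ with $\beta(r) = r\circ b$, where $b\colon \S^{m+d-1}\to Y\setminus e^{m+d}_Y$ is the attaching map of the top cell of $Y$; this is by definition the $2$-cellification of $Y$ determined by $r$, and it carries the evident map $\rho_r\colon Y\to Z_Y(r)$ (use $r$ on $Y\setminus e^{m+d}_Y$ and extend over the top cell, possible since $\beta(r)=r\circ b$), which is an isomorphism on $H_m$ and on $H_{m+d}$. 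Dualising, $D_S[\rho_r]\colon D_S[Z_Y(r)] \to X$; by the mapping-cone property above $D_S[Z_Y(r)]$ is a $2$-cell complex $Z_X(r) := D^{n+d}\cup_{\pm\beta(r)}\S^n$ and $D_S[\rho_r]$ is an isomorphism on $H_n$ and $H_{n+d}$. Restricting $D_S[\rho_r]$ to $sk_n Z_X(r) = \S^n$ gives a degree $\pm1$ map into $e^n_X$, and collapsing $e^n_X$ then exhibits $\S^{n+d} = Z_X(r)/\S^n \to X/e^n_X$ as a reducibility witness whose associated $2$-cellification of $X$ is exactly $Z_X(r)$, with lift $D_S[\rho_r]$. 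Conversely, starting from any $2$-cellification $Z_X = D^{n+d}\cup_\alpha\S^n$ of $X$ with its lift $f_X\colon Z_X\to X$, the dual $D_S[f_X]\colon Y\to D_S[Z_X] = D^{m+d}\cup_{\pm\alpha}\S^m$ restricted to $Y\setminus e^{m+d}_Y$ lands, after cellular approximation, in the bottom cell $\S^m$ and is a homology isomorphism there, hence (after a degree correction) is a retraction $r$; comparing the images of the top-cell attaching maps under $D_S[f_X]$ gives $r\circ b \simeq \pm\alpha$, so $Z_Y(r) \simeq_S D_S[Z_X]$. Since $D_S^2 \simeq \mathrm{id}$, these two assignments are mutually inverse on stable homotopy classes: this yields the bijection between retractions and maps $D^{n+d}\cup\S^n \to X$ inducing isomorphisms in dimensions $n$ and $n+d$ (the second bullet), and the identity $D_S[Z_X(r)] \simeq_S Z_Y(r)$, together with the fact that every $2$-cellification of $Y$ arises as some $Z_Y(r)$, gives the third bullet.

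\emph{Main obstacle.} The real content is the last paragraph. The delicate points there are: making precise the set of ``stable homotopy classes of maps $D^{n+d}\cup\S^n \to X$'' when the source $2$-cell complex is itself part of the data rather than fixed, and checking that the retraction extracted from a $2$-cellification of $X$ is independent (up to stable homotopy) of the auxiliary cellular-approximation homotopies; and the careful bookkeeping that $D_S$ sends the mapping cone of a sphere map $\gamma$ to the mapping cone of a sphere map stably equal to $\pm\gamma$ and exchanges ``delete the unique top cell'' with ``collapse the unique bottom cell'', stated precisely enough that ``mutually inverse'' is unambiguous. Everything else is formal manipulation of cofibre sequences together with the homology comparisons already used in the paper.
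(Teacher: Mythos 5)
Your proposal is correct and follows essentially the same route as the paper: use $S$-duality to exchange ``collapse the bottom cell'' with ``delete the top cell'' (the paper's Lemma~\ref{lemma:restriction}), identify retractions of $Y\setminus e^{m+d}_Y$ with maps from $Y$ to a $2$-cell complex, and dualize those maps to get maps $D^{n+d}\cup\S^n\to X$ and the matching of $2$-cellifications. The only cosmetic differences are that you obtain coreducibility via a stable wedge splitting of $X/e^n_X$ and invoke the self-duality of $2$-cell complexes (the paper's Lemma~\ref{lemma:selfDual}) where the paper instead compares the cofibration $D^{n+d}\cup\S^n\to X\to sk_{n+d-1}X/e^n_X$ with its $S$-dual; the bookkeeping points you flag are left at the same level of detail in the paper's own argument.
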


\begin{proof}
First we remark that one can identify retractions $r$ of $Y \setminus e^{m+d}_Y$ onto $e^m_Y$ with maps $Y \to D^{m+d} \cup \S^m$ that induce isomorphisms of homologies in dimensions $m$ and $m+d$. Indeed, any retraction $r$ induces identity on $H_m$, and extending the retraction by a degree $1$ map of the top cell gives us a map in homology that is an isomorphism in dimensions $m$ and $m+d$. On the other hand, any map from $Y$ to a $2$-cell complex that induces an isomorphism in homology in dimensions $m$ and $m+d$ is glued together from a map $r:Y \setminus e^{m+d}_Y \to \S^m$ and a map $h:(e^{m+d}_Y,\partial e^{m+d}_Y) \to (D^{m+d},\partial D^{m+d})$ that is homotopic to the identity relative to the boundary. The map $r$ induces an isomorphism on homology in dimension $m$, so it is homotopic to a map that maps (the closure of) $e^m_Y$ onto $\S^m$ homeomorphically. Then reparametrizing $\S^m$ we may assume this homeomorphism to be identical, so $r$ is a retraction.
\par
Returning to the proof of the lemma, since $e^{n+d}_X$ is attached homotopically nontrivially only to the bottom cell $e^n_X$, there exists a map $f:D^{n+d} \cup \S^n \to X$ that maps the two cells by degree $1$ maps onto the top and the bottom cell of $X$ respectively. The mapping induced in homology by $f$ is an isomorphism in dimensions $n+d$ and $n$, so its $S$-dual $D_S(f)$ induces isomorphisms in dimensions $m$ and $m+d$. Hence the restriction of the map $D_S(f)$ to $Y\setminus e_Y^{m+d}$ gives a retraction of $Y \setminus e^{m+d}_Y$ onto the closure of $e^m_Y$. Conversely, the $S$-duals of the possible retractions $r: Y \setminus e^{m+d}_Y \to \S^m$ are maps of the form $f: D^{n+d} \cup \S^n \to X$ that induce isomorphisms in homology in dimensions $n$ and $n+d$.
\par
For any such pair of $S$-dual maps $f$ and $r$ we have the cofibration
$$
D^{n+d} \cup \S^n \overset{f}{\to} X \to sk_{n+d-1} X/e^n_X
$$
and its $S$-dual cofibration
$$
 D^{m+d} \cup \S^m \overset{r \cup h}{\leftarrow} Y=D_S[X] \leftarrow D_S[sk_{n+d-1} X/e^n_X]\overset{\dag}{=}sk_{m+d-1} Y/e^m_Y
$$
(the equality $\dag$ being implied by Lemma \ref{lemma:restriction}). Since the middle and right-hand side terms are $S$-dual in these cofibration sequences, so are the left-hand side terms. That is, the possible $2$-cellifications of $X$ and $Y$ are $S$-dual.
\end{proof}

Later we will need some further technical lemmas.

\begin{lemma}\label{lemma:selfDual}
The $S$-dual of a $2$-cell complex is also a $2$-cell complex, with the same stable attaching map (up to sign).
\end{lemma}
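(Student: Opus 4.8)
The plan is to realise a $2$-cell complex as the mapping cone of its attaching map, dualise the resulting cofiber sequence, and then identify the dual attaching map with the original one via the $S$-self-duality of the $0$-sphere. Throughout, the statement is understood stably: a "$2$-cell complex" means an object stably equivalent to some $D^{n+d}\cup_\alpha \S^n$, and its "stable attaching map" is the class of $\alpha$ in the appropriate stable stem.

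First I would recall the formalism. Up to homotopy, $Z=D^{n+d}\cup_\alpha \S^n$ is the mapping cone of $\alpha:\S^{n+d-1}\to \S^n$, so it sits in the Puppe cofiber sequence
$$\S^{n+d-1}\xrightarrow{\ \alpha\ }\S^n\hookrightarrow Z\xrightarrow{\ q\ }\S^{n+d}\xrightarrow{\Sigma\alpha}\S^{n+1}\to\cdots.$$
Spanier--Whitehead duality $D_S$ is a contravariant additive functor on the stable category that sends a sphere to a sphere of complementary dimension and sends cofiber sequences to cofiber sequences (with the usual sign and suspension conventions). Applying $D_S$ to the three-term piece $\S^{n+d-1}\xrightarrow{\alpha}\S^n\to Z$, and using $D_S\S^j\simeq \S^{c-j}$ for the constant $c$ determined by the ambient embedding dimension, I get a cofiber sequence
$$D_S Z\to \S^{c-n}\xrightarrow{D_S\alpha}\S^{c-n-d+1}.$$
By Alexander duality the reduced homology of $D_S Z$ is free and concentrated in exactly two degrees, whose difference is again $d$ (duality interchanges the roles of the top and the bottom cell but preserves the gap); equivalently the displayed sequence exhibits $D_S Z$, after one desuspension, as the mapping cone of $D_S\alpha:\S^{c-n}\to\S^{c-n-d+1}$. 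Hence $D_S Z$ has the stable homotopy type of a $2$-cell complex whose stable attaching map is (a suspension of) $D_S\alpha$.

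The heart of the matter is the identification of $D_S\alpha$ with $\alpha$. Both $\alpha\in\pi_{n+d-1}(\S^n)$ and $D_S\alpha\in\pi_{c-n-1}(\S^{c-n-d})$ (after desuspending once) represent, under the canonical stabilisation isomorphisms, classes in the single stable stem $\pi^{\bf S}_{d-1}$, and the claim is that these classes agree up to sign. This is the classical fact that $D_S$ induces $\pm\mathrm{id}$ on $\pi^{\bf S}_*=\pi^{\bf S}_*(\S^0)$, which follows from $\S^0$ being canonically $S$-self-dual with duality pairing the identity $\S^0\wedge\S^0=\S^0$, so that $D_S$ commutes with smashing against identity spheres up to the transposition (contributing only Koszul signs). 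I would either invoke this from a standard treatment of Spanier--Whitehead duality (e.g. \cite{Switzer}) or spell out the one-line argument. Combining with the previous paragraph yields that $D_S Z$ is a $2$-cell complex with the same stable attaching map as $Z$, up to sign.

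I expect the main obstacle to be bookkeeping rather than mathematics: keeping the suspensions, desuspensions and the duality constant $c$ mutually consistent, and being honest that the unstable attaching map is not even well defined at the level of the choices involved, so that the proof genuinely lives in the stable category. The one substantive input is the sign-compatibility of $D_S$ with the stable stems used in the third step; here I would be careful to fix, once and for all, compatible orientations (generators) of the two cells of $Z$ and of the two cells of $D_S Z$, since a change of those conventions is exactly what accounts for the "up to sign" in the statement.
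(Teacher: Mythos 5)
Your argument is correct, but it is organized differently from the paper's proof, essentially trading the paper's hands-on space-level constructions for two standard stable-category facts. Both you and the paper start by dualizing the cofibration $\S^n \to Z \to \S^{n+d}$; after that the routes diverge. The paper, working with (sufficiently suspended) spaces rather than in the stable category, must first manufacture the $2$-cell structure on the dual by hand: it attaches a cell along a representative of the generator of $H_{m+d}(B'/\S^m)$ and uses the five-lemma plus Whitehead's theorem to get a homotopy equivalence $B \to B' = D_S[A]$; you get this for free by rotating the dualized cofiber sequence, which exhibits $D_S Z$ (up to a shift) as the mapping cone of $D_S\alpha$. For the identification of the attaching map, the paper never invokes the statement that $D_S$ acts as $\pm\mathrm{id}$ on $\pi^{\bf S}_*$; instead it compares the boundary map of the stable homotopy exact sequence of $(A,\S^n)$ with the coboundary of the stable cohomotopy sequence of $(B,\S^m)$ under duality, and then proves by an explicit Puppe-sequence/cone computation that this coboundary sends the generator to the suspension of the attaching map of $B$ --- in effect proving by hand the special case of the compatibility that you cite wholesale. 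So your version is shorter and cleaner but leans on a classical external input (which is legitimately citable, e.g.\ from standard treatments of Spanier--Whitehead duality), while the paper's version is self-contained at the level of cell complexes, which fits its later use of the lemma where the choices and signs in $2$-cellifications are tracked explicitly. One small inaccuracy in your write-up: the parenthetical claim that the reduced homology of $D_S Z$ is free and concentrated in two degrees fails when $d=1$ and $\alpha$ has degree $q$ with $|q|\geq 2$ (then $Z$ is a stable Moore space); the paper sidesteps the degenerate $d=1$, degree $\pm 1$ case explicitly. This does not affect your proof, since the rotation of the dual cofiber sequence, not the homology computation, is what actually produces the $2$-cell structure.
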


\begin{proof}
Let $A$ be the $2$-cell complex in question, with cells in dimensions $n$ and $n+d$, and denote by $B'=D_S[A]$ its $S$-dual (sufficiently stabilized for all maps in the following argument to be in the stable range). Then there is a cofibration $\S^n \to A \to \S^{n+d}$, and $S$-duality takes it to a cofibration $\S^m \to B' \to \S^{m+d}$ (sufficiently stabilized for the maps of the proof to exist). The statement of the lemma is trivial if $d=1$ and the attaching map of $A$ is homotopic to a homeomorphism, so we assume that this is not the case, in particular, $A$ has nontrivial homology in dimension $n$. 
\par
We now construct a $2$-cell complex $B$ with cells in dimensions $m$ and $m+d$ as well as a map $f: B \to B'$ that will turn out to be a homotopy equivalence. Take a map $h:(D^{m+d}, \partial D^{m+d}) \to (B',\S^m)$ that represents a generator of $H_{m+d}(B'/\S^m) \cong H_{m+d}(\S^{m+d})$. We define $B$ as the $2$-cell complex obtained by attaching $D^{m+d}$ to $\S^m$ along $h|_{\partial D^{m+d}}$; the map $h$ extends to a map $f:B \to B'$. Comparing the long exact sequences in homology of the pairs $(B',\S^m)$ and $(B,\S^m)$, we see that $f$ induces isomorphisms of $H_*(\S^m)$ and $H_*(\S^{m+d})$ by construction, hence the $5$-lemma implies that $H_*(f) : H_*(B) \to H_*(B')$ is also an isomorphism in all dimensions. By Whitehead's theorem, $f$ must be a homotopy equivalence. Hence the $S$-dual of the $2$-cell complex $A$ is the $2$-cell complex $B$.
\par
It remains to show that the attaching map of $B$ is stably homotopic to that of $A$ (or its opposite, depending on the choice of orientations of the cells). Consider the long exact sequence of the stable homotopy groups of the cofibration $\S^n \to A \to \S^{n+d}$:
$$
\dots \to \pi^{\bf S}_{n+d}(A,\S^n) \cong \pi^{\bf S}_{n+d}(\S^{n+d}) \cong \Z \overset{\partial}{\to} \pi^{\bf S}_{n+d-1}(\S^n) \cong \pi^{\bf S}(d-1) \to \dots
$$
The boundary map $\partial$ takes the generator of $\Z$ to the attaching map of $A$. Taking $S$-duals we obtain the long exact sequence of the stable cohomotopy groups of the cofibration $\S^m \to B \to \S^{m+d}$:
$$
\dots \to \pi_s^m(\S^m) \overset{\delta}{\to} \pi_s^{m+1}(B,\S^m) \cong \pi_s^{m+1}(\S^{m+d}) \cong \pi^{\bf S}(d-1) \to \dots
$$
Since the $S$-duality establishes an isomorphism between these two sequences, the boundary map $\partial$ and the coboundary map $\delta$ coincide. It is hence enough to show that $\delta$ maps the generator of $\Z$ to the attaching map of $B$.
\par
In order to show this, consider the Puppe sequence of the cofibration $\S^m \to B \to \S^{m+d}$, which involves the coboundary map $\delta$:
$$
\S^m \to B \to B \cup Cone(\S^m) \sim \S^{m+d} \overset{\delta(id_{\S^{m+d}})}{\longrightarrow} Cone(B) \cup Cone(\S^m) \sim \S^{m+1} \to \dots
$$
Denote by $\beta : \S^{m+d-1} \to \S^m$ the attaching map of $B$. We claim that $\delta(id_{\S^{m+d}})$ is homotopic to the suspension $S\beta$. Indeed, denote by $g$ the following map of the sphere $\S^{m+d}$ into $B \cup Cone(\S^m)$: on the top hemisphere $g$ is a homeomorphism onto the top cell of $B$, and on the bottom hemisphere $g$ is the map $Cone(\beta): Cone(\S^{m+d-1}) \to Cone(\S^m)$. On the equator the two partial maps coincide and hence $g$ is a (homotopically) well-defined map. On the top cell of $B$ this map is $1$-to-$1$, so after identifying $B \cup Cone(\S^m)$ with $\S^{m+d}$ the map $g$ becomes a degree $1$ self-map of $\S^{m+d}$ and is hence a homotopy equivalence. Consequently the composition of $g$ with the collapse of $B$ in $B \cup Cone(\S^m)$ is homotopic to the map $\delta(id_{\S^{m+d}})$ from the Puppe sequence. But this composition map is the quotient of the bottom hemisphere map $Cone(\beta)$ after collapsing the boundary $\S^{m+d-1}$ in the source and the boundary $\S^m$ in the target, so it is the suspension $S\beta$, proving our claim.
\end{proof}

\par

\begin{lemma}\label{lemma:restriction}
Let $X$ and $Y$ be $S$-dual finite cell complexes, with the single top cell $e_X$ of $X$ generating the top homology of $X$ and the single bottom cell $e_Y$ of $Y$ generating the bottom homology $H_m(Y)$ of $Y$. Then $X \setminus int~ e_X$is $S$-dual to the quotient space $Y/\overline{e_Y}$. (Informally, omitting the top cell is $S$-dual to contracting the bottom cell.)
\end{lemma}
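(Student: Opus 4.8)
The plan is to realize $X \setminus \mathrm{int}\, e_X$ as the $S$-dual of the mapping cone of a map $\S^m \to Y$, and then to recognize that mapping cone as $Y/\overline{e_Y}$. Let $p$ denote the dimension of the unique top cell $e_X$. Then $A := X \setminus \mathrm{int}\, e_X = sk_{p-1}X$ is a subcomplex, the inclusion $A \hookrightarrow X$ is a cofibration, and, since $X = A \cup e_X$, the quotient is $X/A \cong \S^p$. Dually, since $e_Y$ is the unique bottom cell of $Y$, its closure $\overline{e_Y}$ is a subcomplex isomorphic to $\S^m$, the inclusion $\overline{e_Y} \hookrightarrow Y$ is a cofibration, and $Y/\overline{e_Y}$ is $m$-connected because all of its cells have dimension $>m$.

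First I would apply $S$-duality to the cofibration $A \to X \to \S^p$. Spanier--Whitehead duality of finite complexes is a contravariant self-equivalence of the stable category that takes cofibre sequences to cofibre sequences and spheres to spheres, so it yields a cofibre sequence $\S^m \overset{g}{\to} Y \to D_S[A]$ in which $D_S[X] = Y$, $D_S[\S^p] \simeq \S^m$ (the dimension $m$ is the one paired with $p$ by the duality, as $e_X$ and $e_Y$ are dual cells), and $g := D_S[q]$ is the $S$-dual of the collapse $q : X \to X/A = \S^p$. Hence $D_S[A]$ is the mapping cone of $g$, and it suffices to show that $g$ agrees, up to sign, with the bottom-cell inclusion $\iota : \overline{e_Y} = \S^m \hookrightarrow Y$. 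Granting this, $D_S[A] \simeq \mathrm{cone}(g) \simeq \mathrm{cone}(\iota) = Y/\overline{e_Y}$; and since $S$-duality is symmetric and preserved by stable homotopy equivalence, this is precisely the assertion that $X\setminus\mathrm{int}\,e_X$ is $S$-dual to $Y/\overline{e_Y}$.

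To identify $g$ with $\pm\iota$ I would argue homologically. The collapse $q$ is an isomorphism on $H_p$, the top homology of $X$ (generated by $e_X$), so by the naturality of $S$-duality with respect to (co)homology its dual $g$ is an isomorphism on $H_m$; the inclusion $\iota$ is likewise an isomorphism on $H_m$, since $e_Y$ generates $H_m(Y)$. Now from the stable cofibre sequence $\S^m \overset{\iota}{\to} Y \to Y/\overline{e_Y}$ and the $m$-connectivity of $Y/\overline{e_Y}$ we get that $\iota_* : \pi^{\bf S}_m(\S^m) = \Z \to \pi^{\bf S}_m(Y)$ is surjective, so $\pi^{\bf S}_m(Y)$ is cyclic with generator $\iota$; and $\iota$ has infinite order, since its stable Hurewicz image is a generator of $H_m(Y) \cong \Z$, whence the Hurewicz homomorphism is injective on $\pi^{\bf S}_m(Y)$. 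As $g$ and $\iota$ have the same Hurewicz image up to sign, this forces $g = \pm\iota$, and precomposing with a degree $-1$ self-homeomorphism of $\S^m$ identifies the two mapping cones.

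I expect the main obstacle to be exactly this last step: excluding the possibility that $g$ is some map $\S^m \to Y$ which is a homology isomorphism in degree $m$ without being stably homotopic to $\iota$. The hypothesis that $e_Y$ is the \emph{unique} cell of least dimension enters here essentially, as it is what makes $Y/\overline{e_Y}$ highly connected and therefore $\pi^{\bf S}_m(Y)$ cyclic on $\iota$. The other ingredients --- $A = sk_{p-1}X$, $X/A \cong \S^p$, $\overline{e_Y}\cong\S^m$, and the facts that $S$-duality dualizes cofibrations and matches (co)homology --- are standard Spanier--Whitehead duality (already used above and in \cite{Husemoller}), so I would simply invoke them.
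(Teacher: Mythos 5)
Your proof is correct, and while it starts from the same cofibration $X\setminus \mathrm{int}\,e_X \hookrightarrow X \to X/(X\setminus \mathrm{int}\,e_X)=\S^p$ as the paper, the key identification is carried out differently. The paper dualizes the \emph{inclusion}: it takes $D_S[i]\colon Y \to D_S[X\setminus \mathrm{int}\,e_X]$, observes that the target is $m$-connected so that $D_S[i]$ can be homotoped to crush $\overline{e_Y}$, and then the factored map $Y/\overline{e_Y}\to D_S[X\setminus \mathrm{int}\,e_X]$ is a homology isomorphism, hence an equivalence by Whitehead's theorem. You dualize the \emph{collapse} $q\colon X\to \S^p$ instead, i.e.\ you work with the other leg $g=D_S[q]\colon \S^m\to Y$ of the dual cofibre sequence, and you pin $g$ down as $\pm\iota$ by showing $\pi^{\bf S}_m(Y)$ is cyclic on $[\iota]$ (from the $m$-connectivity of $Y/\overline{e_Y}$) and that the stable Hurewicz homomorphism is injective on it; uniqueness of cofibres of stably homotopic maps then finishes. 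The trade-off: the paper's route needs only homology, connectivity and Whitehead, with no reference to $\pi^{\bf S}_m(Y)$ or Hurewicz; yours is slightly more informative, since it identifies the $S$-dual of the collapse map as the bottom-cell inclusion up to sign, which is close to how the lemma is actually exploited later (e.g.\ in Lemma \ref{lemma:dual2cell}). One small point to make explicit: your Hurewicz step uses $H_m(Y)\cong\Z$, whereas the statement only says $e_Y$ generates $H_m(Y)$; freeness is the intended reading (compare the wording of Lemmas \ref{lemma:dual2cell} and \ref{lemma:multiplication}), and in any case it follows from the hypotheses, since the cyclic group $H_m(Y)$ is by duality isomorphic to $H^p(X)$, which contains $\operatorname{Hom}(H_p(X),\Z)\cong\Z$ as a direct summand and so must itself be infinite cyclic.
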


\begin{proof}
Let $i:X \setminus int~ e_X \to X$ be the inclusion. The map $i_*$ induced in homology by $i$ is an isomorphism in all dimensions except the top one, where it is $0$. Hence the $S$-dual of $i$ is a map $D_S[i]: Y \to D_S[X \setminus int~ e_X]$ that induces isomorphisms in all homology groups except the bottom one, where it is $0$. This means that the space $D_S[X \setminus int~ e_X]$ is $m$-connected and after a homotopy, we may assume that the map $D_S[i]$ maps $e_Y$ to a single point. Consequently, $D_S[i]$ factors through the contraction of the bottom cell $e_Y$ and yields a map $Y/\overline{e_Y} \to D_S[X \setminus int~ e_X]$ that induces an isomorphism in all homologies. Hence $D_S[X \setminus int~ e_X]$ is homotopy equivalent to $Y/\overline{e_Y}$, as claimed.
\end{proof}

\par

\begin{lemma}\label{lemma:multiplication}
Let $X$ be a finite cell complex with a single top cell in dimension $n$ that freely generates $H_n(X)$, and denote by $Y$ its $S$-dual that has a single bottom cell in dimension $m$ which freely generates $H_m(Y)$. Let $\tilde X$ denote the cell complex which has the same $n-1$-skeleton as $X$ and has a single $n$-cell whose attaching map is $q$ times the attaching map of the top cell in $X$. Similarly, let $\tilde Y$ be the quotient of $Y$ by a degree $q$ map of its bottom cell. Then $\tilde X$ and $\tilde Y$ are $S$-dual.
\end{lemma}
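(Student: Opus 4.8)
The plan is to exhibit $\tilde X$ through a cofibration sequence built from $X$ and a Moore space, apply Spanier--Whitehead duality, and recognise the resulting cofibration sequence as the one that presents $\tilde Y$. I use throughout that $D_S$ is a contravariant self-equivalence of the stable category of finite complexes which carries cofibration sequences to cofibration sequences; this is the framework already operative in Lemmas \ref{lemma:restriction} and \ref{lemma:selfDual}, which in addition record precisely how $D_S$ treats $2$-cell Moore spaces and how it interchanges (skeletal) inclusions with cell-collapses.

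Write $W=sk_{n-1}X$ and let $\alpha\colon\S^{n-1}\to W$ be the attaching map of the top cell, so that $X=W\cup_\alpha e^n$ and $\tilde X=W\cup_{q\alpha}e^n$; write $\varepsilon_q\colon\S^j\to\S^j$ for a self-map of degree $q$. Since $q\alpha=\alpha\circ\varepsilon_q$, comparing the mapping cones of $\alpha$ and of $\alpha\circ\varepsilon_q$ (the octahedral axiom for $\S^{n-1}\xrightarrow{\varepsilon_q}\S^{n-1}\xrightarrow{\alpha}W$) gives a cofibration sequence
$$
M(\Z/q,n-1)\longrightarrow\tilde X\longrightarrow X\xrightarrow{\ \phi\ }M(\Z/q,n),
$$
with $M(\Z/q,j)=\S^j\cup_{\varepsilon_q}e^{j+1}$ the mod $q$ Moore space; chasing the octahedron shows that $\phi$ is the composite $X\to X/sk_{n-1}X=\S^n\hookrightarrow M(\Z/q,n)$ of the collapse onto the top cell with the inclusion of the bottom cell of the Moore space.

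Dualising: the top cell of $X$ (in dimension $n$) is $S$-dual to the bottom cell $\overline{e_Y}$ of $Y$ (in dimension $m$), so by Lemma \ref{lemma:selfDual} the $S$-dual of $M(\Z/q,n)$ is $M(\Z/q,m-1)$, and by Lemma \ref{lemma:restriction} the $S$-dual of a collapse onto a top cell is the inclusion of the corresponding bottom cell, and conversely. Hence the sequence above dualises to a cofibration sequence
$$
M(\Z/q,m-1)\xrightarrow{\ D_S\phi\ }Y\longrightarrow D_S[\tilde X],
$$
in which $D_S\phi$ is the composite $M(\Z/q,m-1)\to M(\Z/q,m-1)/\S^{m-1}=\S^m\hookrightarrow Y$ of the collapse onto the top cell of the Moore space with the inclusion of $\overline{e_Y}$. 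Now the octahedral axiom for $M(\Z/q,m-1)\to\S^m\hookrightarrow Y$, together with the fact that the connecting map $\S^m\to\S^m$ of the cofibration $\S^{m-1}\hookrightarrow M(\Z/q,m-1)\to\S^m$ has degree $q$, shows that $D_S[\tilde X]$ sits in a cofibration sequence $\S^m\to D_S[\tilde X]\to Y/\overline{e_Y}$ whose connecting map is $q\partial$, where $\partial\colon Y/\overline{e_Y}\to\S^{m+1}$ is the connecting map of the cofibration $\overline{e_Y}\hookrightarrow Y\to Y/\overline{e_Y}$. On the other hand, the quotient map $Y\to\tilde Y$ that defines $\tilde Y$ is $\varepsilon_q$ on $\overline{e_Y}$, is a homeomorphism off $\overline{e_Y}$, and therefore gives a map of cofibration sequences $(\overline{e_Y}\hookrightarrow Y\to Y/\overline{e_Y})\to(\overline{e_{\tilde Y}}\hookrightarrow\tilde Y\to Y/\overline{e_Y})$ which is the identity on the third term; comparing connecting maps gives $\tilde\partial=(\Sigma\varepsilon_q)\circ\partial=q\partial$, so $\tilde Y$ too sits in the cofibration sequence $\S^m\to\tilde Y\to Y/\overline{e_Y}$ with connecting map $q\partial$. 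A space fitting in such a cofibration sequence is determined up to equivalence by its connecting map, so $D_S[\tilde X]\simeq\tilde Y$.

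The main obstacle is bookkeeping of maps rather than of spaces: one must track the \emph{particular} attaching, collapsing and inclusion maps --- not just the underlying complexes --- through $D_S$, which requires the map-level content of Lemmas \ref{lemma:restriction} and \ref{lemma:selfDual} and a careful reading of the octahedron that produces $\phi$. Once $\phi$ and its dual are pinned down, everything else is formal; as a consistency check one may note that both $D_S[\tilde X]$ and $\tilde Y$ have the homology of $Y$ (multiplying an attaching map by $q$ keeps the cellular boundary zero), so that, once a comparison map is in hand, Whitehead's theorem applies just as in the proofs of Lemmas \ref{lemma:restriction} and \ref{lemma:selfDual}.
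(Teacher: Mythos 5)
Your argument is correct, but it reaches the conclusion by a genuinely different route than the paper. The paper works with the same degree-$q$ map $\tilde X \to X$ that sits inside your first octahedron, but then proceeds concretely: it dualizes that single map to get $\psi\colon Y \to D_S[\tilde X]$, normalizes the cell structure of $D_S[\tilde X]$ (no cells below dimension $m$, a single $m$-cell), observes that $\psi$ must have degree $q$ on the bottom cell and hence factors through the quotient map $r\colon Y \to \tilde Y$, and finishes with homology and Whitehead's theorem. You instead never build a comparison map at all: you present $\tilde X$ as the fibre of the cofibration $X \to M(\Z/q,n)$, dualize the whole cofibration sequence, and identify both $D_S[\tilde X]$ and $\tilde Y$ as cofibres of the same map $q\partial$ via two applications of the octahedral axiom, so the middle term is determined by its connecting map. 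Your version buys a cleaner, purely triangulated argument with no cell-structure manipulation of the dual; its cost is exactly the point you flag yourself, namely that you invoke map-level refinements of Lemmas \ref{lemma:restriction} and \ref{lemma:selfDual} (that the dual of the collapse $X \to X/sk_{n-1}X$ is, up to sign, the bottom-cell inclusion $\S^m \hookrightarrow Y$, and that the dual of the bottom-cell inclusion of the Moore space is its top-cell collapse under a suitable identification) which those lemmas only state space-level. These refinements are true and easy to supply -- for the first, since $Y$ has no cells below dimension $m$ the stable Hurewicz map gives $\pi^{\bf s}_m(Y) \cong H_m(Y)$, so a map $\S^m \to Y$ inducing an isomorphism on $H_m$ is stably homotopic to the bottom-cell inclusion up to sign; for the second, compare the dualized cofibration $\S^{m-1} \to D_S[M(\Z/q,n)] \to \S^m$ with the standard one, whose connecting maps agree up to sign -- and the residual sign ambiguities are harmless because cofibres of $q\partial$ and $-q\partial$ agree. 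With that one-line justification added, your proof is complete and at the same level of rigor as the paper's.
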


\begin{proof}
Denote by $D_S[\tilde X]$ the $S$-dual of $\tilde X$. Since there is a map $\phi: \tilde X \to X$ that has degree $q$ on the top cell and is a homeomorphism on the $n-1$-skeleton, its $S$-dual is a map $\psi: Y \to D_S[\tilde X]$ that induces an isomorphism on all homology groups except the $m$-dimensional one, where it is the multiplication by $q$. Since $D_S[\tilde X]$ can be chosen to be simply connected and has vanishing homology in dimensions $1$ to $m-1$, it is $m-1$-connected and without loss of generality we may assume that it does not contain cells of dimension $m-1$ or less. Similarly, we can assume that $D_S[\tilde X]$ has a single cell of dimension $m$, and the map $\psi$ sends the bottom cell of $Y$ into the bottom cell of $D_S[\tilde X]$. Since $\psi$ restricted to the bottom cell of $Y$ has to be a degree $q$ map, $\psi$ factorizes through the obvious map $r:Y \to \tilde Y$ and there exists a map $\hat \psi: \tilde Y \to D_S[\tilde X]$ such that $\psi = \hat \psi \circ r$. The map on homology induced by $\hat \psi$ is clearly an isomorphism in all dimensions above $m$, and since $r$ and $\psi$ both induce the multiplication by $q$ on $H_m$, $H_m(\hat \psi)$ is also an isomorphism. By Whitehead's theorem, $\hat \psi$ is a homotopy equivalence between $\tilde Y$ and $D_S[\tilde X]$, proving the claim of the lemma.
\end{proof}

\par

\section{Periodicity}\label{section:periodicity}

We define the Atiyah-Todd number $M_k$ to be the order of $\gamma_{k-1}$ in the group $J(\C P^{k-1})$ (see \cite{AtiyahTodd}, \cite{AdamsWalker}). It can be computed as follows: let $p^{\nu_p(r)}$ be the maximal power of the prime $p$ that divides the positive integer $r$. Then $M_k = \prod_{p \text{ prime}} p^{\nu_p(M_k)}$ and the exponents satisfy the formula
$$
\nu_p(M_k) = \max \left\{ r+\nu_p(r) : 1 \leq r \leq \left\lfloor \frac{k-1}{p-1} \right\rfloor \right\}.
$$
For $k \leq 6$ the numbers $M_k$ are the following:
\par
\begin{tabular}{c|c|c|c|c|c|c}
k & 1 & 2 & 3 & 4 & 5 & 6 \\
\hline
$M_k$ & 1 & 2 & $2^3 \cdot 3$ & $2^3 \cdot 3$ & $2^6\cdot 3^2\cdot 5$ & $2^6\cdot 3^2\cdot 5$
\end{tabular}
\par
\noindent
Note that $M_{k+1}$ is a multiple of $M_k$ for any $k$.
\par
Mosher proved that the spectral sequence of Subsection \ref{section:CPSS} is periodic with period $M_k$ in the following sense:

\begin{prop}\label{thm:periodicity}{\text {\cite[Proposition 4.4]{Mosher}}}
If $r \leq k-1$ then $E^r_{i,j} \cong E^r_{i+M_k,j+M_k}$. Moreover, the isomorphism $E^{k-1}_{i,j} \cong E^{k-1}_{i+M_k,j+M_k}$ commutes with the differential $d^{k-1}$.
\end{prop}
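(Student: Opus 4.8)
The plan is to deduce the periodicity directly from James periodicity of stunted complex projective spaces, which is precisely the content of $M_k$ being the order of $\gamma_{k-1}$ in $J(\C P^{k-1})$ (\cite{AtiyahTodd}, \cite{AdamsWalker}). First I would record the form of this periodicity that we need: since $M_k$ annihilates $\gamma_{k-1}-\varepsilon^2$ in $\tilde J(\C P^{k-1})$, for every $j\le k-1$ the bundle $M_k\gamma_j$ over $\C P^j$ is stably fibre homotopy trivial, so $(m+M_k)\gamma_j$ is stably fibre homotopy equivalent to $m\gamma_j\oplus\varepsilon^{2M_k}$ for all $m\ge 0$. Passing to Thom spaces and using $\C P^{a}/\C P^{a-j-1}\cong T((a-j)\gamma_j)$, this gives stable homotopy equivalences
\[
\Sigma^{2M_k}\bigl(\C P^{a}/\C P^{a-j-1}\bigr)\ \simeq\ \C P^{a+M_k}/\C P^{a+M_k-j-1}\qquad(0\le j\le k-1),
\]
and, being induced by restrictions of one bundle isomorphism over $\C P^j$ to subspaces of the base, they are natural with respect to the inclusions of bottom skeleta, the collapses onto top coskeleta, and the associated cofibre sequences. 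In short: every stunted projective space with at most $k$ cells is $M_k$-periodic, compatibly with its skeletal and coskeletal cofibre sequences.

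Next I would recall the standard description of the $E^r$-term of the skeletal spectral sequence of the filtration $\{\C P^n\}$ in $\pi^{\bf S}_*$. With $X_p=\C P^p$ and $X_{-1}=\mathrm{pt}$ one has $X_p/X_{p-1}=\S^{2p}$, $E^1_{p,q}=\pi^{\bf S}_{p+q}(\S^{2p})=\pi^{\bf S}(q-p)$, and for general $r$,
\[
E^r_{p,q}=Z^r_{p,q}/B^r_{p,q},\qquad Z^r_{p,q}=\operatorname{im}\!\bigl(\pi^{\bf S}_{p+q}(\C P^{p}/\C P^{p-r})\to\pi^{\bf S}_{p+q}(\S^{2p})\bigr),
\]
the map being the collapse onto the top cell, while $B^r_{p,q}$ is the image under the connecting homomorphism $\partial\colon\pi^{\bf S}_{p+q+1}(\C P^{p+r-1}/\C P^{p})\to\pi^{\bf S}_{p+q}(\S^{2p})$ of the cofibre sequence $\S^{2p}\to\C P^{p+r-1}/\C P^{p-1}\to\C P^{p+r-1}/\C P^{p}$. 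For $r\le k-1$ all finite complexes occurring here have at most $r\le k-1$ cells and are hence $M_k$-periodic by the first step; moreover the periodicity equivalences are compatible with the collapse and connecting maps to and from the common bottom cell $\S^{2p}$, on which they reduce to the suspension isomorphism $\pi^{\bf S}_{p+q}(\S^{2p})\cong\pi^{\bf S}_{p+q+2M_k}(\S^{2p+2M_k})$ (the same one regardless of the ambient complex). They therefore carry $Z^r_{p,q}$ isomorphically onto $Z^r_{p+M_k,q+M_k}$ and $B^r_{p,q}$ onto $B^r_{p+M_k,q+M_k}$ inside $\pi^{\bf S}(q-p)=\pi^{\bf S}\bigl((q+M_k)-(p+M_k)\bigr)$, which yields $E^r_{p,q}\cong E^r_{p+M_k,q+M_k}$ for $r\le k-1$.

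For the last assertion, recall that $d^{k-1}\colon E^{k-1}_{p,q}\to E^{k-1}_{p-k+1,q+k-2}$ is computed by lifting a class from $\pi^{\bf S}(\S^{2p})$ to $\pi^{\bf S}(\C P^{p}/\C P^{p-k+1})$ and then applying the connecting homomorphism $\pi^{\bf S}(\C P^{p}/\C P^{p-k+1})\to\pi^{\bf S}(\S^{2(p-k+1)})$ of the cofibre sequence $\S^{2(p-k+1)}\to\C P^{p}/\C P^{p-k}\to\C P^{p}/\C P^{p-k+1}$. The largest complex intervening, $\C P^{p}/\C P^{p-k}$, has exactly $k$ cells and is therefore still $M_k$-periodic, naturally in this cofibre sequence and in the collapse onto the top cell $\S^{2p}$. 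Hence the periodicity isomorphisms constructed above intertwine $d^{k-1}$ at $(p,q)$ with $d^{k-1}$ at $(p+M_k,q+M_k)$, as required.

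I expect the main obstacle to be the bookkeeping rather than any single deep point, and I would flag it explicitly in the write-up. First, one must pin down precisely that $E^r$ for $r\le k-1$ and the differential $d^{k-1}$ genuinely depend only on finite subquotient complexes of $\C P^\infty$ with at most $k$ cells; this is the heart of why the statement is sharp at $d^{k-1}$, since $d^{k}$ would already involve the $(k+1)$-cell space $\C P^{p}/\C P^{p-k-1}$, which is periodic only modulo $M_{k+1}$, a possibly proper multiple of $M_k$. Second, one must check that the James periodicity equivalences are natural with respect to all the collapse, inclusion and connecting maps that occur, so that the many local identifications fit together into isomorphisms of the whole $E^r$-page; this is where care is needed, but it should follow formally once the periodicity equivalences are set up as coming from a single bundle isomorphism over $\C P^{k-1}$ restricted to subspaces.
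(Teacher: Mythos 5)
Your proposal is correct and follows essentially the same route as the paper: both arguments rest on $J(M_k\gamma_j)=0$ for $j\le k-1$ (James periodicity), giving stable equivalences $\Sigma^{2M_k}\bigl(\C P^a/\C P^{a-j-1}\bigr)\simeq \C P^{a+M_k}/\C P^{a+M_k-j-1}$, combined with the same $Z^r/B^r$ description of $E^r$ showing that the pages with $r\le k-1$ and the differential $d^{k-1}$ only involve stunted projective spaces with at most $k$ cells. The compatibility/bookkeeping point you flag is treated just as briskly in the paper itself.
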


In particular, $d^1$ is $(2,2)$-periodic, $d^2$ and $d^3$ are $(24,24)$-periodic, etc. 

In the proof, we use the groups $J(X)$ that are defined as follows.

{\bf Definition:} For a topological space $X$, let $J(X)$ be the set of stable fiberwise homotopy equivalence classes of sphere bundles $S\xi$, where $\xi$ is a vector bundle over $X$. In particular, if for bundles $\xi$ and $\zeta$ the represented classes $[\xi]$ and $[\zeta]$ in $J(X)$ coincide, then the Thom spaces $T\xi$ and $T\zeta$ are stably homotopically equivalent. The natural surjective map $K(X) \to J(X)$ is compatible with the addition of vector bundles and hence transfers the abelian group structure of $K(X)$ onto $J(X)$ (see \cite{Husemoller}). Note that for any finite cell complex $X$ the group $J(X)$ is finite.

\begin{proof}
The definition of the first $(k-1)$ pages of the spectral sequence of the filtration $\C P^0 \subset \C P^1 \subset \dots \subset \C P^i \subset \dots$ is constructed using the relative stable homotopy groups $\pi^{\bf S}_*(\C P^m/\C P^l)$ (where $0\leq m-l \leq k$) and homomorphisms between these groups induced by inclusions between pairs $(\C P^m,\C P^l)$. We show that all these groups remain canonically isomorphic if we replace $\pi^{\bf S}_q(\C P^m,\C P^l)$ by $\pi^{\bf S}_{q+2M_k}(\C P^{m+M_k},\C P^{l+M_k})$.
\par
It is well-known that the quotient $\C P^m/\C P^l$ is the Thom space of the bundle $(l+1)\gamma_{m-l-1}$. Similarly $\C P^{m+M_k}/\C P^{l+M_k}$ is the Thom space of the bundle $(l+1+M_k)\gamma_{m-l-1}$. Since $M_k$ is the order of $J(\gamma_{k-1})$ in $J(\C P^{k-1})$, and that is obviously a multiple of the order of $J(\gamma_{m-l-1})$ in $J(\C P^{m-l-1})$ if $m-l-1 \leq k-1$, one has $J(M_k\gamma_{m-l-1})=0$. The homomorphism $\xi \mapsto J(\xi)$ is compatible with the sum of bundles, hence for any bundle $\xi$ over $\C P^{m-l-1}$ the Thom spaces $T(\xi \oplus M_k\gamma_{m-l-1})$ and $T(\xi \oplus M_k\varepsilon^1) = S^{2M_k}T\xi$ represent the same element in $J(\C P^{m-l-1})$ and hence are stably homotopically equivalent. Therefore the shift of indices ${(i,j) \mapsto (i+M_k,j+M_k)}$ maps the first $r$ pages of the spectral sequence into itself via the canonical isomorphism $\pi^{\bf S}_q(X) \cong \pi^{\bf S}_{q+2M_k}(S^{2M_k}X)$ for $r\leq k-1$.
\par
Let us give a more detailed overview of the isomorphism $E^r_{i,j} \cong E^r_{i+M_k,j+M_k}$ for $r \leq k-1$.
$$
E^1_{i,j}=\pi^{\bf S}_{i+j}(\C P^i/\C P^{i-1}) = \pi^{\bf S}_{i+j}(\S^{2i})
$$
$$
E^1_{i+M_k,j+M_k}=\pi^{\bf S}_{i+j+2M_k}(\C P^{i+M_k}/\C P^{i+M_k-1}) = \pi^{\bf S}_{i+j+2M_k}(\S^{2i+2M_k})
$$
hence $E^1_{i,j}$ is canonically isomorphic to $E^1_{i+M_k,j+M_k}$.
\par
The group $E^r_{p,q}$ is defined as the quotient
$$
E^r_{i,j} = \frac{Z^r_{i,j}}{B^r_{i,j}} = \frac{\img \left( \pi^{\bf S}_{i+j}(\C P^i,\C P^{i-r}) \to \pi^{\bf S}_{i+j}(\C P^i,\C P^{i-1})\right)}{\img \left( \pi^{\bf S}_{i+j+1}(\C P^{i+r-1},\C P^{i}) \overset{\partial}{\to} \pi^{\bf S}_{i+j}(\C P^i,\C P^{i-1})\right)}=
$$
$$
=\frac{\img \left( \pi^{\bf S}_{i+j}(T(i-r+1)\gamma_{r-1}) \to \pi^{\bf S}_{i+j}(\S^{2i})\right)}{\img \left( \pi^{\bf S}_{i+j+1}(T(i+1)\gamma_{r-2}) \to \pi^{\bf S}_{i+j}(\S^{2i})\right)}
$$
Now replacing $i$ by $i+M_k$ and $j$ by $j+M_k$ we obtain
$$
E^r_{i+M_k,j+M_k} = \frac{\img \left( \pi^{\bf S}_{i+j+2M_k}(T(i-r+1+M_k)\gamma_{r-1}) \to \pi^{\bf S}_{i+j+2M_k}(\S^{2i+2M_k})\right)}{\img \left( \pi^{\bf S}_{i+j+1+2M_k}(T(i+1+M_k)\gamma_{r-2}) \to \pi^{\bf S}_{i+j+2M_k}(\S^{2i+2M_k})\right)}
$$
But we can replace $T(a+M_k)\gamma_m$ by $S^{2M_k}Ta\gamma_m$ for any $a$ if $m<k$, hence we obtain a canonical isomorphism between $E^r_{i,j}$ and $E^r_{i+M_k,j+M_k}$. This proves the isomorphism of the groups, and the same argument goes through for the differentials $d^r$.
\end{proof}

\par

This proof relied on a sophisticated theorem of Adams, Atiyah and others that determined the order of $J(\gamma_{k-1})$. Next we give a simple independent proof of the fact that $d^1$ is $(2,2)$-periodic, elaborating \cite[Proposition 5.1]{Mosher}. Recall that $d^1_{s,s}$ is the map
$$
d^1_{s,s}: \pi_{2s}(\C P^s/\C P^{s-1}) \to \pi_{2s-1}(\C P^{s-1}/\C P^{s-2}).
$$
If $\iota_s \in \Z = \pi^{\bf S}_{2s}(\C P^s/\C P^{s-1})$ is the positive generator, then $d^1_{s,s}(\iota_s)$ is the (stable) homotopy class of the attaching map of the $2s$-cell of $\C P^s/\C P^{s-2}$ to the sphere $\C P^{s-1}/\C P^{s-2}$. Note that for any $s$ this map is trivial if and only if $\C P^s/\C P^{s-2}$ is stably homotopy equivalent to $\S^{2s} \vee \S^{2s-2}$.

\begin{lemma}\label{lemma:Sq2}
In the ring $H^*(\C P^s/\C P^{s-2})$ the cohomological operation $Sq^2$ is nontrivial if and only if $s$ is even.
\end{lemma}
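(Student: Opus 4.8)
The plan is to reduce the assertion to the standard action of the Steenrod algebra on $H^*(\C P^\infty;\Z/2)$ via naturality. Write $Y=\C P^s/\C P^{s-2}$. First I would pin down $H^*(Y;\Z/2)$. Apply the long exact sequence in $\Z/2$-cohomology to the cofibration $\C P^{s-2}\hookrightarrow \C P^s \overset{q}{\to} Y$. Since $H^j(\C P^{s-2};\Z/2)=0$ for $j>2s-4$, the map $q^*$ is an isomorphism in degrees $2s-2$ and $2s$, and $H^*(Y;\Z/2)$ vanishes in all other positive degrees. Thus $H^*(Y;\Z/2)$ has generators $a$ in degree $2s-2$ and $b$ in degree $2s$, with $q^*a=t^{s-1}$ and $q^*b=t^s$, where $t\in H^2(\C P^\infty;\Z/2)$ is the generator (restricted to $\C P^s$). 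The cup product $a^2$ lands in degree $4s-4>2s$, hence vanishes, so the only potentially nontrivial primary operation relating the two generators is $Sq^2a\in H^{2s}(Y;\Z/2)$, which is exactly what the lemma concerns.

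Next, by naturality of the Steenrod squares, $q^*(Sq^2 a)=Sq^2(q^*a)=Sq^2(t^{s-1})$. Since $q^*$ is an isomorphism in degree $2s$, the class $Sq^2 a$ is nonzero in $H^{2s}(Y;\Z/2)$ if and only if $Sq^2(t^{s-1})$ is nonzero in $H^{2s}(\C P^s;\Z/2)=H^{2s}(\C P^\infty;\Z/2)$. Finally I would invoke the well-known computation in $H^*(\C P^\infty;\Z/2)=\Z/2[t]$: from $Sq^1t=0$, $Sq^2t=t^2$ and the Cartan formula one obtains $Sq^2(t^n)=\binom{n}{1}t^{n+1}=n\,t^{n+1}$, so $Sq^2(t^{s-1})=(s-1)\,t^s$. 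This is nonzero mod $2$ precisely when $s-1$ is odd, i.e. when $s$ is even, which is the claim.

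There is no real obstacle here; the only step needing a bit of care is the bookkeeping with the cofibre sequence that identifies $H^*(Y;\Z/2)$ and verifies that $q^*$ is an isomorphism in degrees $2s-2$ and $2s$, which is what legitimizes transporting the Steenrod computation from $\C P^\infty$ down to $Y$.
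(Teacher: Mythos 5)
Your proof is correct and follows essentially the same route as the paper: both use the projection $\C P^s \to \C P^s/\C P^{s-2}$, the fact that it induces isomorphisms on $\Z_2$-cohomology in degrees $2s-2$ and $2s$, and naturality of $Sq^2$ together with the Cartan-formula computation $Sq^2(t^{s-1})=(s-1)t^s$. Your write-up is just slightly more explicit about identifying $H^*(\C P^s/\C P^{s-2};\Z_2)$ via the cofibre sequence and about why $Sq^2$ on the bottom class is the only operation at issue.
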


{\bf Corollary:} For $s$ even the space $\C P^s/\C P^{s-2}$ is not stably homotopically equivalent to $\S^{2s} \vee \S^{2s-2}$, hence in this case $d^1_{s,s}(\iota_s)$ is not trivial.

\begin{proof}[Proof of Lemma \ref{lemma:Sq2}]
The projection $\C P^s \to \C P^s/\C P^{s-2}$ induces isomorphisms of the $\Z_2$-cohomology groups in dimension $2s$ and $2s-2$. Let us denote by $y$ the generator of the ring $H^*(\C P^s; \Z_2)$. Then $Sq^2 y^{s-1} = (s-1) y^s \neq 0$ if $s$ is even. The commutativity of the following diagram finishes the proof:
$$
\xymatrix{
H^{2s-2}(\C P^s/\C P^{s-2}) \ar[r]^{Sq^2} & H^{2s}(\C P^s/\C P^{s-2}) \\
H^{2s-2}(\C P^s) \ar[u]_{\cong} \ar[r]^{Sq^2} & H^{2s}(\C P^s) \ar[u]_{\cong}
}
$$
\end{proof}

We thus know that for $s$ even $d^1_{s,s}(\iota_s) \neq 0$, and it remains to show that under the same conditions $d^1_{s+1,s+1}(\iota_{s+1})=0$. We have established in Part I \cite{partI} that the differentials commute with the composition product. In particular, consider the map $d^1_{s,s+1}:E^1_{s,s+1} \to E^1_{s-1,s+1}$. Its domain is $E^1_{s,s+1} = \pi^{\bf S}_{2s+1}(\C P^s/\C P^{s-1}) \cong \pi^{\bf S}(1) = \Z_2$, with the generator traditionally denoted by $\eta$ (see \cite{Toda}). The codomain of the map is $\pi^{\bf S}(2) = \Z_2$ and is generated by $\eta \circ \eta$. This implies that $d^1_{s,s+1}(\eta)=\eta \circ d^1_{s,s}(\iota_s) = \eta \circ \eta \neq 0$. But then $d^1_{s+1,s+1}$ must vanish as it maps into the trivial kernel of $d^1_{s,s+1}$.

\subsection{Periodicity modulo $p$}

In \cite{Mosher}, it was stated without proof that a similar result holds when one considers the spectral sequence only from the point of view of $p$-components of the involved groups. We present a proof below. To simplify notation, we will use the convention $n_p = p^{\nu_p(n)}$ to denote the $p$-component of a natural number $n$.
\begin{propNull}{\cite[in the text]{Mosher}}
If $r \leq k-1$ then $E^r_{i,j}$ and $E^r_{i+(M_k)_p,j+(M_k)_p}$ are isomorphic modulo the class of groups of order coprime to $p$. Moreover, the $p$-isomorphism $E^k_{i,j} \cong E^k_{i+(M_k)_p,j+(M_k)_p}$ commutes with the differential $d^k$.
\end{propNull}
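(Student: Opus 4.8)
The plan is to run the proof of Proposition~\ref{thm:periodicity} after localization at $p$. For finitely generated abelian groups a homomorphism is an isomorphism modulo $\mathcal{C}_p$ (the Serre class of finite groups of order prime to $p$) exactly when it becomes an isomorphism upon applying the exact functor $-\otimes\Z_{(p)}$; since $-\otimes\Z_{(p)}$ is also natural, it suffices to produce, compatibly with the homomorphisms induced by inclusions of pairs, isomorphisms $E^r_{i,j}\otimes\Z_{(p)}\cong E^r_{i+(M_k)_p,\,j+(M_k)_p}\otimes\Z_{(p)}$ for $r\le k-1$, together with the analogous statement on $E^k$ and commutation with $d^k$.

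Recall from the proof of Proposition~\ref{thm:periodicity} that for $r\le k$ the page $E^r_{i,j}=Z^r_{i,j}/B^r_{i,j}$ is a subquotient of $\pi^{\bf S}_*(\C P^i/\C P^{i-1})$ assembled out of the relative groups $\pi^{\bf S}_*(\C P^m/\C P^l)$ with $0\le m-l\le k$ and the inclusion-induced maps between them, while $d^r$ is induced by the connecting homomorphism of the cofibration $\C P^{i-r}/\C P^{i-r-1}\to\C P^i/\C P^{i-r-1}\to\C P^i/\C P^{i-r}$, which for $r=k$ also brings in the gap-$(k+1)$ quotient $\C P^i/\C P^{i-k-1}$. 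Writing $\C P^m/\C P^l=T\bigl((l+1)\gamma_{m-l-1}\bigr)$ and $\C P^{m+(M_k)_p}/\C P^{l+(M_k)_p}=T\bigl((l+1+(M_k)_p)\gamma_{m-l-1}\bigr)$, the whole statement reduces to a $p$-local periodicity of Thom spectra: for every $a$ and every $n\le k$ the spectra $T\bigl((a+(M_k)_p)\gamma_n\bigr)$ and $S^{2(M_k)_p}T(a\gamma_n)=T\bigl(a\gamma_n\oplus(M_k)_p\varepsilon^1\bigr)$ are homotopy equivalent after localization at $p$. These two bundles differ by the reduced $J$-class $(M_k)_p\cdot\bigl([\gamma_n]-[\varepsilon^1]\bigr)\in\tilde J(\C P^n)$; as $[\gamma_n]-[\varepsilon^1]$ has order the Atiyah--Todd number $M_{n+1}$, which divides $M_k$ when $n\le k-1$, this class has order $M_{n+1}/(M_{n+1})_p$, prime to $p$, so the two $J$-classes agree modulo $\mathcal{C}_p$. (For $r\le k-1$ only bases with $n\le k-1$ occur, so these pages and differentials are covered. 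The extra case $n=k$, coming from $\C P^i/\C P^{i-k-1}$, enters only in the commutation with $d^k$; there $[\gamma_k]-[\varepsilon^1]$ has order $M_{k+1}$, so the class $(M_k)_p\cdot\bigl([\gamma_k]-[\varepsilon^1]\bigr)$ has order $M_{k+1}/(M_k)_p$, and one must argue, using the defining formula for $\nu_p(M_k)$, that the excess $\nu_p(M_{k+1})-\nu_p(M_k)$ does not obstruct the periodicity of $d^k$ modulo $\mathcal{C}_p$ --- this is the delicate point of the differential statement.)

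It remains to promote ``the $J$-classes agree modulo $\mathcal{C}_p$'' to ``the Thom spectra are $p$-locally homotopy equivalent'': the $p$-local counterpart of the principle that equal classes in $J(X)$ yield stably homotopy equivalent Thom spaces, used in the proof of Proposition~\ref{thm:periodicity}. I expect this to be the main obstacle. One argument: if vector bundles $\xi,\zeta$ of equal rank over a finite complex have reduced $J$-classes differing by an element of order $w$ coprime to $p$, then $w[\xi]=w[\zeta]$ in $J(X)$ gives a stable fibrewise homotopy equivalence $S(w\xi)\simeq S(w\zeta)$, and --- $w$ being a unit in $\Z_{(p)}$ --- from it a fibrewise map $S\xi\to S\zeta$ of fibrewise degree prime to $p$; the induced map on Thom spectra is an isomorphism on $\bmod\,p$ homology, hence a localization-at-$p$ equivalence. (Alternatively one appeals to the $p$-primary part of the Adams--Atiyah--Todd computation of $J(\C P^n)$, just as the proof of Proposition~\ref{thm:periodicity} appeals to its integral form.) Granting this, exactness and naturality of $-\otimes\Z_{(p)}$ carry the index shift $(i,j)\mapsto(i+(M_k)_p,\,j+(M_k)_p)$ through the subquotients $Z^r/B^r$ and through the connecting homomorphisms defining the differentials, and the proof of Proposition~\ref{thm:periodicity} goes through with $M_k$ replaced by $(M_k)_p$ and ``isomorphism'' by ``$\mathcal{C}_p$-isomorphism''.
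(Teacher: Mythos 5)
Your reduction is the same as the paper's: shift indices by $(M_k)_p$, rewrite the quotients $\C P^m/\C P^l$ as Thom spaces $T\bigl((l+1)\gamma_{m-l-1}\bigr)$, and reduce everything to the claim that adding $(M_k)_p\gamma_n$ ($n\le k-1$) changes the Thom space only by a stable $p$-equivalence with a $2(M_k)_p$-fold suspension. The genuine gap is exactly at the step you yourself flag as ``the main obstacle'': passing from ``$[\xi]-[\zeta]$ has order $w$ coprime to $p$ in $J(\C P^n)$'' to ``$T\xi$ and $T\zeta$ are stably $p$-equivalent''. Your proposed construction --- from a stable fibrewise homotopy equivalence $S(w\xi)\simeq S(w\zeta)$ obtain, ``$w$ being a unit in $\Z_{(p)}$'', a fibrewise map $S\xi\to S\zeta$ of fibre degree prime to $p$ --- is not an argument: inverting $w$ is an operation in the abstract group $J(X)$, and there is no geometric way to divide a Whitney sum (or a fibrewise equivalence of $w$-fold sums) by $w$. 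Your fallback, ``appeal to the $p$-primary part of the Adams--Atiyah--Todd computation of $J(\C P^n)$'', does not help either, because knowing the $p$-component of the abstract group $J(\C P^n)$ says nothing by itself about fibrewise $p$-equivalence or $p$-equivalence of Thom spaces; what is needed is precisely the identification of the algebraic $p$-component with the geometrically defined group of stable fibrewise $p$-equivalence classes. This bridge is the actual content of the paper's proof: it introduces the geometric group $J_p(X)$ and proves Lemma \ref{lemma:JpCP} ($J_p(\C P^n)\cong J_p^{alg}(\C P^n)$) by an argument special to $\C P^n$ --- the fibrewise degree-$q$ maps $\gamma_n\to\gamma_n^{\otimes q}$ give the relations \eqref{eq:Jmain}, and the Vandermonde-determinant computation with $q_j=pj+1$ shows $J_p(\C P^n)$ is a $p$-group, so prime-to-$p$ torsion of $J$ dies in $J_p$ (Adams' theorem is then quoted only for the reverse inclusion). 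Without this lemma or an equivalent substitute (e.g.\ fibrewise localization of spherical fibrations), your proof does not close.

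A secondary point: your worry that the commutation with $d^k$ drags in the gap-$(k+1)$ quotients $\C P^i/\C P^{i-k-1}$, hence $\gamma_k$ and an ``excess'' $\nu_p(M_{k+1})-\nu_p(M_k)$, is a miscount. The group $E^k_{i,j}$ and the differential $d^k$ are assembled from pairs $(\C P^m,\C P^l)$ with $m-l\le k$ only (one lifts to $\pi^{\bf S}_*(\C P^i,\C P^{i-k})$ and applies the boundary to a gap-one pair), so the only bundles that occur live over $\C P^{k-1}$ and have $J$-order dividing $M_k$; the ``delicate point'' you leave open does not arise. But since you left it unresolved and the main bridge above is missing, the proposal as written has a real gap rather than being a complete alternative proof.
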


\par
For reader's convenience we recall the notion of $p$-equivalence that shall be used in the proof.
\par
{\bf Definition:} A map $f: X \to Y$ of simply connected spaces is a $p$-equivalence if it induces isomorphisms on the $p$-components of all homotopy groups. The $p$-equivalence of spaces is the finest equivalence relation according to which any two spaces that admit a $p$-equivalence map between them are equivalent.
\par
{\bf Definition:} Equivalently, simply connected spaces $X$ and $Y$ are $p$-equivalent if their $p$-localizations are homotopy equivalent.
\par
In order to imitate the previous proof we define the groups $J_p$:

{\bf Definition:} For a topological space $X$, let $J_p(X)$ be the set of stable fiberwise homotopy $p$-equivalence classes of sphere bundles $S\xi$, where $\xi$ is a vector bundle over $X$. In particular, if the classes $[\xi]=[\zeta]\in J_p(X)$, then the Thom spaces $T\xi$ and $T\zeta$ are stably homotopically $p$-equivalent.

\par

Note that the natural map $J_p: K(X) \to J_p(X)$ factors through $J: K(X) \to J(X)$ and transfers the abelian group structure of $K(X)$ and $J(X)$ to $J_p(X)$. Indeed, the Cartan sum operation in $K(X)$ corresponds to taking the fibrewise join in $J(X)$ and $J_p(X)$, and inverse elements exist in $J_p(X)$ since they can be found in $J(X)$.

\par

We are tempted to claim that $J_p(X)$ is actually just the $p$-component of $J(X)$ (which we denote by $J_p^{alg}(X)$ and call the ``algebraic'' $J_p$ to distinguish it from the ``geometric'' $J_p$ defined above), but we can only show this for the spaces $X=\C P^r$; this is however enough to prove Mosher's claim.

\begin{lemma}\label{lemma:JpCP}
$J_p(\C P^n) \cong J_p^{alg}(\C P^n)$. In other words, for any two stable vector bundles $\xi$ and $\eta$ over $\C P^n$ the sphere bundles $S\xi$ and $S\eta$ are stably fiberwise $p$-equivalent if and only if the class $[\xi-\eta] \in J(\C P^n)$ has order coprime to $p$. 
\end{lemma}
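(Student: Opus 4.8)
The plan is to compare $J_p(\C P^n)$ with the whole group $J(\C P^n)$ via the tautological map and then identify its kernel. A fiberwise homotopy equivalence of sphere bundles is in particular a fiberwise $p$-equivalence, so sending a vector bundle to itself induces a surjective homomorphism $q\colon J(\C P^n)\twoheadrightarrow J_p(\C P^n)$; precomposing with $K(\C P^n)\twoheadrightarrow J(\C P^n)$ shows in addition that $J_p(\C P^n)$ is a finite abelian group. Being finite, $J(\C P^n)$ splits canonically as $J(\C P^n)=J_p^{alg}(\C P^n)\oplus J_{p'}(\C P^n)$ with $J_{p'}(\C P^n)$ the subgroup of classes of order prime to $p$; recall moreover from Adams--Walker \cite{AdamsWalker} that $J(\C P^n)$ is cyclic of order $M_{n+1}$, generated by the stable class of $\gamma_n$, so that $J_p^{alg}(\C P^n)$ is cyclic of order $(M_{n+1})_p$. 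Hence it suffices to prove that $\ker q=J_{p'}(\C P^n)$, i.e. that $q$ annihilates every class of order prime to $p$ and is injective on the $p$-primary part.

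For the inclusion $J_{p'}(\C P^n)\subseteq\ker q$ I would argue that $J_p(\C P^n)$ is a $p$-group. If $[\xi]-[\eta]$ has order $m$ prime to $p$ in $J(\C P^n)$, then as an element of the abelian group $[\C P^n,BSG]$ (into which the $J$-classes map) it is killed by $m$, hence lies in the part of order prime to $p$, hence becomes trivial after localising $BSG$ at $p$; since two sphere bundles are fiberwise $p$-equivalent as soon as their classifying maps to $BSG$ agree after $p$-localisation, $S\xi$ and $S\eta$ are fiberwise $p$-equivalent and $q\bigl([\xi]-[\eta]\bigr)=0$. Consequently $q$ factors as $J(\C P^n)\twoheadrightarrow J_p^{alg}(\C P^n)\overset{\bar q}{\twoheadrightarrow}J_p(\C P^n)$, and everything reduces to showing that $\bar q$ is injective.

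For the injectivity of $\bar q$ I would use that the order of a class in $J(\C P^n)$ is faithfully detected $p$-adically by the Adams $e$-invariant (equivalently, by the cannibalistic classes $\rho^k$ with $k$ prime to $p$) --- this is part of the circle of results of Adams, Atiyah and Adams--Walker already invoked for Proposition \ref{thm:periodicity}, specialised to stunted projective spaces. Concretely, if $S\xi$ and $S\eta$ are fiberwise $p$-equivalent then their $p$-localised Thom spectra coincide, so the $p$-local information carried by the Adams operations on the $K$-theory Thom class is the same for both; since the $p$-component of the $e$-invariant is injective on the $p$-primary part $J_p^{alg}(\C P^n)$ (the $p$-part of the order of $\gamma_n$ being exactly $(M_{n+1})_p$), a class of $p$-power order killed by $q$ must already be zero. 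Thus $\bar q$ is injective and $J_p(\C P^n)\cong J_p^{alg}(\C P^n)$.

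The main obstacle --- and the reason the statement is claimed only for $\C P^n$ and not for an arbitrary finite complex --- is to make rigorous the dictionary ``fiberwise $p$-equivalence of $S\xi$ and $S\eta$'' $\Longleftrightarrow$ ``the $p$-primary components of $[\xi]$ and $[\eta]$ in $J(\C P^n)$ agree''. Its ``if'' half, used above for $J_{p'}(\C P^n)\subseteq\ker q$, is essentially formal once one passes to $p$-localised classifying maps; the ``only if'' half is the delicate one and forces one to know that the $p$-component of $J(\C P^n)$ is exactly what the $p$-local (that is, $K$-theoretic) invariants see, i.e. that the geometric $J_p(\C P^n)$ does not collapse anything that $J_p^{alg}(\C P^n)$ keeps. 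For a general $X$ this would require identifying the geometric $J_p(X)$ with the image of $K(X)$ in $[X,BSG]_{(p)}$, which is precisely what is not asserted in general, whereas for $\C P^n$ the explicit cyclic structure of $J(\C P^n)$ together with the classical $e$-invariant calculations supply it.
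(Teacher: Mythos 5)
Your overall skeleton (factor the surjection $J(\C P^n)\to J_p(\C P^n)$ through the $p$-primary part and identify the kernel) is the same as the paper's, but both halves of your kernel identification rest on inputs that are not available as stated. First, the structural claim you ``recall'' from \cite{AdamsWalker} is misquoted: Adams--Walker (and Atiyah--Todd) compute the \emph{order of the element} $[\gamma_n]$ in $J(\C P^n)$ --- that is exactly how $M_{n+1}$ is defined in this paper --- not the structure of the group, and $J(\C P^n)$ is not cyclic on $\gamma_n$ in general. Your injectivity argument for $\bar q$ only invokes detection of the $p$-part of the order of $\gamma_n$, so even granting your $e$-invariant input it would at best give injectivity on the cyclic subgroup generated by $\gamma_n$, not on all of $J_p^{alg}(\C P^n)$. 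Moreover the detection statement itself (``the $p$-primary part of $J(\C P^n)$ is faithfully seen by $e$-invariants/cannibalistic classes'') is essentially the $J=J''$ theorem for $\C P^n$, a deep fact tied to the Adams conjecture; it is not contained in the computations used for Proposition \ref{thm:periodicity}, and you give no argument for it. The paper's injectivity step is far more elementary: if $J_p^{alg}(\xi)\neq 0$ but $S\xi$ is fibrewise $p$-trivial, then there is a fibrewise map of degree $s$ coprime to $p$ to the trivial bundle, Adams' mod-$k$ Dold theorem \cite[Theorem (1.1)]{AdamsJ} gives $s^eJ(\xi)=0$, and multiplication by $s^e$ is invertible on a $p$-group, forcing $J_p^{alg}(\xi)=0$.

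The second gap is the step you call ``essentially formal'': that a class of order prime to $p$ in $J(\C P^n)$ dies in the geometric $J_p(\C P^n)$. With the notion of fibrewise $p$-equivalence actually used here (witnessed by fibrewise maps of degree prime to $p$, which is what the rest of the paper needs, e.g.\ to get stable $p$-equivalences of Thom spaces), passing from ``classifying maps into $BSG$ agree after $p$-localization'' back to such a geometric equivalence is a converse mod-$k$ Dold/fibrewise-localization statement, not a formality --- and it is precisely where the paper does its real work. The paper instead proves directly that $J_p(\C P^n)$ is a $p$-group: the fibrewise degree-$q$ maps $\gamma_n\to\gamma_n^{\otimes q}$ for $(q,p)=1$ give the relations $J_p(1+x)=J_p((1+x)^q)$ in $K(\C P^n)=\Z[x]/(x^{n+1})$, and a Vandermonde determinant computation with $q_j=pj+1$ yields $p^{C}J_p(x)=0$; this explicit use of line-bundle tensor powers is exactly why the lemma is claimed only for $\C P^n$. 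Once $J_p(\C P^n)$ is known to be a $p$-group, the factorization through $J_p^{alg}(\C P^n)$ is automatic, and your finite-abelian-group bookkeeping is fine; but as written your proof replaces the two substantive steps by appeals that are either unproved or rest on the incorrect cyclicity claim, so it does not yet constitute a proof.
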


\begin{proof}[Proof of Lemma \ref{lemma:JpCP}]
Denote by $x = \gamma_n-1 \in K(\C P^n)$ the rank $0$ representative of the stable class of the tautological bundle. It is known (\cite[Theorem 7.2.]{AdamsK}) that $K(\C P^n) \cong \Z[x]/(x^{n+1})$.
\par

For any positive integer $q$ coprime to $p$ we have $J_p(\gamma_n) = J_p(\gamma_n^{\otimes q})$ since there is a fibrewise degree $q$ map from $\gamma_n$ to $\gamma_n^{\otimes q}$. Rewriting this in terms of $x$ and using $x^{n+1}=0$, we get that for any such $q$ we have
\begin{equation}\label{eq:Jmain}
J_p(1+x)=J_p((1+x)^q)=J_p\left(1+qx+{q \choose 2}x^2+\dots+{q \choose n}x^n\right).
\end{equation}
We show that we can choose values $q_1$, $q_2$, $\dots$, $q_m$ and $\lambda_1$, $\lambda_2$, $\dots$, $\lambda_m$ in such a way that considering the equality \eqref{eq:Jmain} for $q=q_1$, $\dots$, $q_m$ and forming the linear combination of the obtained equalities with coefficients $\lambda_1$, $\dots$, $\lambda_m$ yields the equality $p^C J_p(x)=0$ for some positive integer $C$. As a consequence, we deduce that the group $J_p(\C P^n)$ is a $p$-primary group.
\par
To do this, observe that the coefficients of $1$, $x$, $\dots$, $x^n$ in the linear combination of the right-hand side of \eqref{eq:Jmain} for $q=q_1, \dots, q_m$ and with coefficients $\lambda_1, \dots, \lambda_m$ are the coordinates of the image of the column vector $(\lambda_1, \dots, \lambda_m)^t$ under the linear transformation described by the matrix
$$
M=\left(
\begin{array}{cccc}
1 & 1 & \dots & 1\\
q_1 & q_2 & \dots & q_m \\
{q_1 \choose 2} & {q_2 \choose 2} & \dots & {q_m \choose 2}\\
\vdots & \vdots & \ddots & \vdots \\
{q_1 \choose n} & {q_2 \choose n} & \dots & {q_m \choose n}\\
\end{array}
\right)
$$
If we choose $m=n+1$, then $M$ becomes a square matrix and its determinant can be computed. Indeed, in the $j$th row for $j = 0$, $\dots$, $n$ the $i$th element is the same polynomial of degree $j$ with leading coefficient $1/j!$, evaluated at $q_i$. By induction on $j$, the rows with indices less than $j$ span linearly all the polynomials of the variable $q_i$ of degree less than $j$ for any $i$ and hence adding a suitable linear combination of these rows to the $j$th row we can turn the $j$th row into $\left(\frac{q_i^j}{j!}\right)_{i=1..m}$. Doing this for all $j$, we do not change the determinant and arrive at the matrix $\left(\left( \frac{q_i^j}{j!} \right)\right)_{i=1..m,j=0..n}$, whose determinant is $\prod_{j=0}^{n} \frac{1}{j!}$ times the determinant of the Vandermonde determinant on the elements $q_1, \dots, q_m$, so
$$
\det M = \frac{\prod_{1 \leq v < u \leq m+1} (q_u-q_v) }{\prod_{j=0}^{n} j!}.
$$
\noindent
In particular, if we choose $q_j=pj+1$, then $\det M = p^{n+1 \choose 2}$.

\par
Choosing the (integral) vector $v=(\lambda_1,\dots,\lambda_m)^t$ to be the first column of the cofactor matrix of $M$ (which is $\left(\det M\right) \cdot M^{-1}$), we have $Mv=(\det M,0,\dots,0)^t$, so when we take the linear combination of \eqref{eq:Jmain} for $q=q_i$ with coefficient $\lambda_i$, the right-hand sides sum up to $J_p(\det M \cdot 1)$ and the left-hand sides sum up to $J_p(\sum_i \lambda_i (1+x)) = J_p\left(\left(\det M\right) \cdot (1+x)\right)$. But $J_p(1)=0$ by definition, so we get that $\left(\det M\right) J_p(x) = 0$. Since (with the choice $q_j=pj+1$) the determinant $\det M$ is a power of $p$, $J_p(\C P^n)$ is a $p$-group.

\par

Using the universal property of the $p$-component $J_p^{alg}(\C P^k)$, this proves that the natural projection $J(\C P^k) \to J_p(\C P^k)$ factors through $J_p^{alg}(\C P^k)$. On the other hand, no nonzero element of $J_p^{alg}(\C P^k)$ can vanish in $J_p(\C P^k)$ by \cite[Theorem (1.1)]{AdamsJ}: if $S\xi$ is fibrewise $p$-trivial and hence $\xi$ admits a map of degree $s$ to a trivial bundle, where $s$ is coprime to $p$, then there exists a nonnegative integer $e$ such that $s^e\xi$ is fibrewise homotopy equivalent to a trivial bundle. Multiplication by $s^e$ is an isomorphism in any $p$-group, hence $J(s^e\xi)=0$ and consequently $J_p^{alg}(s^e\xi)=s^eJ_p^{alg}(\xi)=0$ implies that $J_p^{alg}(\xi)=0$.

\end{proof}

\begin{proof}[Proof of periodicity modulo $p$]
Denote by $q$ the quotient $M_k/(M_k)_p$. Then $(p,q)=1$ and $M_k = q \cdot (M_k)_p$. The class of the tautological bundle $\gamma_{k-1}$ in $J(\C P^{k-1})$ has order $M_k$, so its image in $J_p(\C P^{k-1})$ has order $(M_k)_p$. In particular, we have
$$
J_p((M_k)_p\gamma_{k-1})=0.
$$
Consequently for any bundle $\xi$ over $\C P^{k-1}$ we have $J_p(\xi) = J_p(\xi + (M_k)_p \gamma_{k-1})$ and therefore the Thom spaces $T\xi$ and $T(\xi + (M_k)_p \gamma_{k-1})$ are stably homotopically $p$-equivalent. In particular, we obtain a $p$-isomorphism between the groups
$$\pi^{\bf S}_*(\C P^m,\C P^l) \cong \pi^{\bf S}_*(T((l+1)\gamma_{m-l-1}))$$
and
$$\pi^{\bf S}_*(\C P^{m+(M_k)_p},\C P^{l+(M_k)_p}) \cong \pi^{\bf S}_*(T((l+1+(M_k)_p)\gamma_{m-l-1})).$$
Hence (analogously to the proof of Proposition \ref{thm:periodicity}) $E^r_{i,j}$ and $E^r_{i+(M_k)_p,j+(M_k)_p)}$ have canonically isomorphic $p$-components for $r\leq k-1$. This proves the Proposition.
\end{proof}

\section{Image of $J$}

\begin{thm}{\cite[Proposition 4.7(a)]{Mosher}}\label{thm:imJ}
Let $\iota_s \in \pi^{\bf S}_{2s}(\C P^s/\C P^{s-1}) = E^1_{s,s} \cong \Z$ be a generator and suppose that $d^i(\iota_s)=0$ for $i<k$. Then $d^k(\iota_s) \in E^k_{s-k,s+k-1}$ belongs to the image of $\img J \subset E^1_{s-k,s+k-1} = \pi^{\bf S}(2k-1)$ in $E^k_{s-k,s+k-1}$.
\end{thm}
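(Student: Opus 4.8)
The plan is to realise $d^k(\iota_s)$ as the attaching map of an explicit $2$-cell complex and then to recognise that complex as the Thom space of a vector bundle over $\S^{2k}$; since the attaching map of the top cell of the Thom space of a bundle over a sphere is, by definition, the $J$-image of the clutching function of the bundle, this places $d^k(\iota_s)$ into $\img J$.

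First I would use the standard identification of $\C P^s/\C P^{s-k-1}$ with the Thom space $T((s-k)\gamma_k)$, so that it is a complex with bottom cell $\S^{2(s-k)}$, top cell $e^{2s}$, and $k-1$ cells in between. By the construction of the spectral sequence, the hypothesis $d^i(\iota_s)=0$ for $i<k$ is equivalent to the statement that the attaching map $\S^{2s-1}\to\C P^{s-1}/\C P^{s-k-1}$ of $e^{2s}$ can be stably compressed onto the bottom cell $\S^{2(s-k)}$ --- equivalently, that $\C P^s/\C P^{s-k}$ is stably reducible. Fixing such a compression, the corresponding $2$-cellification of $\C P^s/\C P^{s-k-1}$ in the sense of Section~\ref{section:stable} is a $2$-cell complex $Z=\S^{2(s-k)}\cup_\alpha e^{2s}$ with $\alpha\in\pi^{\bf S}_{2s-1}(\S^{2(s-k)})=\pi^{\bf S}(2k-1)=E^1_{s-k,s+k-1}$, and by the very definition of the higher differentials the class of $\alpha$ in the subquotient $E^k_{s-k,s+k-1}$ is exactly $d^k(\iota_s)$ (the difference between the outcomes of two compressions lies in $B^k_{s-k,s+k-1}$, the indeterminacy that is divided out in passing from $E^1$ to $E^k$). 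Hence it suffices to produce one compression for which $\alpha\in\img J$, i.e.\ to show that $Z$ may be chosen stably homotopy equivalent to the Thom space of a vector bundle over $\S^{2k}$.

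Next I would extract that bundle from the top cell of $\C P^k$. Write $\C P^k=\C P^{k-1}\cup_\varphi e^{2k}$, where $\varphi\colon\S^{2k-1}\to\C P^{k-1}$ is the Hopf map. Then $\varphi^*\gamma_{k-1}$ is canonically trivial (its total space is $\{(v,\lambda v):v\in\S^{2k-1},\ \lambda\in\C\}$), hence so is $\varphi^*((s-k)\gamma_{k-1})\cong\varepsilon^{2(s-k)}$; on the other hand, the restriction of $(s-k)\gamma_k$ to the contractible cell $e^{2k}$ is trivial, which furnishes a second trivialisation of the same bundle over $\partial e^{2k}=\S^{2k-1}$. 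The difference of the two trivialisations is a well-defined clutching element $\theta$, and its associated vector bundle $\zeta_\theta$ over $\S^{2k}$ has Thom space $T(\zeta_\theta)=\S^{2(s-k)}\cup_{J(\theta)}e^{2s}$. The heart of the argument is that this $\theta$ is precisely what survives after the intermediate cells are killed: by naturality of the Thom construction, $T((s-k)\gamma_k)$ is built from $T((s-k)\gamma_{k-1})$ by attaching $e^{2s}$ along the Thomification of $\varphi$ twisted by the clutching data of $(s-k)\gamma_k$ along $\varphi$; and pushing this attaching map down the Thom-space filtration $\S^{2(s-k)}=T((s-k)\gamma_0)\subset\dots\subset T((s-k)\gamma_{k-1})$, which is possible exactly because of our hypothesis, leaves the class $J(\theta)$. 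Thus $Z\simeq_S T(\zeta_\theta)$, so one may take $\alpha=J(\theta)$, and $d^k(\iota_s)$ equals the image of $J(\theta)\in\img J$ in $E^k_{s-k,s+k-1}$.

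I expect the main obstacle to be this last identification --- showing that the compressed attaching map is genuinely $J(\theta)$, and that any discrepancy with it lies in $B^k_{s-k,s+k-1}$ and hence leaves the class in $E^k$ unchanged. This is a compatibility statement between the cellular filtration of $T((s-k)\gamma_k)$ and the clutching description of the bundle, and it is the step that requires care. A route that reduces the bookkeeping is to pass to $S$-duals: by Lemma~\ref{lemma:dual2cell} the $S$-dual of $Z$ is a $2$-cellification of $D_S[\C P^s/\C P^{s-k-1}]$, and by Lemma~\ref{lemma:selfDual} it is a $2$-cell complex with stable attaching map $\pm\alpha$; since $D_S[\C P^s/\C P^{s-k-1}]$ is, by Atiyah duality, the Thom spectrum of $-T\C P^k-(s-k)\gamma_k$ over $\C P^k$, and $J(\C P^k)$ is finite of order $M_{k+1}$, this dual is stably a suspension of a stunted projective space, and membership of $\alpha$ in $\img J$ can be detected there by the complex $e$-invariant --- i.e.\ by the action of the Adams operations $\psi^p$ on the $K$-theory Thom class, which is completely explicit for these spaces.
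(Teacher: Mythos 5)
Your setup is exactly the paper's: identifying $\C P^s/\C P^{s-k-1}$ with $T((s-k)\gamma_k)$, reading the hypothesis as stable reducibility of $\C P^s/\C P^{s-k}$, and recognizing that any compression of the top attaching map into the bottom cell yields a $2$-cellification whose attaching map represents $d^k(\iota_s)$ up to the images of the lower differentials (Lemma \ref{lemma:differential}). The gap is the step you yourself flag as the heart of the argument, and it is a genuine one: the claim $Z\simeq_S T(\zeta_\theta)$. Your clutching element $\theta$ is defined for \emph{every} $s$ and $k$ -- the divisibility hypothesis never enters its construction -- and the ``naturality of the Thom construction'' you invoke to push the attaching map down the filtration would require the compression to be induced by a stable fibre homotopy trivialization of $(s-k)\gamma_{k-1}$ over $\C P^{k-1}$. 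No such trivialization exists in general: the hypothesis gives $s+1\equiv 0 \pmod{M_k}$, hence $s-k\equiv -(k+1)\pmod{M_k}$, so $J((s-k)\gamma_{k-1})\neq 0$ (already for $k=2$ and $s$ odd). All you actually have is the compressibility of this one attaching map, and there is no map between $T((s-k)\gamma_k)$ and $T(\zeta_\theta)$ through which to compare the compressed map with $J(\theta)$. This is precisely why Mosher and the paper pass to the $S$-dual $\C P^{n+k}/\C P^{n-1}=T(n\gamma_k)$ with $n\equiv-(s+1)\pmod{M_{k+1}}$: there $M_k\mid n$, so $J(n\gamma_{k-1})=0$, the coreduction of $\C P^{n+k-1}/\C P^{n-1}$ does come from bundle-level data, and exactness of $J(\C P^{k-1})\leftarrow J(\C P^k)\leftarrow J(\S^{2k})$ produces an honest bundle $\xi$ over $\S^{2k}$ with $T(pr^*\xi)\simeq_S T(n\gamma_k)$, whose $2$-cellification is literally $T\xi$ with attaching map in $\img J$; Lemmas \ref{lemma:dual2cell}, \ref{lemma:selfDual} and \ref{lemma:differential} then confine the remaining ambiguity to the images of the lower differentials. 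The dualization is thus not a bookkeeping convenience but the place where the hypothesis is actually used.

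Your proposed fallback does dualize, but its detection step does not close the gap either: membership in $\img J$ cannot be certified by the $e$-invariant, because $\pi^{\bf S}(2k-1)=\img J\oplus\ker e_R$ and an element differing from an element of $\img J$ by a nonzero element of $\ker e_R$ has the same $e_C$ and $e_R$. Computing $e_C$ of the dual attaching map is the content of Theorem \ref{thm:formulaMosher}, which presupposes Theorem \ref{thm:imJ} rather than proving it. To repair your argument, replace the clutching construction over $\C P^k$ by the $J$-theoretic argument on the dual stunted projective space sketched above, or use the paper's second proof via the stable cohomotopy spectral sequence of the dual cofiltration mapped to the cofiltration of a $2$-cellification $Z$.
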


{\bf Remark:} by the geometric interpretation of the singularity spectral sequence (Subsection \ref{section:singSS}) the conclusion of Theorem \ref{thm:imJ} means that under its assumptions, the boundary map of an isolated $\Sigma^{1_{s-1}}$-point can be chosen to be a $\Sigma^{1_{s-k-1}}$-map whose $\Sigma^{1_{s-k-1}}$-set is an immersed framed $(2k-1)$-dimensional sphere. Indeed, $\img J$ consists exactly of those stable homotopy classes which can be represented by a framed sphere.

\begin{proof}
Let us recall the definitions of the differentials $d^i_{s,s}$. For $i=1$ it is the boundary map $\pi^{\bf S}_{2s}(\C P^s, \C P^{s-1}) \to \pi^{\bf S}_{2s-1}(\C P^{s-1}, \C P^{s-2})$. If this is zero on an element $x \in \pi^{\bf S}_{2s}(\C P^s,\C P^{s-1})$ then $x$ comes from $\pi^{\bf S}_{2s}(\C P^s,\C P^{s-2})$, that is, there is an element $x_2 \in \pi^{\bf S}_{2s}(\C P^s,\C P^{s-2})$ such that its image in $\pi^{\bf S}_{2s}(\C P^s,\C P^{s-1})$ is $x$. Then $d^2(x)$ is represented by $\partial(x_2)$, where $\partial$ is the boundary map $\pi^{\bf S}_{2s}(\C P^s, \C P^{s-2}) \to \pi^{\bf S}_{2s-1}(\C P^{s-2},\C P^{s-3})$. Analogously, if $d^{i-1}(x)=0$, then there is an element $x_i \in \pi^{\bf S}_{2s}(\C P^s,\C P^{s-i})$ that is mapped into $x$ by the map $\pi^{\bf S}_{2s}(\C P^s, \C P^{s-i}) \to \pi^{\bf S}_{2s}(\C P^{s},\C P^{s-1})$ and $d^i(x)$ is represented by $\partial (x_i)$, where $\partial$ is the boundary map $\pi^{\bf S}_{2s}(\C P^s, \C P^{s-i}) \to \pi^{\bf S}_{2s-1}(\C P^{s-i},\C P^{s-i-1})$.

\par

Hence the condition $d^i(\iota_s)=0$ for $i<k$ means that in the space $\C P^s/\C P^{s-k-1}$ the top dimensional cell\footnote{We shall consider the natural CW-structure on $\C P^s/\C P^{s-k-1}$ that has one cell in each even dimension $0$, $2(s-k)$, $2(s-k)+2$, $\dots$, $2s$.} is attached to $\C P^{s-1}/\C P^{s-k-1}$ by an attaching map that can be deformed into a map going into the lowest (positive-)dimensional cell of $\C P^{s}/\C P^{s-k-1}$. The class $d^k(\iota_s)$ will be the stable homotopy class of the obtained attaching map $\S^{2s-1} \to \S^{2(s-k)}$. Therefore if we fix the deformation into the lowest cell, then $d^k(\iota_s)$ can be considered as an element of $\pi^{\bf S}(2k-1)$. Without fixing the deformation the value of $d^k(\iota_s)$ will still be well-defined in $E^k_{s-k,s+k-1}$, which is the quotient of $E^1_{s-k,s+k-1} \cong \pi^{\bf S}(2k-1)$ modulo the images of the previous differentials $d^i$, $i<k$. For later reference, we summarize the following lemma:

\begin{lemma}\label{lemma:differential}
Assume that $d^i(\iota_s)=0$ for $i<k$. Then in the space $\C P^s/\C P^{s-k-1}$ the results of different deformations into the bottom cell of the attaching map of the top cell differ by elements of the images of the differentials $d^i$, $i<k$.
\end{lemma}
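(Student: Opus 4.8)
The plan is to prove Lemma~\ref{lemma:differential} by identifying each ``result of a deformation into the bottom cell'' with a representative, in $E^1_{s-k,s+k-1}=\pi^{\bf S}(2k-1)$, of the single well-defined class $d^k(\iota_s)\in E^k_{s-k,s+k-1}$, and then reading the indeterminacy off the construction of the spectral sequence. First I would fix notation: write $X=\C P^s/\C P^{s-k-1}$ and $X_j=\C P^{s-k+j}/\C P^{s-k-1}$ for $0\le j\le k$, so that $X_0=\S^{2(s-k)}$ is the bottom cell, $X_{k-1}=\C P^{s-1}/\C P^{s-k-1}$ is the $(2s-1)$-skeleton (there are no odd-dimensional cells), $X_k=X$, and $X_j/X_{j-1}=\S^{2(s-k)+2j}$. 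Let $\varphi\colon\S^{2s-1}\to X_{k-1}$ be the attaching map of the top cell; everything below is stable, so the groups $\pi^{\bf S}_{2s-1}(X_j)$ and the maps between them are additive. A deformation of $\varphi$ into the bottom cell is then a chain of maps $\varphi^{(j)}\colon\S^{2s-1}\to X_j$ with $\varphi^{(k-1)}=\varphi$ such that, for $j=k-1,\dots,1$, the composition of $\varphi^{(j-1)}$ with the inclusion $X_{j-1}\hookrightarrow X_j$ is homotopic to $\varphi^{(j)}$; at each step the obstruction to producing $\varphi^{(j-1)}$ from $\varphi^{(j)}$ is the image of $[\varphi^{(j)}]$ in $\pi^{\bf S}_{2s-1}(X_j/X_{j-1})$, which represents $d^{k-j}(\iota_s)$, so such a chain exists precisely because $d^i(\iota_s)=0$ for $i<k$ (this is the content of the description of $d^k(\iota_s)$ recalled just above the lemma). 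The ``result'' of the chain is the class $[\varphi^{(0)}]\in\pi^{\bf S}_{2s-1}(X_0)=E^1_{s-k,s+k-1}$.

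The second step is to recognize $[\varphi^{(0)}]$ as a representative of $d^k(\iota_s)$. A chain of compressions is the same datum as a lift $x_k\in\pi^{\bf S}_{2s}(\C P^s,\C P^{s-k})$ of the generator $\iota_s\in\pi^{\bf S}_{2s}(\C P^s,\C P^{s-1})$: such a relative class is represented by a map $(D^{2s},\S^{2s-1})\to(\C P^s,\C P^{s-k})$ lying over the characteristic map of the top cell, and its restriction to $\S^{2s-1}$ is a compression of $\varphi$ into $\C P^{s-k}$, that is (via $\C P^{s-k}/\C P^{s-k-1}=X_0$) into the bottom cell; conversely every such compression arises this way. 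By the description recalled just above the lemma, $d^k(\iota_s)$ is represented in $E^1_{s-k,s+k-1}$ by the image of $\partial x_k$ under $\pi^{\bf S}_{2s-1}(\C P^{s-k})\to\pi^{\bf S}_{2s-1}(\C P^{s-k}/\C P^{s-k-1})$, and this image is exactly $[\varphi^{(0)}]$. Hence any two results of deformations are two representatives of the one class $d^k(\iota_s)\in E^k_{s-k,s+k-1}$.

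The final step is formal. In the spectral sequence of the truncated complex $X=\C P^s/\C P^{s-k-1}$ no nonzero differential leaves the corner position $(s-k,s+k-1)$ --- its target would correspond to a cell of dimension below $2(s-k)$, of which $X$ has none --- so $Z^r_{s-k,s+k-1}=E^1_{s-k,s+k-1}$ for all $r$ and $E^r_{s-k,s+k-1}=E^1_{s-k,s+k-1}/B^r_{s-k,s+k-1}$. Consequently two representatives of $d^k(\iota_s)$ differ exactly by an element of $B^k_{s-k,s+k-1}$, and since $B^1=0$ and $B^{i+1}/B^i=\img(d^i)$ this subgroup is generated by (representatives of) the images of $d^1,\dots,d^{k-1}$ landing in this position, which is the claim.

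The crux --- the step I expect to need the most care --- is exactly this last implication. Taken at face value, two results of deformations differ only by an element of the kernel of the inclusion-induced map $\pi^{\bf S}_{2s-1}(X_0)\to\pi^{\bf S}_{2s-1}(X_{k-1})$, and it is not evident a priori that this kernel is no larger than $\sum_{i<k}\img(d^i)$; routing the comparison through the well-defined class $d^k(\iota_s)\in E^k$ is precisely what forces the indeterminacy to come out right. A direct, compression-by-compression bookkeeping argument also works, but it is more delicate: at an intermediate stage $j$ the discrepancy lives in $\img(\partial_j)\subseteq\pi^{\bf S}_{2s-1}(X_{j-1})$ (where $\partial_j\colon\pi^{\bf S}_{2s}(X_j/X_{j-1})\to\pi^{\bf S}_{2s-1}(X_{j-1})$ is the boundary of the cofibration $X_{j-1}\to X_j\to X_j/X_{j-1}$), and pushing it down to $X_0$ forces a compatible re-choice of the remaining compressions.
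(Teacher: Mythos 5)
Your argument is correct and follows essentially the same route as the paper, which treats the lemma as an immediate consequence of the definition of the spectral sequence differentials: a deformation into the bottom cell is the same datum as a lift of $\iota_s$ to $\pi^{\bf S}_{2s}(\C P^s,\C P^{s-k})$, its result represents $d^k(\iota_s)$ in $E^1_{s-k,s+k-1}$, and at this corner position $E^k_{s-k,s+k-1}=E^1_{s-k,s+k-1}/B^k_{s-k,s+k-1}$ with $B^k$ generated by the images of $d^i$, $i<k$. (Your worry in the last paragraph in fact dissolves directly: by exactness the kernel of $\pi^{\bf S}_{2s-1}(\S^{2(s-k)})\to\pi^{\bf S}_{2s-1}(\C P^{s-1}/\C P^{s-k-1})$ is precisely $\img\bigl(\partial:\pi^{\bf S}_{2s}(\C P^{s-1},\C P^{s-k})\to\pi^{\bf S}_{2s-1}(\C P^{s-k},\C P^{s-k-1})\bigr)=B^k_{s-k,s+k-1}$, which is exactly the paper's formula for the boundaries.)
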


From this point of view the vanishing of $d^i(\iota_s)$ for $i<k$ means that after contracting the lowest (positive) dimensional cell to a point in $\C P^s/\C P^{s-k-1}$ (that is, after forming the space $\C P^s/\C P^{s-k}$) the top cell splits off stably in the sense that there is a stable map $\S^{2s} \nrightarrow \C P^s/\C P^{s-k}$ such that composing it with the projection $\C P^s/\C P^{s-k} \to \C P^s/\C P^{s-1} = \S^{2s}$ gives a stable map $\S^{2s} \to \S^{2s}$ of degree $1$. In other words, $\C P^s/\C P^{s-k}$ is $S$-reducible.
\par
It is well-known that $\C P^s/\C P^{s-k}$ is the Thom space $T((s-k+1)\gamma_{k-1})$ of the bundle $(s-k+1)\gamma_{k-1}$ over $\C P^{k-1}$. By a result of Atiyah and Adams $T(m\gamma_{k-1})$ is $S$-reducible if and only if $m+k$ is divisible by the number $M_k$ defined in Section \ref{section:periodicity} (see \cite[Theorem 4.3. e)]{Mosher}).

\par
Recall (\cite{Mosher} collecting results of Adams and Walker \cite{AdamsWalker}; James; Atiyah and Todd \cite{AtiyahTodd}) that

\begin{enumerate}
\item the Thom spaces $T(p\gamma_{k-1})$ and $T(q\gamma_{k-1})$ are $S$-dual if $p+q+k \equiv 0 \mod M_k$;
\item $T(n\gamma_{k-1})$ is $S$-coreducible exactly if $n \equiv 0 \mod M_k$; and
\item $T(m\gamma_{k-1})$ is $S$-reducible exactly if $m+k \equiv 0 \mod M_k$.
\end{enumerate}

Let us return to the proof of the theorem. We have seen that $\C P^s/\C P^{s-k} = T\left((s-k+1)\gamma_{k-1}\right)$ is $S$-reducible (hence $s+1 \equiv 0 \mod M_k$ by property $(3)$). Let us take an $n$ such that $\C P^{n+k}/\C P^{n-1} = T\left((n\gamma_{k-1})\right)$ is $S$-dual to $\C P^s/\C P^{s-k-1} = T\left((s-k)\gamma_k\right)$, that is, $n+(s-k)+(k+1) \equiv 0 \mod M_{k+1}$. Assume that $s+1 \equiv tM_k \mod M_{k+1}$ and correspondingly $n \equiv -tM_k \mod M_{k+1}$. We can choose $n$ to be greater than $k$.
\par
Note that the $S$-dual of $\C P^s/\C P^{s-k}$ is $\C P^{n+k-1}/ \C P^{n-1}$ by Lemma \ref{lemma:restriction}. Since $\C P^s/\C P^{s-k}$ is $S$-reducible, its $S$-dual $\C P^{n+k-1}/ \C P^{n-1}$ is $S$-coreducible. Then Lemma \ref{lemma:coreduciblebouquet} together with the condition $n>k$ imply that $\C P^{n+k-1}/ \C P^{n-1}$ is homotopy equivalent to $\S^{2n} \vee (\C P^{n+k-1}/ \C P^n)$.
\par
Since the complex $\C P^{s}/ \C P^{s-k-1}$ becomes $S$-reducible after collapsing the bottom cell, we can consider a $2$-cellification $X=D^{2s} \underset{\alpha}{\cup} \S^{2(s-k)}$ formed by the top and bottom cells. There is a map $f: X \to \C P^s/ \C P^{s-k-1}$ inducing isomorphism in the homologies in dimensions $2s$ and $2(s-k)$. Recall (see the geometric definition of the differentials in the beginning of the proof of Theorem \ref{thm:imJ} and Lemma \ref{lemma:differential}) that here the attaching map $\alpha \in \pi^{\bf S}(2k-1)$ of $X$ is well-defined only up to the choice of the deformation of the attaching map of the top cell, that is, up to an element of the subgroup generated by the images of the lower differentials $d^i$, $i=1,\dots,k-1$. Consider the cofibration
\begin{equation}\tag{*}\label{eq:Xcofibration}
X \to \C P^s/ \C P^{s-k-1} \to \C P^{s-1}/ \C P^{s-k}
\end{equation}
and its $S$-dual cofibration
\begin{equation}\tag{**}\label{eq:Ycofibration}
Y \leftarrow \C P^{n+k}/ \C P^{n-1} \leftarrow \C P^{n+k-1}/ \C P^n.
\end{equation}
By Lemma \ref{lemma:selfDual}, the space $Y$ is also a $2$-cell complex, with the same (stable) attaching map $\alpha$. Note that the cofibration \eqref{eq:Ycofibration} demonstrates the $S$-coreducibility of $\C P^{n+k-1}/\C P^{n-1}$, since after omitting the top cell in both $\C P^{n+k}/\C P^{n-1}$ and in $Y$ the cofibration \eqref{eq:Ycofibration} gives a retraction of $\C P^{n+k-1}/\C P^{n-1}$ onto its bottom cell, $\S^{2n}$. The attaching map of $Y$ is the composition of the attaching map of the top cell of $\C P^{n+k}/\C P^{n-1}$ with this retraction.
\par
Let us denote by $pr: \C P^k \to \C P^k/\C P^{k-1} = \S^{2k}$ the projection. Consider the following diagram (see \cite{Mosher}), where $\varepsilon^d$ denotes the trivial bundle of rank $d$:
\begin{equation}\label{eq:diagramMain}
\begin{split}
\xymatrix{
&\left[n\gamma_k - \varepsilon^n\right] \ar@{}[r]|{\in} \ar[ddl] & \tilde K_{\C}(\C P^k) \ar[d]& \\
& & \tilde K_{\R}(\C P^k) \ar[d]_J & \tilde K_{\R}(\S^{2k}) \ar[l]_{pr^*}\ar[d]_J \\
0 \ar@{}[r]|{\in} & J(\C P^{k-1}) & J(\C P^k) \ar[l] & J(\S^{2k}) \ar[l]_{pr^*}\\
}
\end{split}
\end{equation}
Since $n$ is a multiple of $M_k$, the restriction of the class $\left[n\gamma_k - \varepsilon^n\right] \in \tilde K_{\C}(\C P^k)$ to $\C P^{k-1}$ represents the zero class in $J(\C P^{k-1})$. The bottom row of the diagram \eqref{eq:diagramMain} is exact, so the image of this class in $J(\C P^k)$ (namely $J(n\gamma_k)$) belongs to the image $pr^*(J(\S^{2k}))$. The map $J: \tilde K_{\R}(\S^{2k}) \to J(\S^{2k})$ is surjective by definition, so there exists a (real) vector bundle $\xi$ over $\S^{2k}$ such that the class $[\xi-\varepsilon^{\rk \xi}] \in \tilde K_{\R}(\S^{2k})$ is mapped by $J \circ pr^* = pr^* \circ J$ to $J(n\gamma_k-\varepsilon^n)$. Hence $J(pr^*\xi) = J(n\gamma_k)$. This implies that $pr^*\xi$ and $n\gamma_k$ have stably homotopically equivalent Thom spaces, i.e. (possibly after stabilization to achieve $\rk \xi=2n$) we have $T(pr^*\xi) \cong T(n\gamma_k) = \C P^{n+k}/\C P^{n-1}$.
\par
Note that the space $T(pr^*\xi)$ inherits a cell decomposition from $\C P^k$ in which the top cell corresponds to (the disc bundle of $\xi$ over) the top cell of $\C P^k$. Omitting this top cell gives us the Thom space of the bundle $\left(pr^*\xi\right)|_{\C P^{k-1}} = pr^*\left(\xi|_{\{point\}}\right)$, and this space is coreducible in an obvious way: mapping all the fibers onto the $\S^{2n}$ over the $0$-cell is a retraction onto this $\S^{2n}$. Putting back the top cell and composing its attaching map with the retraction we obtain a $2$-cell complex, which is in fact $T\xi$. The attaching map of $T\xi$ is known to lie in $\img J$ (see eg. \cite{Hatcher}), so it only remains to compare it with the attaching map of $Y$ (which is $d^k(\iota_s)$ modulo the images of the lower differentials). Even though both $Y$ and $T\xi$ are obtained by the same $2$-cellification procedure from stably homotopically equivalent spaces $\C P^{n+k}/\C P^n$ and $Tpr^*\xi$ respectively, the construction depended on the choice of a retraction from the definition of coreducibility of the involved subspaces $\C P^{n+k-1}/\C P^{n-1}$ and $pr^*\xi|_{\C P^{k-1}}$, and there is no reason why the two retractions should coincide.
\par
By Lemma \ref{lemma:dual2cell}, this choice of coreduction of $\C P^{n+k-1}/\C P^{n-1}$ introduces exactly the same ambiguity in the definition of the $2$-cellification of $\C P^{n+k}/\C P^{n-1}$ as the choice of (stable) reducibility of the $S$-dual complex $\C P^s/\C P^{s-k}$, or equivalently the choice of a deformation of the attaching map of the top cell in $\C P^s/\C P^{s-k-1}$ into the bottom cell. By Lemma \ref{lemma:differential}, this choice corresponds exactly to changing our map by an element of the images of the previous differentials $\img d^i$, $i<k$. Hence the attaching map $\alpha$ of the space $X$ lies in the same coset of $\langle \img d^i: i<k \rangle$ as does the attaching map of $T\xi$, which belongs to $\img J$. This finishes the proof. 
\end{proof}
\par
The proof above clarifies the notions behind Mosher's argument not appearing in \cite{Mosher} explicitly, such as $2$-cellification and its dependence on the choice of deformations. Below we give a shorter second proof that hides the geometric machinery within the general framework of spectral sequences.
\begin{proof}
Consider the spectral sequence in stable homotopy associated to the filtration (of length $k+1$)
$$
\C P^{s-k}/\C P^{s-k-1} \subset \C P^{s-k+1}/\C P^{s-k-1} \subset \dots \subset \C P^{s}/\C P^{s-k-1}.
$$
This spectral sequence maps naturally to the spectral sequence of the infinite filtration $\C P^{s-k}/\C P^{s-k-1} \subset \C P^{s-k+1}/\C P^{s-k-1} \subset \dots$ of $\C P^\infty/\C P^{s-k-1}$.
Taking $S$-duals of all the involved spaces and maps, we obtain a sequence of maps that we call a cofiltration:
$$
\C P^{n+k}/\C P^{n+k-1} \leftarrow \C P^{n+k}/\C P^{n+k-2} \leftarrow \dots \leftarrow \C P^{n+k}/\C P^{n-1}
$$
and the spectral sequence $E^r_{i,j}$, $s-k \leq i \leq s$ of the original filtration becomes the spectral sequence $E^{p,q}_r$ of this cofiltration in the stable \emph{cohomotopy} theory, with $E^r_{s-k+p,s-k+q}$ identified with $E_r^{-n-p,-n-q}$ (in particular, $E_1^{p,q}=\pi^{\bf s}(p-q)$ if $-n-k \leq p \leq -n$ and is $0$ otherwise). Explaining more accurately, let $C$ be the middle point of the interval $[-n,s-k]$ on the horizontal axis in the plane. Then reflection in $C$ transforms the groups $E^r_{i,j}$ and the differentials $d^i$ of the original first quadrant spectral sequence into the groups $E^{p,q}_r$ and the differentials $d_i$ of a third quadrant spectral sequence. This latter spectral sequence is that of the $S$-dual cofiltration in stable cohomotopy, because $S$-duality maps stable homotopy groups into stable cohomotopy groups and commutes with homomorphisms induced by mappings. That is, if $A \to X \to X/A$ is a cofibration and $D_S[A] \leftarrow D_S[X] \leftarrow D_S[X/A]$ is the $S$-dual cofibration then the exact sequence in stable homotopy of the first cofibration is canonically isomorphic to the exact sequence in stable cohomotopy sequence of the second:
$$
\xymatrix{
\pi_*^{\bf s}(A) \ar[r] \ar[d]_{\cong}& \pi_*^{\bf s}(X) \ar[r] \ar[d]_{\cong} & \pi_*^{\bf s}(X/A) \ar[d]_{\cong} \\
\pi_*^{\bf s}(D_S[A]) \ar[r]& \pi_*^{\bf s}(D_S[X]) \ar[r] & \pi_*^{\bf s}(D_S[X/A]) \\
}
$$

In this cohomological spectral sequence the differentials $d_1,\dots,d_{k-1}$ going from the column number $-n-k$ all vanish (because $\C P^{n+k-1}/\C P^{n-1}$ admits a retraction onto $\C P^n/\C P^{n-1}$). The last differential $d_k$ maps the class of the identity in $\pi^{\bf S}_{2n+2k}(\C P^{n+k}/\C P^{n+k-1})$ into the attaching map of the top cell. Choosing any (cellular) $2$-cellification map $\C P^{n+k}/\C P^{n-1} \to Z = \S^{2n} \cup D^{2n+2k}$ induces a map of cofiltrations
$$
\xymatrix{
\C P^{n+k}/\C P^{n+k-1} \ar[d] & \C P^{n+k}/\C P^{n+k-2} \ar[l] \ar[d] & \dots \ar[l] & \C P^{n+k}/\C P^{n} \ar[l] \ar[d] & \C P^{n+k}/\C P^{n-1} \ar[l] \ar[d] \\
Z/\S^{2n} & Z/\S^{2n} \ar[l]_{id} & \dots \ar[l]_{id} & Z/\S^{2n} \ar[l]_{id} & Z \ar[l]
}
$$
and hence it induces a map of the corresponding cohomotopical spectral sequences that is an isomorphism on column $-n-k$ in the pages $E^{**}_1$, $E^{**}_2$, $\dots$, $E^{**}_{k-1}$, with ``the same'' differential $d_k$. That is, the attaching map of $Z$ is a representative of the image $d_k\iota^{-n-k,-n-k}$ (where $\iota^{-n-k,-n-k}$ is the positive generator of $E_1^{-n-k,-n-k} \cong \Z$), which coincides with $d^k\iota_{s,s}$, and it lies in $\img J$.
\end{proof} 

Theorem \ref{thm:imJ} has a $p$-localized version that we formulate and prove below. Let us consider the \emph{ $p$-localized Mosher's spectral sequence} ${}^pE^r_{i,j}$, ${}^pd^r$, that is, the spectral sequence defined by the $p$-localization of the usual filtration of $\C P^\infty$. In this spectral sequence the starting groups are the $p$-components of those in Mosher's spectral sequence and the $d^1$ differential is the $p$-component of that in Mosher's spectral sequence.

\begin{thm}\label{thm:imJ_p}
Let $\iota_s \in \pi^{\bf S}_{2s}(\C P^s/\C P^{s-1}) \otimes \Z_{(p)} = {}^pE^1_{s,s} \cong \Z_{(p)}$ be a generator and suppose that ${}^pd^i(\iota_s)=0$ for $i<k$. Then ${}^pd^k(\iota_s) \in {}^pE^k_{s-k,s+k-1}$ belongs to the image of $\img J_p \subset {}^pE^1_{s-k,s+k-1} = \pi^{\bf S}(2k-1) \otimes \Z_{(p)}$ in ${}^pE^k_{s-k,s+k-1}$.
\end{thm}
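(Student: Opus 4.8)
The plan is to carry out the proof of Theorem \ref{thm:imJ} verbatim but in the $p$-local category, systematically replacing the Atiyah--Todd number $M_k$ by its $p$-component $(M_k)_p$, the group $J$ by $J_p$, and ordinary $S$-duality and $S$-(co)reducibility by their $p$-local versions (which are available because $S$-duality, the five-lemma, Whitehead's theorem and Puppe sequences all make sense for $p$-local finite spectra). One cannot reduce to the already-proved integral statement, since $^pd^i(\iota_s)=0$ only forces $p$-local $S$-reducibility of the relevant Thom spaces, not genuine $S$-reducibility. The single genuinely new ingredient is Lemma \ref{lemma:JpCP}, which identifies $J_p(\C P^r)$ with the $p$-component of $J(\C P^r)$; everything else is bookkeeping.

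First I would reinterpret the hypothesis. Exactly as in the integral case, $^pd^i(\iota_s)=0$ for $i<k$ says that after $p$-localization the top cell of $\C P^s/\C P^{s-k}=T\bigl((s-k+1)\gamma_{k-1}\bigr)$ splits off, i.e.\ this Thom space is $p$-locally $S$-reducible. Passing to its $S$-dual $T(q\gamma_{k-1})$ (with $s+1+q\equiv 0\pmod{(M_k)_p}$), $p$-local $S$-coreducibility of $T(q\gamma_{k-1})$ is equivalent to $p$-local fibrewise triviality of $q\gamma_{k-1}$, i.e.\ to $J_p(q\gamma_{k-1})=0$; by Lemma \ref{lemma:JpCP} this holds exactly when $(M_k)_p\mid q$. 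Translating back, the hypothesis is equivalent to $s+1\equiv 0\pmod{(M_k)_p}$. Using the periodicity modulo $p$ established above, I then choose, just as in the proof of Theorem \ref{thm:imJ}, an integer $n>k$ with $n+s+1\equiv 0\pmod{(M_{k+1})_p}$, so that $T(n\gamma_k)=\C P^{n+k}/\C P^{n-1}$ is $p$-locally $S$-dual to $\C P^s/\C P^{s-k-1}=T((s-k)\gamma_k)$, and moreover $(M_k)_p\mid n$.

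Next I would run the $K/J$-theory argument of diagram \eqref{eq:diagramMain} with $J$ replaced by $J_p$. Because $(M_k)_p\mid n$, the class $[n\gamma_k-\varepsilon^n]\in\tilde K_\C(\C P^k)$ restricts to $0$ in $J_p(\C P^{k-1})$ (Lemma \ref{lemma:JpCP}). The sequence $J(\S^{2k})\xrightarrow{pr^*}J(\C P^k)\to J(\C P^{k-1})$ is exact, and since taking $p$-components is an exact functor on finite abelian groups, the kernel of $J_p(\C P^k)\to J_p(\C P^{k-1})$ equals $pr^*$ of the $p$-component of $J(\S^{2k})$ --- note that I deliberately never need to know anything about a ``geometric'' $J_p(\S^{2k})$, only about the image of $pr^*$. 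Hence $J_p(n\gamma_k)=pr^*(z)$ for some $z$ of $p$-power order in $J(\S^{2k})$, and the surjectivity of $J\colon\tilde K_\R(\S^{2k})\to J(\S^{2k})$ lets me realize $z$ by an honest real vector bundle $\xi$ over $\S^{2k}$ with $J_p(pr^*\xi)=J_p(n\gamma_k)$. Therefore $T(pr^*\xi)$ and $T(n\gamma_k)=\C P^{n+k}/\C P^{n-1}$ are $p$-locally stably homotopy equivalent; omitting the top cell of $T(pr^*\xi)$, retracting the resulting Thom space of a trivial bundle over $\C P^{k-1}$ onto the $\S^{2n}$ over the $0$-cell, and re-attaching the top cell gives a $2$-cellification of $T(pr^*\xi)$ equal to $T\xi$, whose stable attaching map lies in $\img J$ by the standard computation of the attaching map of a sphere bundle over a sphere.

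Finally I would compare the two $2$-cellifications. As in the integral argument, the $S$-dual of the cofibration $X\to\C P^s/\C P^{s-k-1}\to\C P^{s-1}/\C P^{s-k}$ exhibits a $2$-cell complex $Y$ $S$-dual to $X$ with the same stable attaching map $\alpha$ (Lemmas \ref{lemma:selfDual} and \ref{lemma:restriction}), and $\alpha$ represents $^pd^k(\iota_s)$ modulo $\langle\img{}^pd^i:i<k\rangle$ (the $p$-local form of Lemma \ref{lemma:differential}). By the $p$-local form of Lemma \ref{lemma:dual2cell}, the choice of $p$-local coreduction of $\C P^{n+k-1}/\C P^{n-1}$ --- equivalently the choice of deformation of the top-cell attaching map of $\C P^s/\C P^{s-k-1}$ into the bottom cell --- changes the resulting attaching map exactly by an element of $\langle\img{}^pd^i:i<k\rangle$. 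Since $Y$ and $T\xi$ are $p$-locally stably equivalent $2$-cellifications of $p$-locally stably equivalent spaces, $\alpha$ and the attaching map of $T\xi$ lie in the same coset of $\langle\img{}^pd^i:i<k\rangle$; as the latter lies in $\img J$, we get $^pd^k(\iota_s)\in\img J_p\subset{}^pE^k_{s-k,s+k-1}$. The main obstacle is not a single hard step but verifying that each structural lemma of Section \ref{section:stable} (Lemmas \ref{lemma:coreduciblebouquet}, \ref{lemma:dual2cell}, \ref{lemma:selfDual}, \ref{lemma:restriction}, \ref{lemma:differential}, and $S$-duality itself) has a valid $p$-local analogue, together with the care needed to work throughout in the $p$-local category because the hypothesis only yields $p$-local $S$-reducibility; the one place where genuinely new input enters is Lemma \ref{lemma:JpCP}.
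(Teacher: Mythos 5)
Your proposal is correct in outline and arrives at the theorem by the same master strategy as the paper's proof (dualize, use Lemma \ref{lemma:JpCP} to replace $n\gamma_k$ by the pullback $pr^*\xi$ of a bundle over $\S^{2k}$, and compare $2$-cellifications so that the attaching map of $T\xi$, which lies in $\img J$, represents ${}^pd^k(\iota_s)$ modulo the images of the lower differentials), but your technical route is genuinely different. You localize at $p$ from the outset and invoke $p$-local analogues of $S$-duality and of the structural lemmas of Section \ref{section:stable}; the paper instead stays in the ordinary stable category: since the top-cell attaching map of $\C P^s/\C P^{s-k-1}$ is only $p$-locally trivial on the intermediate cells, it multiplies that attaching map by a $q$ coprime to $p$ to build an honest complex $X$ (and, via Lemma \ref{lemma:multiplication}, its honest $S$-dual $Y$) to which the integral lemmas apply verbatim, compares the $2$-cellification of $Y$ with $T\xi$ by a separate Claim about $2$-cellifications of $p$-equivalent complexes, and only at the end divides the resulting attaching map (which is $q\cdot d^k\iota_s$ modulo lower differentials, read in the $p$-component) by $q$. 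Your version buys a cleaner argument with no degree-$q$ bookkeeping and no need for Lemma \ref{lemma:multiplication} or the Claim, at the price of actually re-establishing the Section \ref{section:stable} lemmas for $p$-local finite spectra (routine, since their proofs rest on $\Z_{(p)}$-homology, the five-lemma and Whitehead's theorem, but it must be done, and you rightly flag it); the paper's version keeps every duality argument integral and confines $p$-equivalence to one explicit comparison step. One caveat you share with the paper: converting the hypothesis ${}^pd^i(\iota_s)=0$ for $i<k$ into the divisibility $(M_k)_p \mid s+1$ (equivalently $(M_k)_p\mid n$) uses not only Lemma \ref{lemma:JpCP} but also the implication that $p$-local coreducibility of $T(q\gamma_{k-1})$ forces $J_p$-triviality, i.e.\ the $p$-primary form of the Atiyah--Todd/Adams--Walker criterion; this is true, but it is an input from the literature rather than a formal consequence of the lemma, so it deserves an explicit citation in a written-up version.
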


{\bf Setup:} (the diagram below can help to follow the argument) $\C P^s/\C P^{s-k-1}$ is $S$-dual to $\C P^{n+k}/\C P^{n-1}$ (ensured by $n+s \equiv 0 (M_k)$), and the first nonzero $p$-localized differential from $E^*_{s,s}$ is the $k$-th, that is, $n$ is divisible by $(M_k)_p$, but not by $(M_{k+1})_p$ (this latter statement holds due to Lemma \ref{lemma:JpCP} establishing the equivalence of the geometric and algebraic definitions of $J_p$, which implies that the order of $\gamma_k$ in $J_p(\C P^k)$ is exactly $(M_k)_p$). Then the top cell of $\C P^s/\C P^{s-k-1}$ is attached to the intermediate cells by maps that are trivial after $p$-localization.

\par
$$
\xymatrix{
\C P^s/\C P^{s-k-1}  \ar@{<->}[d]_S& X \ar[l]_-{\deg=q}  \ar@{<->}[d]_S &  A = D^{2s} \underset{\alpha}{\cup} \S^{2(s-k)} \ar[l]  \ar@{<->}[d]_S \\
\C P^{n+k}/\C P^{n-1} \ar[r]^-{/\Z_q} \ar[d]_{\cong_p} & Y \ar[r] & B = D^{2(n+k)} \underset{\alpha}{\cup} \S^{2n} \\ T\left(pr^*\xi\right) \ar[rr] & & T\xi \\
}
$$
The choice of a map $f: D^{2s} \cup \S^{2s-2k} \to \C P^s/\C P^{s-k-1}$ inducing isomorphism in homology in dimensions $2s$ and $2s-2k$ is equivalent to the choice of representative of $\img d_k \iota$ in the coset of the image of all previous differentials going to the same cell (Section \ref{section:SS} and Lemma \ref{lemma:differential}) and is the same as the choice of the map $\S^{2s-1} \to \S^{2s-2k}$ homotopic to the attaching map of the top cell, see Section \ref{section:stable}.
\par
{\it Proof of Theorem \ref{thm:imJ_p}:} We first construct a space $X$ with the same cells as $\C P^s/\C P^{s-k-1}$, where the top cell is actually attached (homotopically nontrivially) only to the bottom cell (not only after $p$-localization), and a map $X \to \C P^s/\C P^{s-k-1}$ that is a $p$-equivalence. Such a space and map exist because the attaching map of the top cell to the intermediate cells is homotopically $p$-trivial, so there is a multiple ($q$-tuple, say) of it that is actually trivial, and adding the last cell with this new attaching map (the attaching map of the top cell multiplied by $q$) we obtain $X$ and a degree $q$ map from $X$ to $\C P^s/\C P^{s-k-1}$ which is the identity when restricted to the $2s-2$-skeleton and a $p$-equivalence when it is restricted to the attached $2s$-cell relative to its boundary, so by the $5$-lemma it is a $p$-equivalence. Denote by $A$ the space formed by the bottom and the top cells of $X$ (a $2$-cellification of $X$). By Lemma \ref{lemma:multiplication}, the $S$-dual of $X$, which we shall denote by $Y$, can be obtained from $D_S[\C P^s/\C P^{s-k-1}] = \C P^{n+k}/\C P^{n-1}$ by wrapping the bottom cell on itself by a degree $q$ map.
\par
By Lemma \ref{lemma:dual2cell}, the possible choices of $A$ (that correspond to different representatives of $\img d_k\iota$ modulo the image of the previous differentials) are $S$-dual to the possible choices of $2$-cellifications of $Y$. In addition, choosing the space $X$ differently does not change the image of (the homotopy class of) the attaching map in the $p$-component (after dividing it by $q$), since for any two approximations there is a common ``refinement'' that factors through both maps $X\to \C P^{n+k}/\C P^{n-1}$. That means that the $2$-cellifications of $Y$ (denoted by $B$ in the diagram) have the attaching map $q \cdot d_k \iota$ modulo the image of the previous differentials and considered in the $p$-component.

\par
We claim that $A$ is $p$-equivalent to the Thom space of a vector bundle over the sphere $\S^{2k}$. To deduce this, we check that over $\C P^{k-1}$ the bundle $n\gamma_{k-1}$ (whose Thom space is $\C P^{n+k-1}/\C P^{n-1}$) is $p$-trivial in the sense that its Thom space is stably $p$-equivalent to the Thom space of the trivial bundle. By equivalence of geometric and algebraic definitions of $J_p$ (Lemma \ref{lemma:JpCP}) it is enough to check that $J(n\gamma_{k-1})$ has order coprime to $p$ and hence vanishes when considered in the $p$-component of $J(\C P^{k-1})$. Indeed, $n$ is divisible by $(M_k)_p$ and consequently $J(n\gamma_{k-1})$ has order coprime to $p$. Hence the $J_p$-image of $n\gamma_k$ is the same as that of the pullback $pr^* \xi$ of a bundle $\xi$ over $\S^{2k}$. From this, we want to conclude that the attaching map in $T\xi$ is ``the same'' as the attaching map in the $2$-cellification $Y$. Both $T(pr^*\xi)$ and $Y$ are coreducible after the removal of the top cell: $Y$ by construction/definition. The space $T(pr^*\xi)$ is coreducible after removal of its top cell because $(pr^*\xi) |_{\C P^{k-1}}$ is trivial and hence $T((pr^*\xi) |_{\C P^{k-1}})$ retracts to $T\varepsilon_{*}^{n} = \S^{2n}$.
\par
{\bf Claim:} Let $U$ and $V$ be $p$-equivalent cell complexes, a single top cell $U_{n+d}$ and $V_{n+d}$, respectively, a single bottom cell $U_{n}$ and $V_{n}$, respectively, and assume that $H_{n+d}(U)$ is generated by $U_{n+d}$, $H_{n}(U)$ is generated by $U_{n}$, $H_{n+d}(V)$ is generated by $V_{n+d}$ and $H_{n}(V)$ is generated by $V_{n}$. Assume that both $U_- = U \setminus U_{n+d}$ and $V_-=V \setminus V_{n+d}$ are coreducible, that is, there exist retractions $\rho_U: U_- \searrow U_{n}$ and $\rho_V: V_- \searrow V_{n}$. Then the $2$-cellifications of $U$ and $V$ are $p$-equivalent modulo the choice of the involved retraction, that is, one can replace the retractions $\rho_U$ and $\rho_V$ by some other retractions $\rho'_U$ and $\rho'_V$ such that the $2$-cell complexes formed from $U$ and $V$ using $\rho'_U$ and $\rho'_V$ are $p$-equivalent.
\par
{\bf Remark:} Applying this Claim to $U=Y$ and $V=T(pr^*\xi)$ we obtain that the $2$-celllifications $B$ and $T\xi$ are $p$-equivalent. In particular, the attaching maps of the $2$-cellifications coincide modulo the images of the lower differentials. For $T\xi$ this attaching map belongs to $\img J$, for $B$ the attaching map is $\alpha$, which is $d^k(\iota_s)$ (modulo lower differentials). Thus Theorem \ref{thm:imJ_p} will be proved as soon as we prove the Claim.
\par
{\it Proof of Claim:} Let $i: U \to V$ be a $p$-equivalence. Without loss of generality, it restricts to a $p$-equivalence of $U_-$ and $V_-$, and maps $U_n$ into $V_n$. If $i|_{U_n}$ were an actual homotopy equivalence onto $V_n$, then we could assume that it is in fact a homeomorphism and we could set $\rho'_U = i^{-1} \circ \rho_V \circ i$. With this choice, $\rho'_U$ is a retraction of $U_-$ onto $U_n$ and is compatible with $i$ in the sense that $i$ descends to a map from the $2$-cellification of $U$ to the $2$-cellification of $V$. This map has the same induced maps in $H_{n}$ and $H_{n+d}$ as $i$ does and is hence a $p$-equivalence.
\par
In general, $i|_{U_n}$ only needs to be a degree $q$ map to $V_n$ for some $q$ coprime to $p$. Let $Cyl_U$ denote the mapping cylinder of $i|_{U_n}$. Define $\hat U$ to be $U$ glued to $Cyl_U$ along $U_n$ (essentially, wrapping the bottom cell of $U$ onto itself by a degree $q$ map). Then $i$ extends naturally to a map $\hat \imath : \hat U \to V$, which is still a $p$-equivalence (checked easily in homology). The retraction $\rho_U$ also extends naturally to $\hat U_- = \hat U\setminus U_{n+d}$ by postcomposing with $i|_{U_n}$. The $2$-cellification of $\hat U$ has an attaching map that is the composition of the attaching map of the $2$-cellification of $U$ with a degree $\deg i|_{U_n}$ map of the bottom cell; this composition does not change the homotopy $p$-type of the glued-together space since we compose with a $p$-equivalence of the bottom cell.
\par
By the first paragraph of the proof the $2$-cellifications of $\hat U$ and $V$ are $p$-equivalent, the $2$-cellifications of $U$ and $\hat U$ are also $p$-equivalent, hence the $2$-cellifications of $U$ and $V$ are $p$-equivalent as well, finishing the proof.


\section{The exact value of the first non-zero differential}

Recall (Subsection \ref{section:singSS}) that vanishing of differentials $d^1$, $\dots$, $d^{k-1}$ on $\iota_s \in E^1_{s,s} = \pi^{\bf S}(0) \cong \Z$ means that taking a germ $\varphi: (\R^{2(s-1)},0) \to (\R^{2s-1},0)$ that has an isolated $\Sigma^{1_{s-1}}$-singularity at the origin, its link map $\partial\varphi:\S^{2s-3} \to \S^{2s-2}$ is cobordant (in the class of $\Sigma^{1_{s-2}}$-maps) to a $\Sigma^{1_{s-k-1}}$-map $\partial_k\varphi$. If $d^k$ is the first differential not vanishing on $\iota_s$, then the image $d^k(\iota_s)$ belongs to the image of the subgroup $\img J \subseteq E^1_{**}$ in $E^k_{**}$ and consequently the map $\partial_k\varphi$ can be chosen in such a way that the top (i.e. $\Sigma^{1_{s-k-1}}$-) singularity stratum of $\partial_k\varphi$ is a sphere $\S^{2k-1}$.

\par
How to determine the exact value of this element $d^k\iota_s$? Mosher \cite{Mosher} answered this question using the so-called $e$-invariants of Adams. Before recalling their precise definitions we collect some properties of the $e$-invariants:
\begin{enumerate}
\item There are homomorphisms $e_R$ and $e_C$ from $\pi^{\bf s}(2k-1) \to \Q/\Z$.
\item The invariant $e_R$ gives a decomposition of $\pi^{\bf S}(2k-1)$ in the sense that $\pi^{\bf S}(2k-1) = \img J \oplus \ker e_R$. In particular, the restriction $e_R|_{\img J}$ is injective.
\item \label{eReC} If $k$ is odd, then $e_C=e_R$. If $k$ is even, then $e_C = 2e_R$.
\item Different representatives of $d^k(\iota_s)$ in $\img J \subseteq \pi^{\bf s}(2k-1)$ may have different values of $e_R$, but the $e_C$-value is the same for all of them. Hence $e_C$ is well-defined on the image of $\img J$, in particular, $e_C(d^k(\iota_s))$ is also well-defined.
\item \label{footnoted} The invariant $e_C$ is injective on the image of $\img J$ in $E^k_{**}$. Hence the value $e_C(d^k(\iota_s))$ determines $d^k(\iota_s)$ uniquely\footnote{It also follows that for any $x \in E^k_{**}$ there are at most $2$ elements in $\img J \subset E^1_{**}$ that are mapped by the partially defined map $E^1_{**} \to E^k_{**}$ onto $x$}.
\end{enumerate}
The precise value of $e_C(d^k(\iota_s))$ is given in the next theorem. Recall that $d^k(\iota_s)$ exists if and only if $s+1$ is divisible by $M_k$, and $d^k(\iota_s)$ vanishes if and only if $s+1$ is divisible by $M_{k+1}$.

\begin{thm}{\textnormal{(}\cite[Proposition 4.7]{Mosher}\textnormal{)}}\label{thm:formulaMosher}
If $s+1 \equiv tM_k$ modulo $M_{k+1}$, then $e_C(d^k\iota_s) = t \cdot u_k$,
where $u_k$ is the coefficient of $z^k$ in the Taylor expansion of
$\left(\frac{\log (1+z)}{z}\right)^{M_k}$.
\end{thm}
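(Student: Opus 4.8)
The plan is to reduce the statement, via Theorem \ref{thm:imJ}, to a Chern character computation and then to match the resulting power--series coefficient with the stated closed form by Lagrange inversion together with the arithmetic of the Atiyah--Todd number. Throughout, for a power series $f(z)$ I write $[z^k]f$ for the coefficient of $z^k$ in $f$. Recall from the proof of Theorem \ref{thm:imJ} that $d^k(\iota_s)$ is represented by the stable attaching map $\alpha\in\pi^{\bf S}(2k-1)$ of a $2$-cellification $Z=\S^{2(s-k)}\cup_\alpha e^{2s}$ of $\C P^s/\C P^{s-k-1}\cong T\bigl((s-k)\gamma_k\bigr)$; such a $2$-cellification exists exactly because $M_k\mid s+1$, which is precisely the condition making $\C P^s/\C P^{s-k}$ $S$-reducible. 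By Theorem \ref{thm:imJ} the map $\alpha$ lies in $\img J$, so by the properties of the $e$-invariant listed above the value $e_C(\alpha)\in\Q/\Z$ is well defined and determines $d^k(\iota_s)$ uniquely; hence it suffices to compute $e_C(\alpha)$ and to verify that it equals $t\cdot u_k$.

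I would compute $e_C(\alpha)$ in complex $K$-theory. By Adams' description of the $e$-invariant, for a $2$-cell complex $\S^{m}\cup_\alpha e^{m+2k}$ the number $e_C(\alpha)$ is, modulo $\Z$, the coefficient of the top cohomology generator in $\operatorname{ch}(\hat a)$, where $\hat a\in\tilde K^0\bigl(\S^m\cup_\alpha e^{m+2k}\bigr)$ is any lift of a generator of $\tilde K^0(\S^m)$ (here one uses that the Chern character of a Bott generator of a sphere is an integral cohomology generator). Let $f\colon Z\to\C P^s/\C P^{s-k-1}$ be the comparison map belonging to the $2$-cellification, and let $\lambda\in\tilde K^0(\C P^s/\C P^{s-k-1})$ be the $K$-theory Thom class of $(s-k)\gamma_k$; since $\lambda$ restricts to a generator on the bottom cell, one may take $\hat a=f^*\lambda$. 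By the Riemann--Roch formula for the Thom isomorphism,
$$
\operatorname{ch}(\lambda)=U\cdot\operatorname{Td}\bigl((s-k)\gamma_k\bigr)^{-1}=U\cdot\left(\frac{e^{t}-1}{t}\right)^{s-k},
$$
with $U$ the ordinary Thom class and $t\in H^2(\C P^k)$ the generator. As $f^*$ annihilates the cohomology of $\C P^s/\C P^{s-k-1}$ in every dimension strictly between $2(s-k)$ and $2s$, pulling back leaves only the degree-$0$ and degree-$k$ parts of $\bigl(\tfrac{e^z-1}{z}\bigr)^{s-k}$, so
$$
e_C(d^k\iota_s)\equiv \pm\,[z^k]\left(\frac{e^{z}-1}{z}\right)^{s-k}\pmod{\Z},
$$
the overall sign being fixed by a careful choice of orientations of the two cells, which I would make explicit.

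It remains --- and here is where the real work lies --- to reconcile this with the formula $e_C(d^k\iota_s)=t\cdot u_k$. Since $\log(1+z)$ is the compositional inverse of $e^z-1$, the Lagrange--B\"urmann formula gives the identity
$$
[z^k]\left(\frac{e^{z}-1}{z}\right)^{s-k}=\frac{s-k}{s}\,[z^k]\left(\frac{z}{\log(1+z)}\right)^{s},
$$
which already rewrites everything through the series $\tfrac{\log(1+z)}{z}$. By the periodicity of Mosher's spectral sequence modulo $M_{k+1}$ (Proposition \ref{thm:periodicity} with $k+1$ in place of $k$) one may assume $s=tM_k-1$; using $[z^k]\tfrac{\log(1+z)}{z}=\tfrac{(-1)^k}{k+1}$ one checks that the right-hand side is integral at $t=0$ (that is, at $s=-1$) and that the step $s\mapsto s+M_k$ increases it, modulo $\Z$, by exactly $u_k=[z^k]\bigl(\tfrac{\log(1+z)}{z}\bigr)^{M_k}$, so that induction on $t$ produces $t\cdot u_k$. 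Both congruences ultimately rest on the Adams--Walker / Atiyah--Todd theorem that $M_k$ is the order of $\gamma_{k-1}$ in $J(\C P^{k-1})$: this is what forces the relevant inverse Todd series of $M_k\gamma_{k-1}$ (respectively of $M_{k+1}\gamma_k$) to have integral coefficients in cohomological degrees $\le k-1$ (respectively $\le k$), which is exactly what is needed to control the above coefficients modulo $\Z$. Carrying out this last step rigorously --- prime by prime, with the explicit formula for $\nu_p(M_k)$ and the standard $p$-adic estimates for factorials and binomial coefficients --- is the main obstacle, and it is also the step Mosher treats most tersely; once it is in place, the injectivity of $e_C$ on the image of $\img J$ (the last property listed above) upgrades the equality in $\Q/\Z$ to a complete determination of $d^k(\iota_s)$. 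One could equally run the whole computation on the $S$-dual complex $\C P^{n+k}/\C P^{n-1}\cong T(n\gamma_k)$ with $n\equiv -tM_k\pmod{M_{k+1}}$, using Lemma \ref{lemma:selfDual} to identify the attaching maps; the arithmetic is unchanged.
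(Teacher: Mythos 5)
Your first half is essentially sound, and it is really the paper's own computation transplanted from the $S$-dual side to the original complex: the class $f^*\lambda$ is a lift of the bottom Bott generator on the $2$-cellification, and the Thom-class/Riemann--Roch calculation correctly yields $e_C(d^k\iota_s)\equiv\pm\,[z^k]\bigl(\tfrac{e^z-1}{z}\bigr)^{s-k}\ (\mathrm{mod}\ \Z)$ (the paper obtains the mirror formula $\pm[z^k]\bigl(\tfrac{\log(1+z)}{z}\bigr)^{n}$ on $\C P^{n+k}/\C P^{n-1}=T(n\gamma_k)$; the two agree, as your Lagrange--B\"urmann identity shows).

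The genuine gap is the last step, which is the actual content of the theorem. The congruence you need --- that $\tfrac{s-k}{s}[z^k]\bigl(\tfrac{z}{\log(1+z)}\bigr)^{s}$ changes by exactly $u_k$ mod $\Z$ under $s\mapsto s+M_k$ --- is asserted, not proved, and it does \emph{not} follow formally from the integrality in degrees $\le k-1$ of $\bigl(\tfrac{\log(1+z)}{z}\bigr)^{M_k}$: your exponent $s$ is not divisible by $M_k$, there is the rational prefactor $\tfrac{s-k}{s}$, and the intermediate coefficients of $\bigl(\tfrac{z}{\log(1+z)}\bigr)^{s}$ (or of $\bigl(\tfrac{e^z-1}{z}\bigr)^{s-k}$) are not integers, so no multiplicativity-mod-$\Z$ argument applies; the ``prime by prime $p$-adic estimates'' you defer are precisely the hard part. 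Moreover, as stated the step is off by a sign: for $k=2$ and $s$ odd one has $\tfrac{s-k}{s}[z^2]\bigl(\tfrac{z}{\log(1+z)}\bigr)^{s}=\tfrac{(s-2)(3s-5)}{24}$, whose increment under $s\mapsto s+2$ is $\tfrac{6s-5}{12}\equiv\tfrac{1}{12}\equiv -u_2$, not $+u_2$; since $tu_k\ne -tu_k$ in $\Q/\Z$ in general, leaving both this sign and the overall sign of $e_C$ open means the claimed value is not actually pinned down. The device that makes this step trivial --- and which your plan omits except for the closing aside --- is the paper's (Mosher's) passage to the $S$-dual Thom space $T(n\gamma_k)$ with $M_k\mid n$: there the relevant $K$-class has integer coordinates $w_j$ in the basis $\mu^{n+j}$, whence the coefficients of $\bigl(\tfrac{\log(1+z)}{z}\bigr)^{M_k}$ below degree $k$ are integers, and $\bigl(\tfrac{\log(1+z)}{z}\bigr)^{tM_k}\equiv 1+tu_kz^k$ modulo integers and higher-order terms by the multinomial theorem, with no further arithmetic input. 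Until your induction step is supplied (or replaced by this dualization), the proof is incomplete at its crucial point.
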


Mosher's exposition is rather compressed and hard to understand. We try to
repeat here his argument in a more comprehensible form.

\subsection{The definition of the $e_C$-invariant}

Given $\alpha \in \pi^{\bf S}(2k-1)$, denote by $X_\alpha$ the (stable homotopy
type of the) $2$-cell complex $D^{2q+2k} \underset{f}{\cup} \S^{2q}$, where
$f: \S^{2q+2k-1} \to \S^{2q}$ is a representative of $\alpha$. The Chern
character induces a map of the short exact sequence corresponding to the
cofibration $\S^{2q} \subset X_\alpha \to \S^{2q+2k}$ in the complex $K$-theory into
that in the rational cohomology rings:
$$
\xymatrix{
0 & K(\S^{2q}) \ar[l] \ar[d]_{ch} & K(X_\alpha) \ar[l]^{i^*} \ar[d]_{ch} &
K(\S^{2q+2k}) \ar[l]^{j^*} \ar[d]_{ch} & 0 \ar[l] \\
0 & H^*(\S^{2q}) \ar[l] & H^*(X_\alpha) \ar[l] & H^*(\S^{2q+2k}) \ar[l] & 0 \ar[l]
}
$$
One can choose generators $\zeta_q$, $\zeta_{q+k}$ in $K(X_\alpha)$ as well
as generators $y_q \in H^q(X_\alpha)$ and $y_{q+k} \in H^{q+k}(X_\alpha)$
such that
\begin{align}
ch~ \zeta_{q+k} &= y_{q+k} \\
ch~ \zeta_q &= y_q + \lambda y_{q+k} 
\end{align}
Here $\lambda$ is a rational number that is well-defined up to shifts by
integers. The $e_C$-invariant of $\alpha$ is defined to be
$$
e_C(\alpha) = \lambda \in \Q/\Z.
$$
The definition of the $e_R$ invariant is similar, using real $K$-theory and then the natural complexification functor $K_\R \to K_\C$.

\subsection{The computation of $e_C(d^k(\iota_s))$.}

Since $d^i(\iota_s)=0$ for $i=1,\dots,k-1$, the attaching map of the top
cell of $\C P^s/\C P^{s-k-1}$ can be deformed into the bottom cell. Hence
by collapsing this bottom cell we obtain a reducible space, that is, $\C
P^s/\C P^{s-k}$ is reducible. This latter space is the Thom space
$T(s-k+1)\gamma_{k-1}$. Since $Ta\gamma_{k-1}$ is reducible precisely when
$a+k$ is divisible by $M_k$, we obtain that $s+1 \equiv 0 \mod M_k$.
Similarly $d^k(\iota_s) = 0$ precisely if $s+1 \equiv 0 \mod M_{k+1}$.
Define $t$ to satisfy $s+1 \equiv tM_k \mod M_{k+1}$.

As we have seen before, $T(p\gamma_{k-1})$ and $T(q\gamma_{k-1})$ are $S$-dual if
$p+q+k \equiv 0 \mod M_k$. Hence $\C P^{n+k-1}/\C P^{n-1} = T(n\gamma_{k-1})$
is $S$-dual to $T((s-k+1)\gamma_{k-1}) = \C P^s/\C P^{s-k}$ if $n \equiv 0
\mod M_k$. Analogously, the $S$-dual of $\C P^s/\C P^{s-k-1}$ is $\C
P^{n+k}/\C P^{n-1}$ if $n \equiv -tM_k \mod M_{k+1}$.

We just saw that $\C P^s/\C P^{s-k}$ is reducible, hence its $S$-dual, $\C
P^{n+k-1}/\C P^{n-1}$ is coreducible, that is, it admits a retraction $r:\C
P^{n+k-1}/\C P^{n-1} \to \C P^n/\C P^{n-1}$ onto its bottom cell $\S^{2n} =
\C P^n/\C P^{n-1}$. Then $\C P^{n+k}/\C P^{n-1}$ admits a $2$-cellification
$X = D^{2n+2k} \underset{\alpha}{\cup} \S^{2n}$ and a map
$$
f:\C P^{n+k}/\C P^{n-1} \to X
$$
that coincides with $r$ on $\C P^{n+k-1}/\C P^{n-1}$ and has degree $1$ on the top $(2n+2k)$-cell,
and the attaching map $\alpha$ is a representative of $d^k(\iota_s)$. Our aim is to calculate the $e_C$-invariant of $\alpha$.

Let $p: X \to X/\S^{2n} = \S^{2n+2k}$ be the quotient map, further let $\textbf w$
be the generator of $K(\S^{2n+2k}) \cong \Z$, and denote by $y$ the
generator of $H^2(\C P^\infty)$. Then $y^j$ generates $H^{2j}(\C P^{n+k}/\C
P^{n-1})$. By definition,
\begin{equation}\tag{*}\label{eq:chpw}
ch~ p^*\textbf w = y^n+e_C(\alpha) y^{n+k}.
\end{equation}
Recall that $K(\C P^{n+k}) \cong \Z[\mu]/(\mu^{n+k+1}=0)$, where $\mu$ is
the class of $\gamma_{n+k}-1$. The ring $K(\C P^{n+k}/\C P^{n-1})$ can be
identified with the subring of polynomials divisible by $\mu^n$, that is,
$K(\C P^{n+k}/\C P^{n-1}) \cong \mu^n \cdot \left( \Z[\mu]/(\mu^{k+1}=0)
\right)$. In particular, $p^* \textbf w$ can be written in the form
\begin{equation}\tag{**}\label{eq:pw}
p^*\textbf w = \mu^n \sum_{j=0}^{k} w_j\mu^j,
\end{equation}
where all $w_j$ are integers (depending on $n$ and $k$) and $w_0=1$.

Let us denote $ch~ \mu = e^y-1$ by $z$, then $y=\log (1+z)$. Note that both
$y$ and $z$ can be chosen as generators in the ring of formal power series
$H^{**}(\C P^\infty;\Q)$. Applying $ch$ to both sides of the equality
\eqref{eq:pw} and replacing the left-hand side by the right-hand side of
\eqref{eq:chpw} we obtain
$$
y^n + e_C(\alpha) y^{n+k} = z^n\sum_{j=0}^{k} w_jz^j,
$$
or equivalently
$$
\left( \frac{y}{z} \right)^n = \left(\sum_{j=0}^{k} w_jz^j \right)
(1-e_C(\alpha)y^k+e_C(\alpha)^2y^{2k}-\dots),
$$
Recall that this equality holds whenever $n$ is divisible by $M_k$. Replace $y$ by the corresponding power series in $z$. This is possible because $y=z+\text{higher powers of }z$.
\begin{equation}\tag{***}\label{eq:u_k}
\left( \frac{\log (1+z)}{z} \right)^n = \left(\sum_{j=0}^{k} w_jz^j \right)
(1-e_C(\alpha)z^k+\dots).
\end{equation}
It follows that on the right-hand side of
\eqref{eq:u_k} the first $k$ coefficients of $z^j$ (from $j=0$ to $k-1$) are
integers, and the coefficient of $z^k$ is $-e_C(\alpha)$ modulo $\Z$. By
definition, if $n=M_k$, then the coefficient of $z^k$ on the left-hand side
is $u_k$. If $n$ is divisible by $M_{k+1}$, then the same argument can be
repeated with $k+1$ instead of $k$ and we obtain that the coefficient of
$z^k$ has to be an integer; this means that $e_C(d^k\iota_s) = e_C(\alpha)$ is
$0$ modulo $\Z$. In general, when $n=tM_k$, the left-hand side of
\eqref{eq:u_k} can be expanded as
$$
\left( \frac{\log (1+z)}{z} \right)^n = \left(\left( \frac{\log (1+z)}{z}
\right)^{M_k} \right)^t
$$
using the multinomial theorem. Since the coefficients of $z^j$ in $\left( \frac{\log (1+z)}{z} \right)^{M_k}$ for $j=0,\dots,k-1$ are integers, the same is true for the $t$-th power, and the coefficient of $z^k$ is $tu_k$ modulo $\Z$. On the right-hand side of \eqref{eq:u_k} we see that the coefficient of $z^k$ is $-e_C(\alpha)$ modulo $\Z$, hence $e_C(\alpha) \equiv -tu_k \mod 1$ as claimed. {\hfill\qed}

\par

For example, the first several values of $u_k$ are as follows:
\par
\begin{tabular}{c|c|c|c|c|c|c|c|c|c}
k & 1 & 2 & 3 & 4 & 5 & 6 & 7 & 8 & 9 \\
\cline{1-10}
$M_k$ & 1 & 2 & 24 & 24 & 2880 & 2880 & 362880 & 362880 & 29030400\\
\cline{1-10}
$u_k$ & 1/2 & 11/12 & 0 & 71/120 & 0 & 61/126 & 0 & 17/80 & 0\\
\end{tabular}

\par

{\bf Remark:} Note that the generator $\iota_s$ of the group $\pi^{\bf s}_{2s}(\C P^s, \C P^{s-1}) \cong \Z$ in Mosher's spectral sequence corresponds in the singularity spectral sequence to the cobordism class of a prim map $\sigma_{s-1}: (D^{2s-2},\S^{2s-1}) \to (D^{2s-1},\S^{2s-2})$ that has an isolated $\Sigma^{1_{s-1}}$-point at the origin (given by Morin's normal form of the $\Sigma^{1_{s-1}}$ singularity). Its image $d^k(\iota_s)$ under the first nontrivial differential is represented by the submanifold of $\Sigma^{1_{s-k-1}}$-points of the ``boundary map'' $\partial \sigma_{s-1} : \S^{2s-3} \to \S^{2s-2}$ after eliminating all its higher (than $\Sigma^{1_{s-k-1}}$) singularity strata. The elimination of the higher strata proceeds in several steps and in this process we have to make several choices. First we eliminate the $\Sigma^{1_{s-2}}$ singularities by choosing a cobordism of prim $\Sigma^{1_{s-2}}$-maps that joins $\partial \sigma_{s-1}$ with a $\Sigma^{1_{s-3}}$-map $\partial_1\sigma_{s-1}$. Such a cobordism exists because $d^1(\iota_s)=0$, hence the immersed $\Sigma^{1_{s-2}}$-stratum is null-cobordant, and by \cite{keyfibration} any null-cobordism of this top stratum extends to a cobordism of the entire map $\partial \sigma_{s-1}$. Then we eliminate the $\Sigma^{1_{s-3}}$ singularities by choosing a cobordism of prim $\Sigma^{1_{s-3}}$-maps that joins $\partial_1 \sigma_{s-1}$ with a $\Sigma^{1_{s-4}}$-map, and so on. Finally we obtain a prim $\Sigma^{1_{s-k-1}}$-map. Its $\Sigma^{1_{s-k-1}}$-stratum represents an element $\alpha$ in $\pi^{\bf s}(2k-1)$. For some of these choices the class $\alpha$ (representing $d^k(\iota_s)$) belongs to $\img J$.


\par

As a corollary of the computation of $e_C(d^k(\iota_s))$, we have obtained the surprising fact that whichever representative $\alpha \in \img J \subset E^1_{**} =\pi^{\bf s}(2k-1)$ of the element $d^k(\iota_s)$ we choose, the value $e_C(\alpha)$ is the same. We propose the following explanation of this fact. Consider the following diagram (part of \cite[8.1.]{Mosher}):
$$
\xymatrix{
J(\C P^k) \ar[d]_{\cong}& J(\S^{2k}) \ar[l]_{p_J} \ar[d]_{\theta'_C}\\
J'_C(\C P^k) & J'_C(\S^{2k}) \ar@{>->}[l] \\
}
$$
Here $J'_C(X)$ is a quotient group of $J(X)$ defined in an algebraic way (for the reader's convenience we sketch the definition in the Appendix). Mosher writes: ``elements of $J'_C(\S^{2k})$ are measured by the invariant $e_C$'' and refers to \cite{AdamsJ4}.
This means that $e_C$ is a well-defined and injective map from $J'_C(\S^{2k})$ to $\Q/\Z$. Using this we show that all the representatives of $d^k(\iota_s)$ that belong to $\img J \subset E^1_{**}$ are mapped by $e_C$ into the same element of the group $\Q/\Z$. Indeed, the diagram implies that $\ker p_J = \ker \theta'_C$. The argument of Lemma \ref{lemma:indeterminateJ} shows that $\ker p_J$ is precisely the indeterminancy of the elements of $J(\S^{2k}) \subset E^1_{**}$ in $E^k_{**}$ (that is, the representatives in $\img J \subset E^1_{**}$ of an element of $E^k_{**}$ form a coset of the subgroup $\ker p_J$). Hence all the representatives of $d^k(\iota_s)$ that belong to $\img J=J(\S^{2k})$ will be mapped into the same element in $J'_C(\S^{2k})$ (namely into the unique preimage of $J(n\gamma_k)$ in $J(\C P^k)$) and only elements that represent $d^k(\iota_s)$ will be mapped there.

\par

To complete the explanation we need one more lemma. Recall (see \cite[Chapter 15, Remark 5.3.]{Husemoller}) that there exists a classifying space $BH$ for the semigroup $H$ of degree $1$ self-maps of spheres, and for any $X$ one has
$$
[X,BH]=\tilde K_{top}(X).
$$
Here $\tilde K_{top}(X)$ is the group of stable topological sphere bundles over $X$ up to fiberwise homotopy equivalence. Note that $\tilde K_{top}(\S^r)=\pi^{\bf s}(r-1) = \lim_{q \to \infty} \pi_{q+r-1}(\S^q)$. Furthermore let $n$ be again any sufficiently big natural number with the property that $n+s+1$ is divisible by $M_{k+1}$.

\begin{lemma}\label{lemma:indeterminateJ}
Consider the map $p^*: \tilde K_{top}(\S^{2k}) \to \tilde K_{top}(\C P^k)$ induced by the projection $p: \C P^k \to \C P^k/\C P^{k-1} = \S^{2k}$. Let us consider the sphere bundle $S(n\gamma_k)$ as an element in $\tilde K_{top}(\C P^k)$. Then $(p^*)^{-1}(S(n\gamma_k))$ is precisely the set of elements in $\tilde K_{top}(\S^{2k})=\pi^{\bf s}(2k-1) = E^1_{s-k,s-k+1}$ that represent $d^k(\iota_s) \in E^k_{s-k,s-k+1}$.
\end{lemma}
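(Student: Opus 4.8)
The plan is to realise both sets whose equality is asserted — $(p^*)^{-1}(S(n\gamma_k))$ on the one hand, and the set of elements of $\tilde K_{top}(\S^{2k})=\pi^{\bf s}(2k-1)=E^1_{s-k,s+k-1}$ representing $d^k(\iota_s)$ on the other — as cosets in $\pi^{\bf s}(2k-1)$ of the subgroups $\ker p^*$ and $B^k:=\langle\img d^i:i<k\rangle$ respectively ($B^k$ being the subgroup by which $E^1_{s-k,s+k-1}$ is divided to produce $E^k_{s-k,s+k-1}$), to exhibit a common element of the two cosets, and then to prove that the two subgroups coincide. The last point is the heart of the matter.

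For the common element I would reuse the real vector bundle $\xi$ over $\S^{2k}$ built in the proof of Theorem \ref{thm:imJ}. There one has $J(pr^*\xi)=J(n\gamma_k)$ in $J(\C P^k)$, so the sphere bundles $S(pr^*\xi)$ and $S(n\gamma_k)$ are stably fibrewise homotopy equivalent, i.e. $p^*[S\xi]=S(n\gamma_k)$ in $\tilde K_{top}(\C P^k)$; at the same time $[S\xi]\in\tilde K_{top}(\S^{2k})=\pi^{\bf s}(2k-1)$ is the clutching map of $S\xi$, which is the attaching map of the $2$-cellification $T\xi$ of $\C P^{n+k}/\C P^{n-1}$, and that proof shows this attaching map represents $d^k(\iota_s)$ (indeed it lies in $\img J$). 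Thus $[S\xi]$ lies in both sets. As $p^*$ is a homomorphism, $(p^*)^{-1}(S(n\gamma_k))=[S\xi]+\ker p^*$; and by Lemma \ref{lemma:differential} the representatives of $d^k(\iota_s)$ form the coset $[S\xi]+B^k$. Hence the lemma is equivalent to the equality $\ker p^*=B^k$ of subgroups of $\pi^{\bf s}(2k-1)$.

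To prove this equality I would identify both subgroups with the image of one and the same homomorphism $\phi^*\colon\pi_s^0(\C P^{k-1})\to\pi_s^0(\S^{2k-1})=\pi^{\bf s}(2k-1)$, where $\pi_s^0$ denotes reduced stable cohomotopy in degree $0$ and $\phi\colon\S^{2k-1}\to\C P^{k-1}$ is the attaching map of the top cell of $\C P^k$. For $\ker p^*$: applying $[{-},BH]=\tilde K_{top}$ to the cofibration $\C P^{k-1}\hookrightarrow\C P^k\xrightarrow{p}\S^{2k}$, the Puppe sequence presents $\ker p^*$ as the image of the connecting map out of $[\Sigma\C P^{k-1},BH]=[\C P^{k-1},\Omega BH]=[\C P^{k-1},H]$; since the monoid $H$ of degree-one self-maps of spheres is homotopy equivalent as a space to the basepoint component $Q_0S^0$ of $QS^0$ (translate by a fixed degree-one map — an equivalence natural for maps of spaces, as it carries constants to constants), this group is $\pi_s^0(\C P^{k-1})$ and the connecting map is precisely $\phi^*$. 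For $B^k=\langle\img d^i:i<k\rangle$: using the standard description $B^k=\img\big(\partial\colon\pi^{\bf s}_{2s}(\C P^{s-1}/\C P^{s-k})\to\pi^{\bf s}_{2s-1}(\C P^{s-k}/\C P^{s-k-1})\big)$, with $\partial$ the connecting map of the cofibration $\S^{2(s-k)}\hookrightarrow\C P^{s-1}/\C P^{s-k-1}\to\C P^{s-1}/\C P^{s-k}$, I would take $S$-duals: by Lemma \ref{lemma:restriction} (to identify the duals of the subquotients) together with the duality $D_S[\C P^s/\C P^{s-k-1}]=\C P^{n+k}/\C P^{n-1}$ (valid since $n+s+1\equiv0\bmod M_{k+1}$), this cofibration becomes $\C P^{n+k-1}/\C P^n\hookrightarrow\C P^{n+k}/\C P^n\xrightarrow{q}\S^{2(n+k)}$ and $\partial$ becomes the corresponding coboundary in stable cohomotopy, so $B^k\cong\ker\big(q^*\colon\pi_s^{\,2n+1}(\S^{2(n+k)})\to\pi_s^{\,2n+1}(\C P^{n+k}/\C P^n)\big)$. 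Finally, since $M_k\mid n$ we have $J(n\gamma_{k-1})=0$, whence $\C P^{n+k}/\C P^n=T((n{+}1)\gamma_{k-1})$ is stably equivalent to $\Sigma^{2n}\C P^k$ by an equivalence covering $\mathrm{id}_{\C P^{k-1}}$ and thus respecting the cell filtration inherited from $\C P^{k-1}$, under which $q$ corresponds to $\Sigma^{2n}p$; desuspending, $B^k=\ker\big(p^*\colon\pi_s^1(\S^{2k})\to\pi_s^1(\C P^k)\big)$, which by the Puppe sequence once more is $\img\phi^*$. Therefore $\ker p^*=B^k$, and the lemma follows.

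The main obstacle is precisely this last step, where two a priori unrelated cohomology theories intervene: the functor $\tilde K_{top}=[{-},BH]$ in which $p^*$ naturally lives, and stable cohomotopy, in which the spectral-sequence indeterminacy $B^k$ naturally sits after $S$-duality. One must check that they cut out the same subgroup of $\pi^{\bf s}(2k-1)$; this works because $\ker p^*$ appears as the image of a map out of the suspension $\Sigma\C P^{k-1}$, on which $[{-},BH]$ collapses to $[{-},H]$ and hence to stable cohomotopy, and because in both settings the relevant connecting homomorphism is precomposition with the single map $\phi$, the attaching map of the top cell of $\C P^k$. Care is also needed with the space-level (as opposed to infinite-loop-level) identification $H\simeq Q_0S^0$ and with the bookkeeping of suspension indices under $S$-duality.
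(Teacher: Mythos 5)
Your argument is correct in substance but follows a genuinely different route from the paper's. The paper proves the lemma by a direct bijection of choices: representatives of $d^k(\iota_s)$ in $E^1$ correspond (via Lemma \ref{lemma:differential} and the $2$-cellifications of $T(n\gamma_k)$) to retractions of $T(n\gamma_{k-1}) = \C P^{n+k-1}/\C P^{n-1}$ onto the fibre $\S^{2n}$; these are reinterpreted as stable fibrewise homotopy trivializations of $S(n\gamma_{k-1})$, i.e.\ as extensions over $\C P^k \cup Cone(\C P^{k-1}) \simeq \S^{2k}$ of a classifying map $\kappa: \C P^k \to BH$ of $S(n\gamma_k)$, and such extensions are exactly the elements of $(p^*)^{-1}(S(n\gamma_k))$ --- no explicit description of the indeterminacy subgroup and no appeal to Theorem \ref{thm:imJ} is needed. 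You instead compare the two sets as cosets in $\pi^{\bf s}(2k-1)$: the bundle $\xi$ from the proof of Theorem \ref{thm:imJ} furnishes a common element (its sphere-bundle class is the attaching map of $T\xi$, a representative of $d^k(\iota_s)$, and pulls back to $S(n\gamma_k)$), and you then prove $\ker p^* = \langle \img d^i : i<k\rangle$ by exhibiting both as the image of $\phi^*: \pi_s^0(\C P^{k-1}) \to \pi^{\bf s}(2k-1)$, where $\phi$ is the attaching map of the top cell of $\C P^k$ --- once via the Puppe sequence of $[-,BH]$ together with $\Omega BH \simeq H \simeq Q_0S^0$, and once via $S$-duality (Lemma \ref{lemma:restriction}), the $J$-triviality of $n\gamma_{k-1}$, and the Puppe sequence in stable cohomotopy. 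What your route buys is an explicit identification of the spectral-sequence indeterminacy with $\ker(p^*)$ as a subgroup of $\pi^{\bf s}(2k-1)$, which the paper leaves implicit and which also illuminates the Corollary about $\ker p_J$; what it costs is dependence on Theorem \ref{thm:imJ} for the common element and a chain of standard compatibilities (clutching class of $S\xi$ equals the stable attaching map of $T\xi$, naturality of the translation $H \simeq Q_0S^0$, suspension and sign bookkeeping under $S$-duality), which you correctly flag and which affect the argument at most by signs, hence neither the equality of the subgroups nor of the cosets.
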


\begin{proof}
The representatives of $d^k(\iota_s)$ in $E^1_{s-k,s-k+1}$ correspond to the possible choices of deformations of the attaching map of the top cell in $\C P^{n+k}/\C P^{n-1}$ into the bottom cell $\S^{2n}$ (see Lemma \ref{lemma:differential}). We have seen that this is the same as the set of $2$-cellifications of $T(n\gamma_k)$, and this in turn is the same as the choices of a retraction of $\C P^{n+k-1}/\C P^{n-1}= T(n\gamma_{k-1})$ onto the fiber $\S^{2n}$. Such a retraction of $T(n\gamma_{k-1})=S\left(n\gamma_{k-1} \oplus \varepsilon^1\right)/S(\varepsilon^1)$ lifts uniquely to a retraction of the sphere bundle $S\left(n\gamma_{k-1} \oplus \varepsilon^1\right)$, where $\varepsilon^1$ is the trivial real line bundle. This latter retraction can be reinterpreted as a fiberwise homotopy equivalence between the sphere bundles of $n\gamma_{k-1}\oplus\varepsilon^1$ and the trivial bundle $\varepsilon^{2n+1}$; we can therefore consider it to be a topological trivialization (in $\tilde K_{top}(\C P^{k-1})$) of the sphere bundle $S(n\gamma_{k-1})$.
\par
In short, the representatives of $d^k(\iota_s)$ in $E^1_{s-k,s-k+1}$ are in bijection with the topological trivializations of $S(n\gamma_{k-1})$.
\par
Consider the space $\C P^k \cup Cone(\C P^{k-1})$, the two spaces being glued along $\C P^{k-1}$. This space is homotopically equivalent to $\C P^k/\C P^{k-1}=\S^{2k}$ and the inclusion of $\C P^k$ into it is (homotopically) the standard projection of $\C P^k$ onto $\S^{2k}$. Take the element $S(n\gamma_k) \in \tilde K_{top}(\C P^k)$; it corresponds to a homotopy class of maps $\C P^k \to BH$, let $\kappa$ denote one map in this class. The extensions of $\kappa$ to the cone over $\C P^{k-1}$ correspond to topological trivializations of the sphere bundle $S(n\gamma_{k-1})$. On the other hand, these extensions correspond to choosing a preimage of $S(n\gamma_k) \in \tilde K_{top}(\C P^k)$ under the map $p^*$, and we obtain that the choice of representative of $d^k(\iota_s)$ in $E^1_{s-k,s-k+1}$ corresponds to the choice of an element in $(p^*)^{-1}(S(n\gamma_k))$, as claimed.
\end{proof}

{\bf Corollary:} The set of those representatives of $d^k(\iota_s)$ that belong to $\img J = J(\S^{2k})$ is in bijection with the set $(p^*)^{-1}(J(n\gamma_k)) = p_J^{-1}(J(n\gamma_k))$.

\begin{proof}
From Theorem \ref{thm:imJ} we know that $d^k(\iota_s)$ does have representatives in $\img J$ . By restricting the map $p^* :  \tilde K_{top}(\S^{2k}) \to \tilde K_{top}(\C P^k)$ to the elements that can be deformed into $BO \subset BH$, we obtain the map $p_J: J(\S^{2k}) \to J(\C P^k)$.
\end{proof}
In particular, since $\ker p_J$ has size at most $2$, this means that the images of the previous differentials going to $E^*_{s-k,s-k+1}$ intersect $\img J$ in a subgroup of order at most $2$ (see property \eqref{eReC} and footnote to property \eqref{footnoted} of $e$-invariants).

\par

{\bf Remark:} The entire calculation can also be performed for the spectral sequence formed by the $p$-components of the groups of the Mosher spectral sequence. Theorem \ref{thm:imJ_p} proves that the image of the first non-trivial differential from the diagonal of this spectral sequence belongs to the image of $\img J_p$. Indeed, tracing the diagram of Theorem \ref{thm:imJ_p}, if we calculate $e_C(\alpha)$ for the attaching map $\alpha$ of the $2$-cell ``resolutions'' $A$ and $B$, then dividing it back by the degree $q$ we obtain a well-defined value in the $p$-component $\left(\img e_C\right)_p$. The only difference in the actual computation is that the map $\C P^{n+k}/\C P^{n-1} \to B$ is no longer an isomorphism in homology, but induces the multiplication by $q$ on $H^{2n}$. Hence we have to replace $e_C(\alpha)$ throughout the proof with $qe_C(\alpha)$, and in the end we divide the resulting value by $q$ to obtain the $e_C(\alpha)$ in the $p$-component (the division by $q$ may not be meaningful in general, but in the $p$-component it is).
\par
Alternatively, we can choose a $q$ coprime to $p$ such that $d^k(q\iota_s)$ makes sense, then calculate $d^k(\iota_{qs})$ (which makes sense) and divide back by $q$ in the $p$-localization to arrive at $d^k(\iota_s)$ (which only makes sense as ${}^pd^k(\iota_s)$ after localization).

\section{Geometric corollaries}

Here we summarize a few geometric corollaries.

\subsection{Translation of the results to singularities}\label{subsection:geometric}
The first corollary is just a singularity theoretical reinterpretation of the homotopy theoretical results and it answers the following question:
\par
Given non-negative integers $n$, $r_1$ and $r_2$ as well as two stems $\alpha \in \pi^{\bf s}(n-2r_1)$ and $\beta \in \pi^{\bf s}(n-2r_2-1)$, does there exist a prim $\Sigma^{1_{r_1-1}}$-map $f : M^n \to \R^{n+1}$ of a compact $n$-manifold $M$ with boundary $\partial M$ such that $f|_{\partial M}$ is a $\Sigma^{1_{r_2-1}}$-map, the set $\Sigma^{1_{r_1-1}}(f)$ (with its natural framing) represents $\alpha$, while $\Sigma^{1_{r_2-1}}(f|_{\partial M})$ (with the natural framing) represents $\beta$?
\par
Answer: Define $k$ to be the greatest natural number for which $M_k$ divides $r_1+1$ (equivalently, $d^j\iota_{r_1}=0$ for all $j=0$, $\dots$, $k-1$ and $d^k\iota_{r_1} \neq 0$).
\begin{enumerate}[a)]
\item
If $r_2 \geq r_1-k$, then such a map $f$ exists exactly if $\beta - \alpha \cdot d^{r_1-r_2}(\iota_{r_1})$ belongs to the image of the ``lower'' differentials $d^j_{r_2+j,n-r_2-j}$ with $j=1$, $\dots$, $r_1-r_2$.
\item
In general, with $r_1$ and $r_2$ arbitrary, the same condition takes the form $\beta = d^{r_1-r_2} \alpha$ in $E^{r_1-r_2}_{r_1,n-r_1}$ with the additional requirement that the differential has to be defined on $\alpha$. In particular,  when $r_2 < r_1-k$ and we make the additional assumption that $\alpha=\lambda\hat\alpha$ for some $\lambda \in \Z$ for which $d^{r_1-r_2}(\lambda\iota_{r_1})$ is defined, $\beta$ has to be equal to $\hat\alpha \cdot d^{r_1-r_2}(\lambda\iota_{r_1})$ modulo the image of the lower differentials.
\end{enumerate}
\par
For example, if $r_2=r_1-1$, then criterion $a)$ states that such a map $f$ exists exactly if
$$
\beta=\alpha\cdot d^1(\iota_{r_1}) = \begin{cases}
0 & \text{ if } r_1 \text{ is even,}\cr
\alpha\eta & \text{ if } r_1 \text{ is odd,}
\end{cases}
$$
where $\eta \in \pi^{\bf s}(1)$ is the generator.
\par
If $r_2=r_1-2$, we can apply criterion $b)$. When $r_1$ is even, then $\beta$ must lie in the coset $\alpha \cdot d^2(\iota_{r_1}) + \eta \cdot \pi^{\bf s}(n-2r_2-2)$. When $r_1$ is odd and we additionally assume that $\alpha=2\hat\alpha$, then $\beta$ must be $\hat\alpha \cdot d^2(2\iota_{r_1})$.

\subsection{The $p$-localization of the classifying space}

Recall that $X^r = X_{Prim\Sigma^{1_r}}$ denotes the classifying space of prim $\Sigma^{1_r}$-maps of cooriented manifolds. Let $p$ be any prime such that $p \geq r+1$. For any space $Y$ we denote by $(Y)_p$ the $p$-localization of $Y$. Recall that $\Gamma$ stands for $\Omega^\infty S^\infty$.

\begin{thm}
$(X^r)_p \cong \prod_{i=0}^r\left(\Gamma \S^{2i+1} \right)_p$.
\end{thm}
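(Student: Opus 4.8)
By Lemma~\ref{lemma:main} we have $X^r\simeq\Omega\Gamma\C P^{r+1}$, so the assertion is really a statement about $(\Omega\Gamma\C P^{r+1})_p$. The plan is to pass to spectra. With $\Gamma=\Omega^\infty S^\infty$ and $S^\infty\C P^{r+1}$ the reduced suspension spectrum (with one cell in each of the dimensions $2,4,\dots,2(r+1)$), one has an identification of infinite loop spaces
$$
\Omega\Gamma\C P^{r+1}=\Omega\,\Omega^\infty S^\infty\C P^{r+1}=\Omega^\infty\bigl(\Sigma^{-1}S^\infty\C P^{r+1}\bigr).
$$
Hence the theorem reduces to the claim that after $p$-localization the spectrum $S^\infty\C P^{r+1}$ splits as a wedge of suspension spectra of even spheres, $\bigl(S^\infty\C P^{r+1}\bigr)_p\simeq\bigvee_{i=1}^{r+1}S^\infty\S^{2i}$. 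Granting this, since a finite wedge of spectra is also their product, since $\Omega^\infty$ preserves products and commutes with $p$-localization on connective spectra, and since $\Omega^\infty S^\infty\S^{2i-1}=\Gamma\S^{2i-1}$, applying $\Omega^\infty$ to $\Sigma^{-1}$ of this splitting yields
$$
(\Omega\Gamma\C P^{r+1})_p\simeq\prod_{i=1}^{r+1}\bigl(\Gamma\S^{2i-1}\bigr)_p=\prod_{i=0}^{r}\bigl(\Gamma\S^{2i+1}\bigr)_p ,
$$
which is the desired conclusion.

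It remains to produce the $p$-local splitting of $\C P^{r+1}$, and this is where the hypothesis $p\ge r+1$ is used. I would argue by induction up the skeleta. The $2$-skeleton is $\S^2$; suppose the $2(j-1)$-skeleton of $\C P^{r+1}$ is already $p$-locally equivalent to $\bigvee_{i=1}^{j-1}\S^{2i}$. Then the (stable) attaching map of the unique $2j$-cell is an element of $\bigoplus_{i=1}^{j-1}\bigl(\pi^{\bf S}(2j-2i-1)\otimes\Z_{(p)}\bigr)$. Every degree $2j-2i-1$ that occurs here (for $1\le i<j\le r+1$) lies below the first dimension $2p-3$ in which the stable stems carry $p$-torsion, so each of these groups vanishes, the attaching map is $p$-locally null, and the $2j$-cell splits off a sphere. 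Iterating up to $j=r+1$ exhibits $(\C P^{r+1})_p$ as $\bigvee_{i=1}^{r+1}\S^{2i}$. Equivalently one may run the argument cohomologically: the only primary Steenrod operation that could act nontrivially on $\tilde H^{*}(\C P^{r+1};\Z_p)$ is $P^{1}$, which raises degrees by $2(p-1)$ and, in the range in question, annihilates $\tilde H^{*}(\C P^{r+1};\Z_p)$ entirely, so $\C P^{r+1}$ is $p$-locally stably trivial and therefore a wedge of spheres.

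The main obstacle is precisely this numerical check: one must verify that the whole collection of attaching-map degrees of $\C P^{r+1}$ — all numbers of the form (cell dimension $-$ cell dimension $-1$), the largest of which is $2r-1$ — stays inside the range where $\pi^{\bf S}_{*}\otimes\Z_{(p)}$ vanishes in positive degrees, i.e. strictly below the dimension $2p-3$ of the first $p$-torsion class $\alpha_1\in\pi^{\bf S}(2p-3)$ (the one detected by $P^1$). Once this range condition is established for the primes allowed by the hypothesis, everything else in the argument — the passage to spectra, the identity $\Omega^\infty(A\vee B)\simeq\Omega^\infty A\times\Omega^\infty B$ for finite wedges of spectra, and the interchange of $\Omega^\infty$ with $p$-localization on connective spectra — is formal. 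The same induction can also be packaged geometrically through the key fibration $\Omega\Gamma\C P^r\to\Omega\Gamma\C P^{r+1}\to\Gamma\S^{2r+1}$ obtained by applying $\Omega\Gamma$ to the cofibration $\C P^r\subset\C P^{r+1}\to\S^{2r+2}$: since the projection is an infinite loop map, the $p$-local fibre-homotopy triviality of the fibration is equivalent to the $p$-local nullity of the single connecting map $S^\infty\S^{2r+1}\to S^\infty\C P^r$, which is once more controlled by the same range condition.
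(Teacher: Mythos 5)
Your plan is essentially the paper's own proof: reduce via Lemma \ref{lemma:main} to $\Omega\Gamma\C P^{r+1}$, split $\C P^{r+1}$ stably and $p$-locally into $\S^2\vee\S^4\vee\dots\vee\S^{2r+2}$ by the Serre vanishing $\pi^{\bf S}(m)_{(p)}=0$ for $0<m<2p-3$ (so all stable attaching maps become null after $p$-localization), and then turn the wedge into a product of the spaces $\Gamma\S^{2i+1}$ by the formal properties of $\Gamma$ and localization. No genuinely different idea is involved.

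There is, however, one concrete problem, located exactly at the step you yourself single out as the main obstacle. Under the stated hypothesis $p\geq r+1$ your range check fails in the boundary case $p=r+1$: the largest attaching degree is $2r-1$, which then \emph{equals} $2p-3$, the dimension of $\alpha_1$, so the relevant stable stem is not $p$-torsion free. The obstruction is real: $P^1y=y^p\neq 0$ in $H^*(\C P^{r+1};\Z_p)$ when $p\leq r+1$, so the $p$-component of the top attaching map on the bottom cell is a unit multiple of $\alpha_1$ and $\C P^{r+1}$ does not split $p$-locally; correspondingly the asserted homeomorphism type of $(X^r)_p$ fails (e.g.\ for $r=2$, $p=3$ one has $\pi^{\bf S}_5(\C P^3)_{(3)}=0$, whereas $\pi_4\bigl(\prod_{i=0}^{2}\Gamma\S^{2i+1}\bigr)_{(3)}\cong\Z_3$ coming from $\pi^{\bf S}(3)$). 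So both your cellular induction and your cohomological variant (``$P^1$ annihilates $\tilde H^*$'') require the sharp condition $p>r+1$; note the paper itself is inconsistent here, stating the theorem for $p\geq r+1$ while its auxiliary lemma assumes $p>n+1$ with $n=r+1$. With the hypothesis tightened to $p>r+1$ your argument is complete and coincides with the paper's.
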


{\bf Remark:} Recall (Section \ref{section:prim}) that $Prim\Sigma^{1_r}(n)$ denotes the cobordism group of prim $\Sigma^{1_r}$-maps of oriented $n$-manifolds into $\R^{n+1}$. Let $\mathcal C_r = \mathcal C(p\leq r+1)$ denote the Serre class of finite abelian groups that have no $p$-primary components for $p>r+1$. We will denote isomorphism modulo $\mathcal C_r$ by $\underset{\mathcal C_r}{\approx}$. Then
$$
Prim\Sigma^{1_r}(n) \underset{\mathcal C_r}{\approx} \oplus_{i=0}^{r} \pi^{\bf s}(n-2i).
$$
This means prim $\Sigma^{1_r}$-maps considered up to cobordism and modulo small primes look as an independent collection of immersed framed manifolds of dimensions $n$, $n-2$, $\dots$, $n-2r$ corresponding to their singular strata.

{\it Proof:} We have seen in Lemma \ref{lemma:main} that $X^r \cong \Omega \Gamma \C P^{r+1}$. Now we use a lemma about the $p$-localization of $\C P^n$:

\begin{lemma} \footnote{We thank D. Crowley for the proof of this lemma.}
For any $p>n+1$ the $p$-localizations of $\C P^n$ and $\S^2 \vee \S^4 \vee \dots \vee \S^{2n}$ are stably homotopically equivalent.
\end{lemma}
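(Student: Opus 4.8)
The plan is to prove the lemma by induction on $n$, peeling off the top cell of $\C P^n$ one at a time; the crux is that after $p$-localization the stable attaching map of that cell is forced to vanish, because the stable stems carry no $p$-torsion in the relevant range of degrees. Recall that $\C P^n=\C P^{n-1}\cup_{f_n}e^{2n}$, where $f_n\colon\S^{2n-1}\to\C P^{n-1}$ is the attaching map of the top cell, and that $\pi^{\bf s}_{*}$ of a finite wedge of spheres is the direct sum of the stems $\pi^{\bf s}(*-2i)$ contributed by the summands $\S^{2i}$.

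The arithmetic input I would quote is the classical theorem of Serre (equivalently, an inspection of the Adams spectral sequence): for any prime $p$ the stem $\pi^{\bf s}(j)$ is finite for $j>0$, and its $p$-primary part vanishes for $0<j<2p-3$, the first $p$-torsion class $\alpha_1$ sitting in $\pi^{\bf s}(2p-3)$. Thus $\pi^{\bf s}(j)\otimes\Z_{(p)}=0$ whenever $0<j<2p-3$. The case $n=0$ of the lemma is vacuous, and for $n\geq 1$ the hypothesis $p>n+1$ forces $p\geq 3$ and, more importantly, $2n-3<2p-3$.

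For the induction itself, the base case $n=1$ is trivial since $\C P^1=\S^2$. Assume $(\C P^{n-1})_p$ is stably equivalent to $(\S^2\vee\dots\vee\S^{2n-2})_p$ and work in the $p$-local stable homotopy category. Then, using that $\pi^{\bf s}_{*}$ commutes with $p$-localization of finite complexes,
$$
\pi^{\bf s}_{2n-1}\bigl((\C P^{n-1})_p\bigr)\;\cong\;\bigoplus_{i=1}^{n-1}\pi^{\bf s}_{2n-1}(\S^{2i})\otimes\Z_{(p)}\;\cong\;\bigoplus_{i=1}^{n-1}\pi^{\bf s}(2n-1-2i)\otimes\Z_{(p)},
$$
a sum of $p$-localized stems in the odd degrees $1,3,\dots,2n-3$, all strictly less than $2p-3$; hence this group is $0$ and the $p$-localized stable class of $f_n$ vanishes. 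The connecting map $\S^{2n}_p\to\Sigma(\C P^{n-1})_p$ of the cofibre sequence $(\C P^{n-1})_p\to(\C P^n)_p\to\S^{2n}_p$ is the suspension of $f_n$, hence also null, so the sequence splits stably and $(\C P^n)_p$ is stably equivalent to $(\C P^{n-1})_p\vee\S^{2n}_p$. Combined with the induction hypothesis this identifies $(\C P^n)_p$ stably with $(\S^2\vee\dots\vee\S^{2n})_p$, completing the induction.

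The only non-formal ingredient is the vanishing range for $p$-torsion in the stable stems; everything else is standard. One point worth spelling out in the writeup is why the peeling meets no secondary obstructions: in the stable (triangulated) homotopy category a cofibre sequence $A\to X\to\S^m$ splits precisely when the single connecting map $\S^m\to\Sigma A$ is null, so the sole obstruction to splitting off the top cell of $\C P^n$ is the stable homotopy class of $f_n$, which is exactly what the degree count above kills. I expect this bookkeeping — and checking the bound $2n-3<2p-3$ under the hypothesis $p>n+1$ — to be the main, and rather modest, thing to get right.
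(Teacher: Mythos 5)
Your proof is correct and follows essentially the same route as the paper: the paper's (one-line) argument also invokes Serre's theorem that $\pi^{\bf s}(j)$ has no $p$-torsion for $0<j<2p-3$ to conclude that all attaching maps of the cells of $\C P^n$ become null after $p$-localization, so the space splits stably as a wedge of even spheres. Your induction with the cofibre-sequence splitting simply makes explicit the bookkeeping the paper leaves implicit, and your degree check ($2n-3<2p-3$ under $p>n+1$) is the right one.
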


\begin{proof}
Serre's theorem states that the stable homotopy groups of spheres $\pi^{\bf s}(m)$ have no $p$-components if $m<2p-3$. Hence after $p$-localization the attaching maps of all the cells in $\C P^n$ will become null-homotopic.
\end{proof}

Hence we get that
$$
(X^r)_p \cong \Omega \Gamma\left( \S^2 \vee \dots \vee \S^{2r+2} \right)_p \cong \Gamma \left( \S^1 \vee \dots \vee \S^{2r+1} \right)_p \cong \prod_{i=0}^r\left(\Gamma \S^{2i+1} \right)_p,
$$
proving the theorem.

\subsection{Odd torsion generators of stable homotopy groups of spheres represented by the strata of isolated singularities}

Example: the odd generator of $\pi^{\bf s}(3) = \Z_{24}$. The isolated cusp map $\sigma_2: (\R^4,0) \to (\R^5,0)$ has on its boundary $\partial \sigma_2: \S^3 \to \S^4$ a framed null-cobordant fold curve. Applying a cobordism of prim fold maps to $\partial \sigma_2$ that eliminates the singularity curve one obtains a map without singularities of a $3$-manifold into $\S^4$. It represents a quadruple of the odd generator in $\pi^{\bf s}(3)$.
\par
Example: the odd torsion of $\pi^{\bf s}(7) = \Z_{15} \oplus \Z_{16}$. Consider an isolated $\Sigma^{1_4}$-map $f:(\R^8,0) \to (\R^9,0)$. Again all the singularity strata of its boundary map $\partial f:\S^7\to \S^8$ can be eliminated (after possibly a multiplication by a power of $2$). The obtained non-singular map of a $7$-manifold into $\S^8$ represents a generator of $\Z_{15}$.
\par
These examples can be produced in any desired amount.

\section{Equidimensional prim maps}

The arguments demonstrated so far can also be applied to the case of codimension $0$ prim maps, both cooriented and not necessarily cooriented. However, the resulting spectral sequences do not have the same richness of structure as Mosher's, so we only indicate the differences from the case of codimension $1$ cooriented prim maps.

\par

The analogue of Lemma \ref{lemma:filtration} that identifies the classifying space of codimension $0$ (not necessarily cooriented) prim maps with $\Omega\Gamma \R P^\infty$ including the natural filtrations goes through without significant changes, giving that the codimension $0$ classifying space $X_{Prim\_\Sigma^{1_r}}(0)$ is $\Omega \Gamma \R P^{r+1}$. Indeed, take a codimension $1$ immersion $f$ with a $\Sigma^{1_r}$ projection. The vertical vector field gives us sections of the relative normal line bundles of the singular strata, and collecting these sections induces the normal bundle of $f$ from the tautological line bundle over $\R P^r$. This defines a map from the classifying space of the lifts of prim $\Sigma^{1_r}$-maps to the Thom space $\R P^{r+1}$ of the tautological line bundle over $\R P^r$, and the $5$-lemma shows that this map is a weak homotopy equivalence. For cooriented maps the corresponding bundle from which the normal bundle of $f$ is induced is the trivial bundle $\varepsilon^1_{\S^r}$ over $\S^r$ and the classifying space $X^{SO}_{Prim\_\Sigma^{1_r}}(0)$ turns out to be $\Omega \Gamma T\varepsilon^1_{\S^r} \cong \Omega \Gamma (\S^{r+1} \vee \S^1)$.
\par
In the cooriented case, the spectral sequence obtained this way has first page $E^1_{p,q} = \pi^{\bf s}(q) \oplus \pi^{\bf s}(0)$ and degenerates to $\pi^{\bf s}(0)$ concentrated in the first column on page $E^2$. In the non-cooriented case, the spectral sequence has first page $E^1_{p,q} = \pi^{\bf s}(q)$ and abuts to $\pi^{\bf s}_*(\R P^\infty)$. The differential $d^1$ can be completely understood: it is induced by the attaching map of the top cell of $\R P^{q+1}$ to the top cell of $\R P^q$, and this attaching map has degree $0$ when $q$ is even and degree $2$ when $q$ is odd. Thus $d^1$ is also $0$ on the even columns and multiplication by $2$ on the odd columns. Consequently the $E^2$ page consists of direct sums of groups $\Z_2$, one for each $2$-primary direct summand in the same $E^1$-cell except at row $0$, where the groups are alternating $0$ and $\Z_2$ (this exception comes from the fact that the group $\pi^{\bf s}(0) = \Z$ is not finite like the rest of the groups $\pi^{\bf s}(q)$).
\par
Periodicity goes through in the same way as before, with $M_k$ replaced by $\vert J(\R P^{k-1}) \vert = 2^{m(k)}$ with $m(k)=\vert\{ 1 \leq p <k : p \equiv 0,1,2,3,4 \text{ mod } 8 \}\vert$ (known from \cite{AdamsJ2}).
\par
Conjecture: the first column of $E^2$ survives to $E^\infty$ without further change, in other words, the differentials $d^2$, $d^3$, $\dots$, ending in the first columns all vanish. This has been observed in the cells number $0$ to $8$ of the first column.

\section*{Appendix: The definition of $J'_C(X)$}

The famous $K$-theoretical $\psi^k$-operations ($k$ is any natural number) of Adams are defined by the properties of
\begin{enumerate}[a)]
\item being group homomorphisms, and
\item satisfying $\psi^k \xi = \xi^k$ if $\xi$ is a line bundle.
\end{enumerate}
Now if $\Phi_K$ is the $K$-theoretical Thom isomorphism, then an operation $\rho^k$ is defined by
$$
\rho^k (\xi) = \Phi^{-1}_K \psi^k \Phi_K (1)
$$
for any vector bundle $\xi$. After having extended $\psi^k$ and $\rho^k$ to virtual bundles one can define the subgroup $V(X) \leq K(X)$ as follows:
$$
V(X) = \left\{ x\in K(X) : \exists y\in K(X) \text{ s.t. } \rho^k(x)=\frac{\psi^k(1+y)}{1+y}\right\}.
$$
Then $J'_C(X) \overset{def}{=} K(X)/V(X)$.

\par

The definition is motivated by the result (proved by Adams) that every $J$-trivial element of $K(X)$ necessarily belongs to $V(X)$. Hence there is a surjection $pr: J(X) \to J'_C(X)$. While the definition of $J(X)$ is geometric (not algebraic) and consequently it is hard to handle, the definition of $J'_C(X)$ is purely algebraic and therefore much easier to compute. Furthermore, the two groups often coincide (eg. for $X=\R P^n$) or are close to each other (for $X=\S^{2k}$ the kernel $\ker pr$ has exponent $2$).

\newpage

\section*{Appendix 2: Calculated spectral sequences}

$E^1_{ij}$ page:
\[
\begin{array}{c|c|c|c|c|c|c|}
\cline{2-7}
 & & & & & & \\
11 & \pi^{\bf s}(10)=\Z_6\langle\eta\circ\mu\rangle\rule{2mm}{0pt} &\hspace*{-14mm}\overset{\eta\circ\mu}{\hbox to12mm{\leftarrowfill}} \ \Z_2^3 &
\hspace*{-9.5mm} \overset{0}{\hbox to12mm{\leftarrowfill}}\ \Z_2 \oplus \Z_2\rule{3.5mm}{0pt} & \hspace*{-7mm}  \overset{\bar \nu + \varepsilon}{\hbox to12mm{\leftarrowfill}}\  \Z_{240}\rule{3mm}{0pt} & \hspace*{-7.5mm} \overset{0}{\hbox to12mm{\leftarrowfill}} \ \Z_2 & 0 \\
  & & & & & & \\
 \cline{2-7}
 & &  & & & & \\
10 & \pi^{\bf s}(9)=\Z_2^3\langle\nu^3,\mu,\eta\circ\varepsilon\rangle  & \hspace*{-4mm} \overset{?}{\hbox to 6mm{\leftarrowfill}}\ \,\Z_2\oplus \Z_2\rule{3mm}{0pt} &
 \hspace*{-11.5mm}  \overset{0}{\hbox to12mm{\leftarrowfill}} \ \Z_{240}
 & \hspace*{-14.5mm} \overset{0}{\hbox to12mm{\leftarrowfill}}\  \Z_2 & 0 & 0 \\
 & & & & & & \\
\cline{2-7}
& & & & & & \\
9 & \pi^{\bf s}(8)=\Z_2\langle\overline\nu\rangle\oplus \Z_2\langle\varepsilon\rangle \rule{3mm}{0pt} & \hspace*{-7.5mm} \overset{\bar \nu + \varepsilon}{\hbox to10mm{\leftarrowfill}} \ \Z_{240}\rule{2mm}{0pt} &
\hspace*{-17.5mm}  \overset{0}{\hbox to12mm{\leftarrowfill}} \  \Z_2 & 0 & 0 & \Z_{24}\\
& & & & & & \\
\cline{2-7}
& & & & & & \\
8 & \pi^{\bf s}(7)=\Z_{240}\langle\sigma\rangle & \hspace*{-15.5mm} \overset{0}{\hbox to12mm{\leftarrowfill}}\ \Z_2 & 0 & 0 &
\Z_{24}\phantom{1} & \hspace*{-7.5mm} \hbox to12mm{\leftarrowfill}\kern-2.5pt\raisebox{.75pt}{$\scriptstyle\langle$} \ \hfil  \Z_2 \\
& & & & & & \\
\cline{2-7}
& & & & & & \\
7 & \pi^{\bf s}(6)=\Z_2\langle \nu^2 \rangle & 0 & 0 & \Z_{24} & \hspace*{-8.5mm}  \overset{0}{\hbox to12mm{\leftarrowfill}}
\  \Z_2 &  \hspace*{-4.5mm}  \overset{\cong}{\hbox to12mm{\leftarrowfill}} \ \Z_2\rule{5mm}{0pt} \\
& & & & & & \\
\cline{2-7}
& & & & & & \\
6 & \pi^{\bf s}(5)=0 & 0 & \Z_{24} & \hspace*{-12mm} \hbox to12mm{\leftarrowfill}\kern-2.5pt\raisebox{.75pt}{$\scriptstyle\langle$}  \ \Z_2\rule{3mm}{0pt} & \hspace*{-7.5mm} \overset{0}{\hbox to12mm{\leftarrowfill}}  \Z_2\rule{3mm}{0pt} & \hspace*{-9mm} \raisebox{.75pt}{$\scriptstyle\langle$}\kern-6pt\hbox to12mm{\leftarrowfill}\  \Z \\
& & & & & & \\
\cline{2-7}
& & & & & & \\
5 & \pi^{\bf s}(4)= 0 &  \Z_{24} & \hspace*{-12mm} \overset{0}{\hbox to12mm{\leftarrowfill}}\ \  \Z_2 &
  \hspace*{-12mm}  \overset{\cong}{\hbox to12mm{\leftarrowfill}}\ \Z_2 &
  \hspace*{-9mm} \overset{0}{\hbox to12mm{\leftarrowfill}}\  \Z & 0\\
 & & & & & & \\
 \cline{2-7}
  & & & & & & \\
4 & \pi^{\bf s}(3)= \Z_{24}\langle\nu\rangle & \hspace*{-14.5mm} \hbox to12mm{\leftarrowfill}\kern-2.5pt\raisebox{.75pt}{$\scriptstyle\langle$}\  \Z_2 &
\hspace*{-15mm} \overset{0}{\hbox to12mm{\leftarrowfill}}\ \ \Z_2 & \hspace*{-15.5mm}  \raisebox{.75pt}{$\scriptstyle\langle$}\kern-6pt\hbox to12mm{\leftarrowfill}\ \  \Z\rule{5mm}{0pt}  & 0 & 0\\
& & & & & & \\
\cline{2-7}
& & & & & & \\
3 & \pi^{\bf s}(2)= \Z_2\langle\eta^2\rangle  & \hspace*{-14.5mm} \overset{\cong}{\hbox to12mm{\leftarrowfill}}\ \  \Z_2 &
\hspace*{-15mm}  \overset{0}{\hbox to12mm{\leftarrowfill}}\ \Z\rule{5mm}{0pt} & 0 & 0 & 0\\
& & & & & & \\
\cline{2-7}
& & & & & & \\
2 & \pi^{\bf s}(1)= \Z_2\langle\eta\rangle  & \hspace*{-15mm}\raisebox{.75pt}{$\scriptstyle\langle$}\kern-6pt\hbox to12mm{\leftarrowfill}\ \ \Z & 0 & 0 & 0 & 0\\
& & & & & & \\
\cline{2-7}
& & & & & & \\
j=1 & \pi^{\bf s}_1(\S^1)= \Z & 0 & 0 & 0 & 0 & 0\\
& & & & & & \\
\cline{2-7}
\multicolumn{1}{c}{\rule{0pt}{20pt}} & \multicolumn{1}{c}{i=0} & \multicolumn{1}{c}{1} & \multicolumn{1}{c}{2} & \multicolumn{1}{c}{3} & \multicolumn{1}{c}{4} & \multicolumn{1}{c}{5}
\end{array}
\]

\newpage
\noindent

$E^2_{ij}$ page:

\vspace*{12pt}
\vbox{$$
\xymatrix{
8 & \Z_2 & \Z_{120} & \Z_2 & 0 & 0 & \Z_{24} \\
7 & \Z_{240} & \Z_2 & 0 & 0 & \Z_{12} & 0 \\
6 & \Z_2 & 0 & 0 & \Z_{24} & 0 & 0 \\
5 & 0 & 0 & \Z_{12} & 0 & 0 & \Z \ar[llu]_{d^2_{5,5}}\\
4 & 0 & \Z_{24} & 0 & 0 & \Z \ar[llu]_{d^2_{4,4}} & 0 \\
3 & \Z_{12}\langle 2\nu \rangle & 0 & 0 & \Z \ar[llu]_{d^2_{3,3}} & 0 & 0\\
2 & 0 & 0 & \Z \ar[llu]_{2 \cdot 2} & 0 & 0 & 0 \\
j=1 & 0 & \Z & 0 & 0 & 0 & 0 \\
& i=0 & 1 & 2 & 3 & 4 & 5 \\
}
$$

\vspace*{-121.0mm}
\[
\hspace*{12mm}\begin{array}{c|c|c|c|c|c|c|}
\cline{2-7}
 & & & & & & \\[.679pt]
 & \hspace*{20.0mm} & \hspace*{15.0mm}& \hspace*{12.5mm} & \hspace*{12.5mm} & \hspace*{12.5mm} & \hspace*{12.5mm} \\[.679pt]
 & & & & & & \\[.679pt]
\cline{2-7}
 & & & & & & \\[.679pt]
 & & & & & & \\[.679pt]
 & & & & & & \\[.679pt]
\cline{2-7}
 & & & & & & \\[.679pt]
 & & & & & & \\[.679pt]
& & & & & & \\[.679pt]
\cline{2-7}
& & & & & & \\[.679pt]
& & & & & & \\[.679pt]
 & & & & & & \\[.679pt]
\cline{2-7}
& & & & & & \\[.679pt]
& & & & & & \\[.679pt]
 & & & & & & \\[.679pt]
\cline{2-7}
 & & & & & & \\[.679pt]
 & & & & & & \\[.679pt]
 & & & & & & \\[.679pt]
\cline{2-7}
 & & & & & & \\[.679pt]
 & & & & & & \\[.679pt]
 & & & & & & \\[.679pt]
\cline{2-7}
 & & & & & & \\[.679pt]
 & & & & & & \\[.679pt]
 & & & & & & \\[.679pt]
\cline{2-7}
\multicolumn{7}{c}{}\\[.679pt]
\multicolumn{7}{c}{}\\[.679pt]
\multicolumn{7}{c}{}\\[.679pt]
\end{array}
\]}

Here, $d^2_{4,4}$ induces the $0$ map between the $3$-components, while the $d^2_{3,3}$ and $d^2_{5,5}$ differentials induce epimorphisms in the $3$-components.

\end{document}